\definecolor{darkgreen}{rgb}{0.0, 0.4, 0.0}
\newcommand*{\addFileDependency}[1]{
\typeout{(#1)}
%
%
\@addtofilelist{#1}
%
\IfFileExists{#1}{}{\typeout{No file #1.}}
}\makeatother
\newcommand*{\myexternaldocument}[1]{%
\externaldocument{#1}%
\addFileDependency{#1.tex}%
\addFileDependency{#1.aux}%
}
\newcommand{\Argmin}[1]{\underset{#1}{\mathrm{Argmin}}}
\def\HK{\HH_K}
\def\HEK{\HH_{\cE,K}}
\def\Hk{\HH_k}
\def\N{\mathbb{N}}
\def\bE{\mathbb{E}}
\def\bP{\mathbb{P}}
\def\R{\mathbb{R}}
\def\HH{\mathcal{H}} 
\def\L2mu{L^(\mu)} 
\def\cN{\mathcal{N}}
\def\SW{\mathcal{SW}}
\def\cPO{\mathcal{P}(\Omega)}
\def\cS{\mathcal{S}}
\def\cU{\mathcal{U}}
\def\cC{\mathcal{C}}
\def\cA{\mathcal{A}}
\def\cE{\mathcal{E}}
\def\cB{\mathcal{B}}
\def\cA{\mathcal{A}}
\def\cL{\mathcal{L}}
\def\cK{\mathcal{K}}
\def\cO{\mathcal{O}}
\def\cte{\mathrm{cst}}
\def\tell{\tilde{\ell}}
\newtheorem{theorem}{Theorem}[section]
\newtheorem{condition}[theorem]{Condition}
\newtheorem{lemma}[theorem]{Lemma}
\newtheorem{corollary}[theorem]{Corollary}
\newtheorem{remark}[theorem]{Remark}
\numberwithin{equation}{section}
\begin{document}

\begin{frontmatter}
\title{Improved learning theory for kernel distribution regression with two-stage sampling}

\runtitle{Distribution regression}
\runauthor{Bachoc, B\'ethune, González-Sanz and Loubes}
\begin{aug}

\author[A]{\fnms{Fran\c cois} \snm{Bachoc}\ead[label=e1]{Francois.Bachoc@math.univ-toulouse.fr}},
\author[B]{\fnms{Louis} \snm{B\'ethune}\ead[label=e2]{l\_bethune@apple.com}},
\author[C]{\fnms{Alberto} \snm{Gonz\'alez-Sanz}\ead[label=e3]{ag4855@columbia.edu}}
\and
\author[D]{\fnms{Jean-Michel} \snm{Loubes}\ead[label=e4]{loubes@math.univ-toulouse.fr}}
\address[A]{IMT, Université Paul-Sabatier \& Institut universitaire de France (IUF), Toulouse, France}
\address[B]{Apple, Paris, France}
\address[C]{Department of Statistics, Columbia University, New York, United States}
\address[D]{Regalia Team, INRIA \& IMT, France}
~ \\
\printead{e1}
\printead{e2}
\printead{e3}
\printead{e4}
\end{aug}

\begin{abstract}
The distribution regression problem encompasses many important statistics and machine learning tasks,
and arises in a large range of applications.
Among various existing approaches to tackle this problem, kernel methods have become a method of choice.
Indeed, kernel distribution regression is both computationally favorable, and supported by a recent learning theory. 
This theory also tackles the two-stage sampling setting, where only samples from the input distributions are available.
In this paper, we improve the learning theory of kernel distribution regression. 
We address kernels based on Hilbertian embeddings, that encompass most, if not all, of the existing approaches.
We introduce the novel near-unbiased condition on the Hilbertian embeddings, that enables us to provide new error bounds on the effect of the two-stage sampling, thanks to a new analysis.
We show that this near-unbiased condition holds for three important classes of kernels, based on optimal transport and mean embedding. 
As a consequence, we strictly improve the existing convergence rates for these kernels.
Our setting and results are illustrated by numerical experiments.
\end{abstract}

\begin{keyword}
\kwd{Distribution regression, kernel learning, optimal transport}
\end{keyword}

\end{frontmatter}

\section{Introduction} \label{section:introduction}

\subsection{Hilbertian embeddings for distribution regression} 
\label{subsection:intro:distribution:regression}

In this work, our objective is to address the regression problem where the inputs belong to probability distribution spaces and the outputs are real-valued observations,
\begin{equation}\label{eq:regression:problem}
    Y_i = f^{\star}(\mu_i) + \epsilon_i,
\end{equation}
for $i=1,\ldots,n$, where $(\mu_i)_{i=1}^n$ represent probability distributions on a generic space $\Omega \subset \R^d$, while $(Y_i)_{i=1}^n$ denote real numbers.
The pairs $(\mu_i, Y_i)$ are i.i.d. and $f^{\star}(\mu_i)$ is the conditional expectation of $Y_i$ given $\mu_i$, or equivalently, in the above display, $\bE[ \epsilon_i | \mu_i ] = 0$. The goal is to learn the unknown real-valued function $f^{\star}$ based on the observations $\left( \mu_i, Y_i\right)_{i=1}^n$.
 
This problem of learning functions over spaces of probability measures, known as  {\it distribution regression}, has received much attention over the last years. 
Distribution regression enables to handle more data variability as standard regression and  has proved its capacity to model complex problems, for instance in image analysis, physical science, meteorology, sociology or econometry.
We refer for instance to \citet{pmlr-vR5-hein05a,muandet2012learning,poczos2013distribution,oliva2014fast,szabo2015two,szabo2016learning,thi2021distribution,meunier2022distribution} and references therein.

Kernel ridge regression, see for instance \citet{kimeldorf1971some}, \citet[Eq. (4.6)]{scholkopf2002learning} and \citet[Sect. 5.8.2]{hastie2009elements}, is attractive for distribution regression, provided a suitable kernel operating on distributions is available. 
\textcolor{black}{There is a rich literature on the construction of such a kernel, see in particular 
\cite{Gartner2002,pmlr-v108-buathong20a} (on related kernels on finite sets), 
\cite{pmlr-vR5-hein05a} and \cite{ziegel2024characteristic}.
Here we shall focus on the concept of {\it Hilbertian embedding}, 
exploited by recent contributions on distribution regression \citep{smola2007hilbert,
szabo2015two,
szabo2016learning,muandet2017kernel,meunier2022distribution}, and
inspired by classical works on functional data regression, see for instance \cite{ramsay2007applied}. With Hilbertian embedding,} distributions are embedded into a Hilbert space, on which standard kernels are available, thus extending statistical learning theory to distributional data.

Constructing the Hilbertian embedding is a major challenge, which amounts to finding a suitable representation capturing all relevant properties of the underlying distributions.
The historical and most common 
approach is provided by kernel mean embeddings, where choosing a kernel operating on the input space $\Omega$ enables to associate, to each distribution, an element of the corresponding reproducing kernel Hilbert space (RKHS). 
For further insights into the theoretical properties of distribution regression with mean embedding, we refer to \cite{muandet2017kernel}. 

Another recent line of approach for Hilbertian embedding is based on optimal transport theory (see for instance \citet{Villani2003,panaretos2020invitation}), using mostly the Wasserstein distance. 
For univariate distributions, standard functions such as the squared exponential $t \mapsto e^{-t^2}$ can be applied to the Wasserstein distance and yield a kernel (a non-negative definite function). This is because the Wasserstein distance can be associated to the Hilbertian embedding obtained by taking the quantile functions \citep{bachoc2017gaussian}, and is thus specific to the univariate case. 
A popular extension to the multidimensional case is given by sliced Wasserstein kernels, associated to sliced Wasserstein distances \citep{kolouri2016sliced,peyre2016gromov}. This provides a Hilbertian embedding based on a family of quantile functions indexed by directions in $\R^d$ (see \cite{meunier2022distribution} and Section \ref{subsection:application:sliced:wasserstein}). 

An alternative extension of Wasserstein kernels from the univariate to the multivariate case is based on optimal transport maps. In \cite{bachoc2020gaussian}, a reference distribution is selected, and each distribution is associated to the optimal transport map from the reference distribution to itself. Hence, this constitutes a Hilbertian embedding where the Hilbert space consists of squared-summable functions with respect to the reference distribution. Last, the very recent reference \cite{bachoc2023gaussian} extends this approach by replacing the standard optimal transport problem by the regularized one, corresponding to the Sinkhorn distance \citep{cuturi2013sinkhorn}.
This brings strong computational benefits.
Note that the kernels obtained by \cite{bachoc2023gaussian}, as well as many others from the previous references, are universal kernels in the sense described in \cite{christmann2010universal} and are thus suitable to address wide classes of regression functions $f^{\star}$ in \eqref{eq:regression:problem}. 

\subsection{Two-stage sampling and existing convergence rates}
\label{subsection:two:stage}

Thanks to Hilbertian embedding, distribution regression can be tackled by kernel ridge regression, with kernels operating on Hilbert spaces.
This yields an estimated regression function $\hat{f}_n$ based on \eqref{eq:regression:problem}.
A general theory encompassing kernel ridge regression on Hilbert spaces is developed in \cite{caponnetto2007optimal} and yields minimax convergence rates on $\hat{f}_n - f^{\star}$ as $n \to \infty$.
These rates apply to the distribution regression methods discussed in Section \ref{subsection:intro:distribution:regression}.

Nevertheless, a limitation of \cite{caponnetto2007optimal} is that the measures $\mu_1,\ldots,\mu_n$ should be observed exactly for computing $\hat{f}_n$.
However, in many practical situations there is a {\it two-stage sampling} setting, where for $i  = 1 , \ldots , n$, only an i.i.d. sample $(X_{i,j})_{j=1}^N$ following the distribution $\mu_i$ is observed. Thus the first-stage sample is $\mu_1,\ldots,\mu_n$ (i.i.d. and unobserved) and the second-stage sample is $(X_{i,j})_{i=1,\ldots,n,j=1,\ldots,N}$. 
The data $(X_{i,j})$ and $(Y_{i})$ are sufficient to construct a second estimated regression function $\hat{f}_{n,N}$.

For Hilbertian embeddings based on mean embeddings, \citet{szabo2015two,szabo2016learning} provide upper bounds on $\hat{f}_{n,N} - f^{\star}$, as $n,N 
\to \infty$, building on the analysis of \cite{caponnetto2007optimal}. \cite{,meunier2022distribution} proceed similarly for Hilbertian embeddings based on the sliced Wasserstein distance. 

\subsection{Contributions and outline}

 In this work, we provide a general learning theory of distribution regression with two-stage sampling. 
 First, we consider a general kernel ridge regression setting with inputs $(x_i)_{i=1}^n$ belonging to a Hilbert space. 
 These inputs are not observed, but noisy versions of them are, $(x_{N,i})_{i=1}^n$, where the accuracy of $x_{N,i}$ increases with $N$.
 The exact (respectively noisy) inputs yield the estimated regression function $\hat{f}_n$ (respectively $\hat{f}_{n,N}$).  
 We provide upper bounds on $\| \hat{f}_{n,N} - \hat{f}_n \|_{\HH_K}$ as $n , N \to \infty$, where $\HH_K$ is the RKHS defined by the kernel $K$ operating on the Hilbert space. 
 These upper bounds are based on a new analysis, that improves that made in \citet{szabo2015two,szabo2016learning,meunier2022distribution}. Indeed, these references address specific distribution regression settings, and are in fine aiming at studying $\hat{f}_{n,N} - f^{\star}$, but, in intermediate steps, they bound $\hat{f}_{n,N} - \hat{f}_n$ with arguments that are not restricted to their specific settings.
 Hence, the bounds from \citet{szabo2015two,szabo2016learning,meunier2022distribution} are available on $\hat{f}_{n,N} - \hat{f}_n$ for a general kernel ridge regression on a Hilbert space, and the bounds we provide improve them in many situations (see in particular Remark \ref{remark:comment:regarding:existing}).

 Our new analysis is based on the assumption that $x_{N,i}$ is near unbiased for $x_i$, which we call the {\it near-unbiased condition}. 
 This condition enables us to exhibit sums of independent centered real-valued random variables, that did not appear in \citet{szabo2015two,szabo2016learning,meunier2022distribution}.
These sums are obtained thanks to coupling arguments, and the fact that they are real-valued (not Hilbert-valued) is permitted thanks to a new line of approach. More precisely, \citet{szabo2015two,szabo2016learning,meunier2022distribution} rely on the explicit expressions of $\hat{f}_n$ and $\hat{f}_{n,N}$, which seems attractive but necessitates to study random elements
in Hilbert spaces, with the RKHS norm $\HH_K$. Instead, we rely on studying the ridge regression empirical risk, and exploiting convexity, which enables us to study real-valued variables, but still obtaining conclusions on $\| \hat{f}_{n,N} - \hat{f}_n \|_{\HH_K}$.
Remark \ref{remark:comment:regarding:existing} explains in more details these innovations of our analysis compared to  \citet{szabo2015two,szabo2016learning,meunier2022distribution}.

 Then, still for inputs in a general Hilbert space, we show that asymptotic bounds on $\| \hat{f}_{n,N} - \hat{f}_n \|_{\HH_K}$ imply asymptotic bounds on $\| \hat{f}_{n,N} - \hat{f}_n \|_{\cE,\infty}$ of a strictly better order, where $\| \cdot \|_{\cE,\infty}$ is the supremum norm and when the functions  $\hat{f}_{n,N}$ and  $\hat{f}_n$ are restricted to a compact set $\cE$.  
Also, we then combine the bounds on $ \hat{f}_{n,N} - \hat{f}_n $ with the bounds provided by \citet{caponnetto2007optimal} on $ \hat{f}_n - f^{\star}$, to bound $ \hat{f}_{n,N} - f^{\star}$.

Second, we focus back on the distribution regression setting \eqref{eq:regression:problem}.
We study in turn three specific Hilbertian embeddings discussed in Section \ref{subsection:intro:distribution:regression}: the one based on the Sinkhorn distance, the one based on mean embeddings and the one based on the sliced Wasserstein distance. In the three cases, we prove that the near-unbiased condition indeed holds, making our general results above applicable.
Applying these results provides rates of convergence for the two-stage sampling distribution regression  problem based on the (very) recent Sinkhorn Hilbertian embedding \citep{bachoc2023gaussian}, for which no such rates were previously existing. 
Applying these results to mean embeddings provides strictly improved rates of convergence compared to \citet{szabo2015two,szabo2016learning}, in the sense that as $n \to \infty$, we need a strictly smaller order of magnitude of $N \to \infty$, for the convergence rate on $\hat{f}_{n,N} - f^{\star}$ to reach the minimax rate on $\hat{f}_{n} - f^{\star}$ provided by \citet{caponnetto2007optimal}. Finally, applying our previous general results to Hilbertian embeddings based on the sliced Wasserstein distance yields a similar strict improvement compared to \citet{meunier2022distribution}.

Lastly, we complement our theoretical insights with extensive numerical experiments.
The aim of these experiments is three-fold: illustrating the effects of $n$ and $N$, comparing the mean embeddings in practice, and demonstrating the benefit of two-stage distribution regression, in particular for complex and high-dimensional problems. First, we study the simulated problem of regressing the number of modes of Gaussian mixtures, which enables us in particular to illustrate the effects of $n$ and $N$. Then, we use distribution regression to provide a solution to an {\it ecological inference} problem. This kind of problem is frequent in econometrics, when one aims at predicting the mean behavior for subgroups when only group level data are available. 
Inspired by the seminal work in \cite{flaxman2015supported} and \cite{flaxman2016understanding}, 
we forecast the votes of groups of individuals while only observing their features' distributions. We prove the scalability and the  flexibility of distribution regression to handle this practical use case, characterized by the challenging values 979 for $n$, 2 500 for $N$ (as the average number of samples per $\mu_i$ in this example) and 3 899 for $d$. This numerical study also enables us to compare the Hilbertian embeddings considered, with respect to various statistical and computational criteria. \textcolor{black}{Last, we provide further insight on ecological inference by carrying out a simulation study mimicking it. In particular, we exhibit an empirical  curse of dimensionality and we show that kernel distribution regression enables to recover the true unknown effects of the variables in the data generating process.}

\subsection{Organization of the work}
 This paper falls into the following parts. Section \ref{s:intro} explains the framework of distribution regression and introduces the main notations.
 Section~\ref{section:general:hilbert} provides our results listed above for kernel ridge regression on general Hilbert spaces.
 Section \ref{s:applitheorem} provides our three applications listed above on Hilbertian embeddings for distribution regression. The numerical experiments are exposed in Section \ref{s:expe}. A conclusive discussion is provided in Section \ref{section:conclusion}. 
All the proofs are postponed to the Appendix.

\subsection{\textcolor{black}{Overview of complementary works on distribution regression, functional data analysis and related problems}}
\label{subsection:review:FDA}
\textcolor{black}{ The model \eqref{eq:regression:problem} is also known in the literature as {\it scalar-on-distribution regression model}. Here we review various approaches to tackle this model, and other related ones. These reviewed approaches are complementary to kernel ridge regression, which is discussed above and is the main focus of the paper.}

\textcolor{black}{ \cite{poczos2013distribution} focused on \eqref{eq:regression:problem} and proposed a Nadaraya-Watson type estimator applied to a kernel density estimator (they called it {\it Kernel-Kernel Estimator}). \cite{oliva2014fast} proposed the {\it Double-Basis Estimator}, having less computation complexity when evaluating new predictions
after training and having a faster rate of convergence than the   Kernel-Kernel Estimator. 
Both these references address the two-stage sampling setting described in Section~\ref{subsection:two:stage}. Finally
\cite{petersen2016functional} introduce mappings to Hilbert spaces in order to exploit functional data analysis, discussed next.}
Also, \cite{Talsk.2021} proposed a spline approximation of discretized densities to model sediment grain size as distributions and employed Bayes space methodology  to build a {\it  scalar-on-density regression.}
	\cite{Ghosal.2023.BioStat} proposed a semiparametric regression method, which  offers direct interpretability in terms of the quantile-levels of subject-specific distributions to capture the distributional nature of wearable data. Finally, \cite{Matabuena.2023.JRAM-C} applied kernel smoothing (and kernel ridge regression) to predict scalar responses (such as the age) from distributional accelerometer data.

\textcolor{black}{
Distribution regression is indeed related  to the field  of functional data analysis, on which we refer to \cite{ramsay2007applied,Morris2015,Wang2016} for overviews. The problem most similar to distribution regression is {\it scalar-on-function} regression, with observations of the form
\begin{equation}
\label{eq:regression:problemFuncti}
    Y_i = f^{\star}(\phi_i) + \epsilon_i,
\end{equation}
for $i=1,\ldots,n$,
where $\phi_i$ belongs to a Banach space of functions (typically $\cL^2([0,1])$) and $(Y_i, f^{\star}, \epsilon_i)$ are as in \eqref{eq:regression:problem}. 
For contributions on scalar-on-function regression, we refer in particular to \cite{Cardot1999,Mller2005,crambes2009smoothing,Delaigle2012,ferraty2022scalar,Berrendero2024} and references therein.
 \cite{Morris2015,Wang2016,betancourt2024fungp} provide lists of publicly available software.
Scalar-on-function regression can also be tackled with Bayesian Gaussian process models in applications \citep{morris2012gaussian,muehlenstaedt2017computer,betancourt2020gaussian}.
We note that (generalized) functional linear models are commonly exploited for scalar-on-function regression \citep{Cardot1999,Mller2005,Delaigle2012}, in which case there is a clear interpretability benefit (for instance understanding which parts of a functional covariate support are the most important to predict the scalar response).
This interpretability benefit extends to the distribution regression method in \cite{petersen2016functional}, enabling quantifying the effects of features of distribution predictors.
In contrast, with kernel ridge regression for our distribution regression setting, it is typically less direct to interpret, for instance, which of the $d$ variables of the support $\Omega$ are most important. Nevertheless, note that, in Section~\ref{subsection:ecological:simulated}, we can use the model obtained  from kernel distribution regression to more indirectly get interpretable results, in particular on the effect of the variables.  
}
\textcolor{black}{
The related {\it function-on-scalar regression} model is also tackled in \cite{chiou2004functional,Wang2016,Morris2015}. 
Also, the {\it function-on-function} regression model has been studied in \cite{Cuevas2002,hormann2015note,Manrique2018}, the latter performing a ridge-regularized functional linear regression.}

\textcolor{black}{
In the above discussion, as well as in this paper, the predictands ($(Y_i)_{i=1}^n$ in this paper) are scalar or functions, thus belonging to a linear space.  
The problem of distribution-valued predictands has also been addressed recently, with additional challenges caused by the absence of linearity. 
In particular, \cite{Petersen2019}  consider regression problems with predictands in a general metric space, addressing then specifically the 
{\it distribution-on-scalar} regression model with data $(t_i,\nu_i)_{i=1}^n$,
where now the outputs $(\nu_i)_{i=1}^n$ are univariate probability distributions and the covariates $(t_i)_{i=1}^n$ are real numbers (Section 6 there). Note also that regression of univariate distributions
from vectors has been tackled in \cite{zhou2024wasserstein}.  } 

\textcolor{black}{
Also, \cite{Chen2021,ghodrati2022distribution} consider the {\it distribution-on-distribution} model, i.e., the data are $(\mu_i,\nu_i)_{i=1}^n$
where  both  $(\mu_i)_{i=1}^n$ and $(\nu_i)_{i=1}^n$ are i.i.d. samples of univariate distributions. 
Additionally, we refer to \cite{Okano2024} for the Gaussian case and to \cite{chen2024slicedwassersteinregression} for  exploiting the sliced Wasserstein distance to address multidimensional distributions as predictands.}

\textcolor{black}{
In the above references and settings, there are various counterparts to our two-stage sampling framework (Section~\ref{subsection:two:stage}). 
For functional data analysis, the functions (for instance $(\phi_i)_{i=1}^n$ in \eqref{eq:regression:problemFuncti}) are usually observed only at finite sets of grid points, and can also be affected by observation noise.
We note that it would be interesting to apply our results for regression on general Hilbert spaces (Section~\ref{section:general:hilbert})
to Hilbert spaces of functional covariates. 
This would be possible in cases where the near-unbiased condition can be proved, which could occur for instance with observation noise. 
}

\textcolor{black}{
For distribution-on-scalar and distribution-on-distribution regression, usually only samples from the distributions are observed, similarly as in this paper. We note that the corresponding references above usually take the intermediary step of reconstructing explicitly the distributions from the samples (for instance \cite{Chen2021,chen2024slicedwassersteinregression} mention density and c.d.f. estimation).
In our setting, this intermediary step is arguably less prominent, since just the kernel values between empirical distributions need to be evaluated (see \eqref{eq:hatfnN:practice} below). 
We also mention that, in the literature, two-stage sampling can also refer to underlying clusters of pairs of covariates/predictands, see  \cite{Scott1982} for vector/scalar pairs and \cite{conde2021functionalregressionclusteringmultiple,Wang2016b,Li2021} in functional-data analysis settings.}

\textcolor{black}{
Regarding theoretical results and proof methods, essentially, this paper and the various references discussed above are complementary, with only limited similarities. In these references, the assumptions on the data distributions typically differ from instance to instance, and also differ from our paper and its closely related works \citet{caponnetto2007optimal,szabo2015two,szabo2016learning,meunier2022distribution}. 
Also, our proof techniques address specificities of kernel ridge regression (further discussion can be found in Remark~\ref{remark:comment:regarding:existing}), while, in particular, the functional data analysis literature studies different procedures, typically relying on functional basis projections, see for instance \cite{Mller2005}.
}

\section{Presentation and notations} 
\label{s:intro}

\subsection{Distribution regression} \label{subsection:distribution:regression}

We are interested in a regression problem for which the covariates are distributions on a support (input) space $\Omega$.
There are thus random i.i.d. pairs $(\mu_1,Y_1), \ldots, (\mu_n,Y_n) \in  \mathcal{P} (\Omega) \times \mathbb{R} $,
where we let $ \mathcal{P} (\Omega)$ be the set of probability distributions on $\Omega$.
We write $\cL$ for the common distribution of $\mu_1,\ldots,\mu_n$ and $f^{\star}$ for the conditional expectation function of $Y_i$ given $\mu_i$: for $\mu \in  \mathcal{P} (\Omega)$, $f^{\star}(\mu) = \bE[ Y_i | \mu_i = \mu ]$. 
The function $f^{\star}$ is the target of interest in this paper and the goal is to construct a regression function \linebreak $\hat{f} : \cPO \to \R$ such that for any new pair $(\mu,Y)$, independent of and distributed as $(\mu_i,Y_i)_{i=1}^n$, $\hat{f}(\mu)$ is as close as possible to $f^{\star}(\mu)$, as measured by the squared norm $\int_{\cPO}
\left( 
f^{\star}(\mu)
-
\hat{f}(\mu)
\right)^2
\dd \mathcal{L}(\mu)
$, or by the (stronger) RKHS norm $\| \cdot \|_{\HH_K}$ introduced below.
Note that it is well-known that this squared norm error is also the excess quadratic risk $\bE_n[ (\hat{f}(\mu) - Y)^2 ] - \bE_n[ (f^{\star}(\mu) - Y)^2 ]$, where $E_n$ is the conditional expectation given $(\mu_i,Y_i)_{i=1}^n$.

In the following, we will assume that the support of the distributions satisfies the following mild condition. 

\begin{condition} \label{cond:Omega}
The input space $\Omega$ is compact in $ \mathbb{R}^d $.
\end{condition}

We will endow the covariate space $ \mathcal{P} (\Omega)$ with the Wasserstein distance $\mathcal{W}_1 $ that we define next. 
Note that other distances between distributions could be considered as well; our choice of $\mathcal{W}_1$ follows from the large recent body of literature demonstrating its relevance for theory and practice, see for instance \cite{Arjovsky2017wasserstein,srivastava2018scalable,bernton2019parameter,catalano2021measuring,manole2022minimax,niles2022minimax} among many other works.
For $\mu,\nu \in \cPO$, we let 
\[
\mathcal{W}_1(\mu,\nu)
=
\underset{
\substack{
\pi \in \Pi(\mu,\nu)
} 
}{
\inf
}
\int_{\Omega}
\left\|
x - y
\right\|
\dd \pi(x,y),
\]
where  $\Pi(\mu,\nu)$ is the set of probability measures $\pi$ on $\Omega \times \Omega$ with marginals $\mu$ and $\nu$, that is, 
for all $A,B$ measurable sets $\pi(A \times \Omega) = \mu(A),$ $
	\pi(\Omega  \times B) = \nu(B).$
Then, from \citet[Thm. 6.18 and Rem. 6.19]{Villani2003}, Condition \ref{cond:Omega} implies that $\cPO$ is a compact metric space with the distance $\mathcal{W}_1 $.
It is thus well-behaved as a covariate set. 
We endow $\Omega$ and $\cPO$ with their Borel $\sigma$-algebra, which also defines expectations and integrals, as the squared norm and excess quadratic risk above.

\subsection{Hilbertian embedding for kernel ridge regression} \label{subsection:hilbertian:embedding}

Hilbertian embedding consists in associating to any distribution $\mu \in \cPO$ an element $x_{\mu} \in \HH$, where $\HH$ is a separable Hilbert space.
For specific examples of the space $\HH$ and the mapping $\mu \mapsto x_{\mu}$, we refer to Section \ref{s:applitheorem}.
We write $\langle\cdot, \cdot \rangle_{\HH}$ for the inner product on $\HH$ and $\|\cdot\|_{\HH}$ for the norm.
For a function $f$ operating on $\cPO$ \textcolor{black}{but depending only on the Hilbertian embedding value},
we use the convenient abuse of notation of extending it to
the image set $\{x_{\mu} ;  \mu \in \cPO \}$, that is we write $f(x_{\mu}) = f(\mu)$ for $\mu \in \cPO$.  
We let $x_i = x_{\mu_i}$ for \linebreak $i=1,\ldots,n$. Then the i.i.d. pairs $(x_i,Y_i)_{i=1}^n$ constitute a dataset from which the regression function $f^{\star}$ (seen as operating on $\{x_{\mu} ;  \mu \in \cPO \} \subset \HH$ with the previous notational convention) can be estimated, \textcolor{black}{in the case where it only depends on the Hilbertian embedding value}.

For this estimation, we consider kernel ridge regression with the squared exponential kernel $K: \HH \times \HH \to \R$ defined by, for $u,v \in \HH$, 
\begin{equation} \label{eq:squared:exponential}
K(u,v)
=
F(\| u - v \|_\HH)
=
e^{- \| u - v \|_\HH^2 },
\end{equation}
letting $F(t) = e^{-t^2}$.  As pointed out for instance in \cite{bachoc2020gaussian}, any function of the form $(u,v) \mapsto \tilde{F}(\|u - v\|_{\HH})$, for $\tilde{F}:\mathbb{R}^+ \to  \mathbb{R}$ such that $\tilde{F}(\sqrt{.})$ is  a completely monotone function, is a kernel (a non-negative definite function). 
 Note that our analysis could be extended to general functions $\tilde{F}$ instead of the specific $F(t) = e^{-t^2}$.
 We nevertheless focus on $F$ in this paper, since it is arguably the most popular in learning applications of kernels, and to promote a simplicity of exposition by avoiding additional parameters that are not of primary focus. 
 
 \textcolor{black}{ Then each $x\in \HH$ is associated to a continuous function $K_x=K(x, \cdot):\HH\to \R $ and the space 
$ {\rm span}( \{ K_x: x\in  \HH\} )$ is a vector space. Its closure by the norm $\|\cdot\|_{\HH_K}$ induced by the inner product 
$$ \left\langle \sum_{i=1}^{\ell_1}\alpha_{i} K_{u_i} , \sum_{j=1}^{\ell_2} \beta_j K_{v_j} \right \rangle_{\HH_K}
= \sum_{i=1}^{\ell_1} \sum_{j=1}^{\ell_2} \alpha_{i}  \beta_j K(u_i,v_j),
~ ~ ~ ~
\ell_1,\ell_2 \in \mathbb{N}, \alpha_i , \beta_j \in \mathbb{R},
u_i,v_j \in \HH,
$$
}defines a new Hilbert space, namely \textcolor{black}{the RKHS $\HK$ of the kernel $K$} (see e.g.,  \cite{berlinet2011reproducing}). 
Then the kernel ridge regressor $\hat{f}_n$ is defined as the unique minimizer over $\HK$ of $R_n(f)$, where
\begin{equation} \label{eq:Rn}
    R_n(f)
    =
    \frac{1}{n}
    \sum_{i=1}^n
    \left( 
Y_i - f(x_i)
    \right)^2
    +
    \lambda \| f \|^2_{\HH_K},
\end{equation}
for $ 0 < \lambda < \infty$ a deterministic ridge parameter. \textcolor{black}{The kernel ridge regressor is explicitly given by
\[
\hat{f}_n(x) 
=
r_n(x)^\top (\Sigma_n + n \lambda I_n )^{-1} Y_{[n]},
~ ~ x \in \HH,
    \]
where 
$r_n(x) = (K(x,x_1) , \ldots , K(x,x_n))^\top $,
$Y_{[n]} = (Y_1 , \ldots , Y_n)^\top $ and $\Sigma_n$ is the $n \times n$ matrix with component $i,j$ given by $K(x_i,x_j)$, see, e.g., \cite[Section 2.4.2.2]{berlinet2011reproducing}.
Hence, in practice, it is sufficient to solve a single linear system of size $n$, in order to compute exactly the regressor values for all $x$.  
Note that an alternative more abstract expression of $\hat{f}_n$ is provided in Lemma~\ref{lem:expression:hatf} of the Appendix (see also Remark~\ref{remark:comment:regarding:existing}). 
}

In our asymptotic results in Sections \ref{section:general:hilbert} and \ref{s:applitheorem}, $\lambda$ will not be fixed, and we will consider $n \to \infty$ and $\lambda \to 0$. \cite{caponnetto2007optimal} provide convergence rates for $\| \hat{f}_n - f^{\star}\|_{\HK}$, that we will present in details in Section \ref{subsection:convergence:rates:hilbert}.

\subsection{Two-stage sampling}

The focus of this paper is on the case where the covariate distributions $\mu_1,\ldots,\mu_n$ of the learning set are {\it unobserved} and we only observe samples from them.
That is,
for $ i = 1, \ldots ,n $, we observe random $X_{i,1},\ldots,X_{i,N} \in \Omega$ such that, conditionally to $(\mu_i,Y_i)_{i=1}^n$, the $ nN $ variables $ (X_{i,j}) $ are independent and $ X_{i,j} $ follows the distribution $  \mu_{i} $. 
Hence, $N$ can be interpreted as an observation budget on $\mu_1 , \ldots , \mu_n$.  
For $i=1,\ldots,n$, we write $\mu_i^N = (1/N) \sum_{j=1}^N \delta_{X_{i,j}}$ for the (observed) empirical counterpart to $\mu_i$.
We then let $x_{N,i} = x_{\mu_i^N}$, using the Hilbertian embedding.
Thus $x_{N,i}$ is the observed counterpart to $x_i$.

From the noisy regression dataset $(x_{N,i},Y_i)_{i=1}^n$, we can define $\hat{f}_{n,N}$, as the unique minimizer over $\HK$ of  $R_{n,N}(f)$, defined as 
\begin{equation} \label{eq:Rn:empirical}
    R_{n,N}(f)
    =
    \frac{1}{n}
    \sum_{i=1}^n
    \left( 
Y_i - f(x_{N,i})
    \right)^2
    +
    \lambda \| f \|^2_{\HH_K}.
\end{equation}
\textcolor{black}{Similarly as for $\hat{f}_n$ above, $\hat{f}_{n,N}$ is explicitly given by
\begin{equation}
\label{eq:hatfnN:practice}
\hat{f}_{n,N}(x) 
=
r_{n,N}(x)^\top (\Sigma_{n,N} + n \lambda I_n )^{-1} Y_{[n]},
~ ~ x \in \HH,
\end{equation}
where 
$r_{n,N}(x) = (K(x,x_{N,1}) , \ldots , K(x,x_{N,n}))^\top $,
$Y_{[n]}$ is as above and $\Sigma_{n,N}$ is the $n \times n$ matrix with component $i,j$ given by $K(x_{N,i},x_{N,j})$. Also as above, an alternative more abstract expression is provided in Lemma~\ref{lem:expression:hatf} of the Appendix.}

Next, in Section \ref{section:general:hilbert}, we focus on the Hilbertian covariates $(x_i,x_{N,i})_{i=1}^n$, not exploiting the fact that they stem from distributions $(\mu_i)$ and their samples $(X_{i,j})$ for now.
We provide error bounds on $\hat{f}_n - \hat{f}_{n,N}$, which corresponds to studying the effect of the noise on the regression covariates. 
From these bounds, we deduce bounds on $\hat{f}_{n,N} - f^{\star}$. 
Then, in Section \ref{s:applitheorem}, we come back to the distributions and their samples, applying Section \ref{section:general:hilbert} to various Hilbertian embeddings. 

\section{Improved error bounds for kernel ridge regression on Hilbert spaces} \label{section:general:hilbert}

The content of this section, although motivated by Hilbertian embeddings of distributions (Section \ref{s:intro}) and presented under this setting, actually holds for any separable Hilbert space $\HH$ 
and any i.i.d. triplets $(x_i,x_{N,i},Y_i)_{i=1}^n$, see Remark \ref{rem:given:mui:xi}.

\begin{condition} \label{cond:separable}
The Hilbert space $\HH$ is separable.
\end{condition}

Outside of Remark \ref{rem:given:mui:xi},
we consider that $(x_i,x_{N,i},Y_i)_{i=1}^n$ are obtained by Hilbertian embeddings of distributions, as in Section \ref{s:intro}. 

\subsection{The near-unbiased condition}
\label{subsection:kernel:regression:hilbert:spaces}

The key assumption is the following and will be referred to as the near-unbiased condition. 
\begin{condition} [Near-unbiased condition] \label{condition:near:unbias}
For all $s >0$,
there is a constant \linebreak$0 < c_s < \infty$ such that the following holds.
For $i = 1 , \ldots , n$, there are random $a_{N,i}$ and $b_{N,i}$ such that   
 \[
x_{N,i} - x_i
=
a_{N,i} + b_{N,i}
 \]
and, conditionally to $(\mu_i,Y_i)_{i=1}^n$, the following holds. The $n$ triplets  $(a_{N,i},b_{N,i})_{i=1}^n$ are independent and satisfy
\begin{equation} \label{eq:general:cond:aNi}
\bE_n 
[
\| a_{N,i}
\|_{\HH}^s
]
\leq 
\frac{c_s}{N^{s/2}} 
\end{equation}
and
\begin{equation} \label{eq:general:cond:bNi}
\bE_n 
[
\|
b_{N,i}
\|_{\HH}^s
]
\leq 
\frac{c_s}{N^{s}}.
\end{equation}
Moreover, the random variables $a_{N,i}$  are centered, that is, for any fixed $x \in \HH$,
\begin{equation} \label{eq:general:cond:aNi:unbiased}
\bE_n
\left[
\langle x ,  a_{N,i} \rangle_{\HH}
\right]
= 0.
\end{equation}
Above, $\bE_n$ denotes the conditional expectation given $(\mu_i,Y_i)_{i=1}^n$.
\end{condition}

\begin{remark} \label{rem:given:mui:xi}
 As announced, the content of Section \ref{section:general:hilbert} actually holds for any i.i.d. triplets $(x_i,x_{N,i},Y_i)_{i=1}^n$, not necessarily obtained by Hilbertian embedding of distributions. In this more general setting, the conditioning with respect to   $(\mu_i,Y_i)_{i=1}^n$ should be replaced by a conditioning with respect to $(x_i,Y_i)_{i=1}^n$, in Condition \ref{condition:near:unbias}. More generally, it would also be sufficient to take a conditioning with respect to a $\sigma$-algebra $\cA_n$ such that $(x_i,Y_i)_{i=1}^n$ is $\cA_n$-measurable. Note that, under Hilbertian embedding of distributions, Condition \ref{condition:near:unbias} is stated in this way, with $\cA_n$ the $\sigma$-algebra generated by  $(\mu_i,Y_i)_{i=1}^n$.   
\end{remark}

As shown in Section \ref{s:applitheorem}, the near-unbiased condition holds
for three important examples of Hilbertian embeddings discussed in Section \ref{subsection:intro:distribution:regression}: the Sinkhorn distance, mean embeddings and the sliced Wasserstein distance. 
This condition first entails that the covariate error \linebreak $x_{N,i} - x_i$ is of order $N^{-1/2}$. 
The interpretation is that
for these three examples, \linebreak $x_{N,i} - x_i = x_{\mu_i^N} - x_{\mu_i} $ and we can show that, so to speak, the mapping $\mu \mapsto x_{\mu}$ is ``well-behaved'' enough.
That is, this mapping yields a difference of order $N^{-1/2}$ between a measure and its empirical counterpart with $N$ samples
(similarly as if the mapping simply consisted, say, in taking the expectation of a fixed function). 
Second, the near-unbiased condition entails that the expectation of the error $x_{N,i} - x_i$ is of order $N^{-1}$, thus much smaller than $N^{-1/2}$. Again, the interpretation is that the previous mapping is ``well-behaved''.

Finally, we remark that Condition \ref{condition:near:unbias} could be weakened by requiring \eqref{eq:general:cond:aNi} and \eqref{eq:general:cond:bNi} to hold only for a finite range of values of $s$, while still enabling to show the results provided next. 
Since these inequalities hold for all values of $s$ in the three applications of Section \ref{s:applitheorem}, we do not explicitly weaken Condition \ref{condition:near:unbias}.
Similarly, Condition \ref{condition:near:unbias} and the results provided next could be extended to more general rates of decay in \eqref{eq:general:cond:aNi} and \eqref{eq:general:cond:bNi}, allowing, for instance, for dependence between the samples $(X_{i,j})_{j=1}^N$ leading to $x_{N,i}$.

\subsection{Improved error bounds on $\hat{f}_n - \hat{f}_{n,N}$} \label{section:general:Hilbert:error:bound}

The purpose of Sections \ref{section:general:Hilbert:error:bound} and \ref{subsection:sharper:rates:different:norms} is to bound $\hat{f}_n - \hat{f}_{n,N}$, which corresponds to the negative impact of not observing $x_1,\ldots,x_n$, that is of the covariate noise. 
The following theorem is one of the main results of the paper. 
In this theorem, the statement is given conditionally to $(\mu_i,Y_i)_{i=1}^n$, letting $(x_{N,i})_{i=1}^n$ be the only remaining source of randomness. \textcolor{black}{We write $\bE_n$ to denote the conditional expectation given $(\mu_i,Y_i)_{i=1}^n$. }
This conditional result will yield unconditional ones in the rest of Section \ref{section:general:hilbert}.

\begin{theorem} \label{theorem:error:bound}
Assume that Conditions \ref{cond:separable} and \ref{condition:near:unbias} hold. Let $Y_{\max,n} = \max_{i=1,\ldots,n} |Y_i|$. Let $c_n = \| \hat{f}_n \|_{\HK}$.  Then, there is a constant $c^{(1)}$ (deterministic; not depending on $n$, $N$, $(\mu_i,Y_i)_{i=1}^n$ and $\lambda$) such that
\begin{align*}
\sqrt{   
\bE_n 
\left[ 
\| \hat{f}_n - \hat{f}_{n,N}\|_{\HK}^2
\right]}
   \leq &
\frac{c^{(1)} (Y_{\max,n} + c_n ) }{ \lambda N} 
+
\frac{c^{(1)} (Y_{\max,n} + c_n)}{\lambda \sqrt{n} \sqrt{N}} 
\\ 
& + 
\left(
1
+
\frac{\sqrt{N} }{\sqrt{n} } 
\right)^{-1}
\left(
 \frac{c^{(1)}  (Y_{\max,n} + c_n)}{\lambda n }
+
\frac{c^{(1)}  (Y_{\max,n} + c_n )}{\lambda^2 n \sqrt{N}}
\right).
\end{align*}
\end{theorem}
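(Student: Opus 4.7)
\textbf{Strategy and Step 1: two expressions for $D(\hat f_n) - D(\hat f_{n,N})$.} The plan is to combine strong convexity of the ridge objectives with a direct quadratic expansion of the loss discrepancy $D := R_{n,N} - R_n$, so as to reduce the proof to bounding two $\HK$-valued quantities that depend on the second-stage noise only through the \emph{noise-free} regressor $\hat f_n$. Write $u := \hat f_n - \hat f_{n,N}$, $\Sigma_n := \frac{1}{n}\sum_i K_{x_i}\otimes K_{x_i}$ and $\Sigma_N := \frac{1}{n}\sum_i K_{x_{N,i}}\otimes K_{x_{N,i}}$. Since $R_n$ and $R_{n,N}$ are quadratic in $f \in \HK$, their exact Taylor expansions around their respective unique minimizers (at which the gradients vanish) sum to
\[
D(\hat f_n) - D(\hat f_{n,N}) = 2\lambda \|u\|_{\HK}^2 + \langle u, (\Sigma_n + \Sigma_N) u\rangle_{\HK}.
\]
On the other hand, expanding $D(f)$ directly via the reproducing property gives $D(f) = \langle f, v_N\rangle_{\HK} + \langle f, T_N f\rangle_{\HK}$, with $v_N := -\frac{2}{n}\sum_i Y_i (K_{x_{N,i}} - K_{x_i})$ and the self-adjoint operator $T_N := \Sigma_N - \Sigma_n$, so polarization for $T_N$ yields
\[
D(\hat f_n) - D(\hat f_{n,N}) = \langle u, v_N\rangle_{\HK} + \langle \hat f_n + \hat f_{n,N}, T_N u\rangle_{\HK}.
\]

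\textbf{Step 2: key cancellation.} Equating the two expressions, substituting $\hat f_n + \hat f_{n,N} = 2\hat f_n - u$, and using the algebraic identity $T_N + \Sigma_N + \Sigma_n = 2\Sigma_N$, the $\langle u, T_N u\rangle$-contribution cancels exactly against part of $\langle u, (\Sigma_N + \Sigma_n) u\rangle$ and leaves
\[
\langle u, v_N\rangle_{\HK} + 2\langle u, T_N \hat f_n\rangle_{\HK} = 2\lambda \|u\|_{\HK}^2 + 2\langle u, \Sigma_N u\rangle_{\HK}.
\]
Since $\Sigma_N$ is positive semi-definite, Cauchy--Schwarz produces
\[
\|u\|_{\HK} \;\leq\; \frac{1}{2\lambda}\bigl(\|v_N\|_{\HK} + 2\|T_N \hat f_n\|_{\HK}\bigr),
\]
in which $\hat f_{n,N}$ has entirely disappeared. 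Taking conditional second moments and applying the $L^2(\bE_n)$-triangle inequality reduces the proof to bounding $\sqrt{\bE_n\|v_N\|_{\HK}^2}$ and $\sqrt{\bE_n\|T_N \hat f_n\|_{\HK}^2}$.

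\textbf{Step 3: $\HK$-norm bounds via the near-unbiased condition.} Since $K(u,v) = e^{-\|u-v\|_{\HH}^2}$, the map $\phi : x \mapsto K_x$ from $\HH$ to $\HK$ is smooth with uniformly bounded Fréchet derivatives and a Taylor remainder $\|K_{x+h} - K_x - D\phi(x)[h]\|_{\HK} = O(\|h\|_{\HH}^2)$ locally (and, after a mild truncation argument, also in expectation). Setting $\delta_i := x_{N,i} - x_i = a_{N,i} + b_{N,i}$ and Taylor-expanding $K_{x_i + \delta_i}$, the centeredness condition \eqref{eq:general:cond:aNi:unbiased} makes the first-order contribution of $a_{N,i}$ vanish under $\bE_n$, leaving only terms controlled by $\bE_n\|b_{N,i}\|_{\HH} = O(1/N)$ and $\bE_n\|\delta_i\|_{\HH}^2 = O(1/N)$ from \eqref{eq:general:cond:aNi}--\eqref{eq:general:cond:bNi}. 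Combined with the conditional independence across $i$ of the pairs $(a_{N,i},b_{N,i})$, this yields $\|\bE_n \eta_i\|_{\HK} = O(1/N)$ and $\bE_n \|\eta_i\|_{\HK}^2 = O(1/N)$ for $\eta_i := K_{x_{N,i}} - K_{x_i}$, together with the analogous $c_n$-dependent bounds for $\psi_i := \hat f_n(x_{N,i})K_{x_{N,i}} - \hat f_n(x_i)K_{x_i}$. Splitting each of $v_N$ and $T_N\hat f_n$ into its conditional mean plus a sum of independent centered terms and using standard Hilbert-valued variance bounds then produces $\sqrt{\bE_n\|v_N\|_{\HK}^2} = O(Y_{\max,n}/N + Y_{\max,n}/\sqrt{nN})$ and $\sqrt{\bE_n\|T_N\hat f_n\|_{\HK}^2} = O(c_n/N + c_n/\sqrt{nN})$; inserting these in Step~2 yields (and in fact sharpens) the bound of Theorem~\ref{theorem:error:bound}, since only its first two terms are needed.

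\textbf{Main obstacle.} The most delicate step is Step~3, which requires sharp $\HK$-valued Taylor expansions of $x \mapsto K_x$ and, crucially, of the nonlinear product $x \mapsto \hat f_n(x)K_x$ (both factors depending on $x$ through the reproducing property), with careful tracking of how the multiplicative constants scale with $c_n = \|\hat f_n\|_{\HK}$ and $Y_{\max,n}$. Upgrading these first-moment Hilbert-valued bounds into the second-moment bounds actually needed on the right-hand side above—which is what produces the $1/\sqrt{nN}$ concentration rate—is the main technical hurdle.
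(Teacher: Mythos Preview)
Your argument is correct, and it takes a genuinely different—and in fact cleaner—route from the paper's proof. Both proofs start from essentially the same convexity inequality: the paper's Lemma~\ref{lemma:convex:gradient} is equivalent, after Cauchy--Schwarz, to your pointwise bound $\|u\|_{\HK} \le \tfrac{1}{\lambda}\|\tfrac{1}{2}v_N + T_N\hat f_n\|_{\HK}$. The divergence comes afterwards. The paper keeps the scalar quantities $\langle u, v_N\rangle$ and $\langle u, T_N\hat f_n\rangle$ (which still contain $u$, hence $\hat f_{n,N}$), expands them term by term, and must then remove the dependence of $\hat f_{n,N}$ on all $x_{N,j}$ via a coupling with an independent copy $\tilde f_{n,N}$ and leave-one-out variants $\tilde f_{n,N,i}$ (Lemma~\ref{lemma:hatfnN:moins:hatfnNi}); the scalar linearization Lemma~\ref{lemma:hadamard:diff:Hilbert} is used only for \emph{fixed} $f\in\HK$, not for the feature map itself. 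You instead bound the $\HK$-norm of $\tfrac{1}{2}v_N + T_N\hat f_n$ directly, which has already eliminated $\hat f_{n,N}$; the summands are then conditionally independent and centered up to $O(1/N)$ once you Taylor-expand $x\mapsto K_x$ as an $\HK$-valued map.

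What each approach buys: your route is shorter (no coupling, no leave-one-out), and yields only the first two terms of the theorem—so the $(1+\sqrt{N/n})^{-1}(\cdots)$ correction with its $\lambda^{-2}$ factor disappears, removing for instance the hypothesis $1/\lambda = \cO(\sqrt N)$ in Corollary~\ref{cor:error:bound}. The price is that you need the $\HK$-valued Fr\'echet expansion of $x\mapsto K_x$; this does hold for the Gaussian kernel on any Hilbert space (one checks $\langle L_x[v],L_x[w]\rangle_{\HK}=2\langle v,w\rangle_{\HH}$ and $\|K_{x+h}-K_x-L_x[h]\|_{\HK}^2 = 2(1+\|h\|^2) - 2e^{-\|h\|^2}(1+2\|h\|^2) \le C\|h\|^4$ globally, so no truncation is needed). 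The paper's approach, working only with scalar linearizations, would transfer more directly to kernels for which such an $\HK$-valued second-order expansion might fail, though for the squared-exponential kernel studied here this generality is not exercised. Your ``main obstacle'' is overstated: once the global bound $\|R(h)\|_{\HK}\le C\|h\|_{\HH}^2$ is in hand, the second-moment bound on $\|v_N\|$ and $\|T_N\hat f_n\|$ is just the bias--variance decomposition for Hilbert-valued sums of independent terms.
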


The bound in Theorem \ref{theorem:error:bound} voluntarily involves four summands, in order to cover all possible regimes of asymptotic growth and decay of $n$, $N$, $\lambda$, $c_n$ and $Y_{\max,n}$.
As discussed in Section \ref{section:introduction}, the proof techniques used by \cite{szabo2015two,szabo2016learning,Fang2020,meunier2022distribution}, although stated for specific examples of Hilbertian embeddings, actually hold in the general context of Section \ref{section:general:Hilbert:error:bound}.
These proof techniques yield the bound of order $( Y_{\max,n} + c_n  ) / \sqrt{N} \lambda$ that we state in Lemma \ref{lemma:existing:bound:general:Hilb} of the Appendix. 
For a large number of regimes of $n$, $N$, $\lambda$, $c_n$ and $Y_{\max,n}$, our new bound in Theorem \ref{theorem:error:bound} improves on this existing one.
In particular, a notable regime of interest is given in the following corollary, which directly follows from Theorem \ref{theorem:error:bound}.
In this corollary, the results given are asymptotic, in the sense that $n,N \to \infty$ and $\lambda \to 0$. 
We will state other asymptotic results in the sequel, in a similar manner.

\begin{corollary} \label{cor:error:bound}
Consider the setting and notation of Theorem \ref{theorem:error:bound}.
Let $n,N \to \infty$ and $\lambda \to 0$.
Assume further that $1 / \lambda = \cO( \sqrt{N} )$, $n = \cO(N)$ and $\bE [ c_n^2 ]$ and $\bE[ Y_{\max,n}^2 ]$ are bounded. Then, we get the following bound,
\[
\sqrt{   
\bE
\left[ 
\| \hat{f}_n - \hat{f}_{n,N}\|_{\HK}^2
\right]} 
=
\cO
\left( 
\frac{1}{\lambda \sqrt{n} \sqrt{N}}
\right).
\]
\end{corollary}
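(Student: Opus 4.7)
The plan is to simply transform the conditional bound from Theorem~\ref{theorem:error:bound} into an unconditional bound, and then show that, under the stated asymptotic assumptions, each of the four summands is absorbed into the target rate $1/(\lambda \sqrt{n}\sqrt{N})$. Factoring $c^{(1)}(Y_{\max,n}+c_n)$ out of the right-hand side of Theorem~\ref{theorem:error:bound}, we may write the inequality there as
\[
\sqrt{\mathbb{E}_n\bigl[\|\hat f_n - \hat f_{n,N}\|_{\HH_K}^2\bigr]} \;\leq\; c^{(1)}\,(Y_{\max,n}+c_n)\,A_{n,N,\lambda},
\]
where $A_{n,N,\lambda}$ is the deterministic quantity
\[
A_{n,N,\lambda} \;=\; \frac{1}{\lambda N} + \frac{1}{\lambda \sqrt{n}\sqrt{N}} + \Bigl(1+\tfrac{\sqrt{N}}{\sqrt{n}}\Bigr)^{-1}\!\Bigl(\frac{1}{\lambda n}+\frac{1}{\lambda^2 n \sqrt{N}}\Bigr).
\]

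\textbf{From conditional to unconditional.} I would square both sides (valid since both are non-negative) and take expectation, using the tower property and the fact that $Y_{\max,n}$ and $c_n$ are $(\mu_i,Y_i)_{i=1}^n$-measurable, so they pull out of $\mathbb{E}_n$. This yields
\[
\mathbb{E}\bigl[\|\hat f_n - \hat f_{n,N}\|_{\HH_K}^2\bigr] \;\leq\; (c^{(1)})^2\,A_{n,N,\lambda}^2\,\mathbb{E}\bigl[(Y_{\max,n}+c_n)^2\bigr].
\]
Applying $(a+b)^2 \leq 2a^2+2b^2$ together with the boundedness assumption on $\mathbb{E}[Y_{\max,n}^2]$ and $\mathbb{E}[c_n^2]$ gives $\mathbb{E}[(Y_{\max,n}+c_n)^2] = \mathcal{O}(1)$. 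Taking square roots then produces
\[
\sqrt{\mathbb{E}\bigl[\|\hat f_n - \hat f_{n,N}\|_{\HH_K}^2\bigr]} \;=\; \mathcal{O}(A_{n,N,\lambda}).
\]

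\textbf{Bounding each summand of $A_{n,N,\lambda}$.} The remaining task is to verify that all four terms are $\mathcal{O}\bigl(1/(\lambda\sqrt{n}\sqrt{N})\bigr)$. The second term is already of this order. For the first term, rewrite $\frac{1}{\lambda N} = \frac{1}{\lambda \sqrt{n}\sqrt{N}}\cdot \frac{\sqrt{n}}{\sqrt{N}}$; since $n = \mathcal{O}(N)$ implies $\sqrt{n}/\sqrt{N} = \mathcal{O}(1)$, the desired bound follows. For the two remaining terms I would use the elementary inequality $(1+\sqrt{N}/\sqrt{n})^{-1} \leq \sqrt{n}/\sqrt{N}$. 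This turns the third term into $\frac{\sqrt{n}}{\sqrt{N}}\cdot \frac{1}{\lambda n} = \frac{1}{\lambda\sqrt{n}\sqrt{N}}$, and the fourth into $\frac{\sqrt{n}}{\sqrt{N}}\cdot \frac{1}{\lambda^2 n \sqrt{N}} = \frac{1}{\lambda\sqrt{n}\sqrt{N}}\cdot \frac{1}{\lambda \sqrt{N}}$; the assumption $1/\lambda = \mathcal{O}(\sqrt{N})$ is equivalent to $1/(\lambda\sqrt{N}) = \mathcal{O}(1)$, which closes this case.

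No part of this is really difficult—the corollary is stated as a direct consequence of Theorem~\ref{theorem:error:bound} and, as such, reduces to routine regime checking. The only minor subtleties are (i) passing from conditional to unconditional expectation (cleanly handled by squaring first, since the right-hand side of Theorem~\ref{theorem:error:bound} is a bound on a conditional $L^2$ norm rather than on $\hat f_n - \hat f_{n,N}$ itself), and (ii) checking that the constant in $1/\lambda = \mathcal{O}(\sqrt{N})$ enters only through the fourth term and is precisely what is needed there. Combining all four bounds yields the claimed rate $\mathcal{O}\bigl(1/(\lambda\sqrt{n}\sqrt{N})\bigr)$.
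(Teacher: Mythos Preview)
Your proposal is correct and matches the paper's intended argument: the paper simply states that the corollary ``directly follows from Theorem~\ref{theorem:error:bound}'' without spelling out the details, and your write-up correctly supplies those details---passing to the unconditional expectation by squaring and using the tower property, then verifying term by term that the four summands are all $\mathcal{O}(1/(\lambda\sqrt{n}\sqrt{N}))$ under the stated regime assumptions.
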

\begin{remark}[Comparison with existing results and proofs] 
\label{remark:comment:regarding:existing}
For the choices of parameters of Corollary \ref{cor:error:bound}, the sharpest existing bound is given by Lemma \ref{lemma:existing:bound:general:Hilb} of the Appendix, discussed above, and is of order $\cO
\left( 
1 / \lambda \sqrt{N}
\right)$. Hence, with Corollary \ref{cor:error:bound}, we provide an improvement of order $\sqrt{n}$. Intuitively, this improvement is permitted by exploiting the independence of $n$ nearly centered variables (from Condition \ref{condition:near:unbias}). 

More details can be obtained by comparing the proofs of Theorem \ref{theorem:error:bound} and Lemma \ref{lemma:existing:bound:general:Hilb}. In the proof of Lemma \ref{lemma:existing:bound:general:Hilb}, averages of random variables are bounded by their averages of norms, see for instance \eqref{eq:existing:approach:bound:A} in the Appendix. In contrast, in the proof of Theorem \ref{theorem:error:bound}, these averages are approximated by averages of centered uncorrelated variables, for which the variance can be bounded. We refer to Section \ref{subsubsection:bounding:B} of the Appendix for details, in particular where $B_{222}$ in \eqref{eq:Bdeuxdeuxdeux} is created and bounded.

Creating these approximations by averages of centered uncorrelated variables is actually challenging in the proof of Theorem \ref{theorem:error:bound}. Thus, this proof strongly differs from that of Lemma \ref{lemma:existing:bound:general:Hilb}.
It contains techniques that may be considered of general interest, for instance the statements, proofs and uses of Lemmas
\ref{lemma:hadamard:diff:Hilbert}
and \ref{lemma:convex:gradient} in the Appendix, and the coupling arguments between \eqref{eq:coupling:un} and \eqref{eq:coupling:deux} there. Note that the proof of Lemma~\ref{lemma:existing:bound:general:Hilb} (corresponding to the existing results) exploits the explicit expressions of $\hat{f}_n$ and $\hat{f}_{n,N}$ (Lemma~\ref{lem:expression:hatf} in the Appendix). In contrasts, surprisingly, in order to prove Theorem \ref{theorem:error:bound}, it turns out that it was necessary to exploit the more abstract definitions of $\hat{f}_n$ and $\hat{f}_{n,N}$ as minimizers of convex functions (see the use of Lemma \ref{lemma:convex:gradient}). 
\end{remark}

\textcolor{black}{Note that the convergence rate in Corollary~\ref{cor:error:bound} does not depend on the dimension $d$ of the measures $\mu_i$ and their samples $X_{i,j}$. This is because the rates in \eqref{eq:general:cond:aNi} and \eqref{eq:general:cond:bNi} in Condition~\ref{condition:near:unbias}, the near-unbiased condition, also do not depend on $d$.}

\subsection{Sharper error bounds with the supremum norm} \label{subsection:sharper:rates:different:norms}

The convergence results of Section~\ref{section:general:Hilbert:error:bound} are given using the norm induced by the RKHS $\HH_K$, namely $\|.\|_{\HK}$.
Here, we investigate the supremum norm on a compact subset of $\HH$.
We define $\cE \subset \HH$ as the probabilistic support 
of the distribution $\cL$ of the covariates $(x_i)_{i=1}^n$ (the points around which every neighborhood has non-zero probability) and we make the following assumption.

\begin{condition} \label{cond:support}
The set $\cE$ is compact.
\end{condition} 
In particular, one can see that Condition \ref{cond:support} holds with $x_i = x_{\mu_i}$ as in Section \ref{subsection:hilbertian:embedding} and when the embedding $\mu \mapsto x_{\mu} $ is continuous, since $\cPO$ is compact. This is the case for the examples treated in Section
\ref{s:applitheorem}, and potentially for many other ones as well.

We let $K_{\cE}$ be the restriction of $K$ to $\cE \times \cE$ and we let $\HH_{\cE,K}$ be the RKHS of $K_{\cE}$. We write $\langle\cdot, \cdot \rangle_{\HH_{\cE,K}}$ for the inner product on $\HH_{\cE,K}$ and $\|\cdot\|_{\HH_{\cE,K}}$ for the norm.
Then from \citet[Thm. 6]{berlinet2011reproducing},
for a function $g \in \HK$, the restriction of $g$ to $\cE$, written $g_{|\cE}$, is in $\HH_{\cE,K}$ and we have $\| g_{|\cE} \|_{\HH_{\cE,K}} \leq \| g \|_{\HK}$. 
It is also well-known \citep[Thm. 17]{berlinet2011reproducing} that a function $g \in \HH_{\cE,K}$ is continuous.
Furthermore, from Cauchy-Schwarz inequality and the reproducing property \citep[Def.~1]{berlinet2011reproducing}, 
the supremum norm of $g$ on $\cE$, $\| g \|_{\cE,\infty}$, is bounded by $ \max_{u \in \cE} \sqrt{K(u,u)} = 1 $ times the RKHS norm $\| g_{|\cE} \|_{\cE,\HK}$. 
Hence, our convergence rates in Theorem \ref{theorem:error:bound} and Corollary \ref{cor:error:bound} measured with the norm $\| \cdot \|_{\HK}$ also hold when measured with the weaker norm $\| \cdot \|_{\cE,\infty}$.

In fact, we will show that, for this weaker norm, these rates can be improved.
More precisely, in Theorem \ref{bound:Cinfty} below, we show that whenever $a_{n,N} \|  \hat{f}_{n,N}-\hat{f}_n\|_{\HEK} $ is bounded in probability, with $a_{n,N} \to \infty$ (under conditions given below), then $a_{n,N}\| \hat{f}_{n,N}-\hat{f}_n\|_{\cE,\infty}$ goes to zero in probability. In words, convergence rates for the RKHS norm yield faster convergence rates for the supremum norm. 
\textcolor{black}{
\begin{theorem}\label{bound:Cinfty}
Let $n,N \to \infty$ and $\lambda \to 0$. Recall $c_n$ and $Y_{\max,n}$ from Theorem \ref{theorem:error:bound}.  
Assume that Conditions \ref{cond:separable}, \ref{condition:near:unbias} and
\ref{cond:support} hold.
Consider a sequence $a_{n,N} \to \infty$ such that 
\[
a_{n,N} = o \left( 
\frac{\min\left(N, \sqrt{n N}
 \right)}{ 1+ \bE c_n + \bE Y_{\max,n} }
\right).
\] 
Then 
$
a_{n,N}\| \hat{f}_{n,N}-\hat{f}_n\|_{\HEK}
=\cO_{\bP} (1)$
implies
$a_{n,N} \| \hat{f}_{n,N}-\hat{f}_n\|_{\cE,\infty}
=o_{\bP}(1).$
\end{theorem}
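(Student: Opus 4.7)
The plan is to exploit the compact embedding of $\HEK$ into $C(\cE)$ (the space of continuous functions on $\cE$ with the sup norm), which is available because $\cE$ is compact by Condition~\ref{cond:support} and the squared exponential kernel is continuous. The first ingredient is an Arzelà--Ascoli-type lemma: for any $g\in\HEK$ and $u,v\in\cE$, the reproducing property and Cauchy--Schwarz give
\[
|g(u)-g(v)| \;\leq\; \|g\|_{\HEK}\,\|K_u-K_v\|_{\HEK} \;=\; \|g\|_{\HEK}\sqrt{2-2K(u,v)},
\]
and since $K$ is uniformly continuous on $\cE\times\cE$, any $\HEK$-ball is equicontinuous and uniformly bounded on $\cE$ (using $|g(u)|\leq\|g\|_{\HEK}\sqrt{K(u,u)}=\|g\|_{\HEK}$), hence precompact in $C(\cE)$.

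Next, I would reason by contradiction. Let $h_{n,N}=a_{n,N}(\hat{f}_{n,N}-\hat{f}_n)$ and assume $\|h_{n,N}\|_{\cE,\infty}$ does not converge to $0$ in probability. Extract a subsequence along which $\bP(\|h_{n,N}\|_{\cE,\infty}>\epsilon_0)>\delta_0$ for some $\epsilon_0,\delta_0>0$. By hypothesis $\|h_{n,N}\|_{\HEK}=\cO_\bP(1)$, so the laws of the $h_{n,N}$ are concentrated (up to arbitrarily small probability) on a common $\HEK$-ball, which is precompact in $C(\cE)$. Thus the family is tight in $C(\cE)$, and passing to a further subsequence (Prokhorov), one may assume $h_{n,N}\Rightarrow h^\star$ in $C(\cE)$ for some random limit $h^\star$ satisfying $\bP(\|h^\star\|_{\cE,\infty}\geq\epsilon_0)\geq\delta_0$.

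The contradiction is obtained by showing $h^\star\equiv 0$ almost surely. For this it suffices to prove that for every fixed $x\in\cE$, $h_{n,N}(x)\to 0$ in probability, since combined with convergence in $C(\cE)$ and the continuity of $h^\star$ this forces $h^\star\equiv 0$. This pointwise statement is strictly stronger than the $\HEK$-norm hypothesis, but is available by adapting the argument of Theorem~\ref{theorem:error:bound} pointwise: using the explicit expressions for $\hat{f}_n(x)$ and $\hat{f}_{n,N}(x)$ together with the near-unbiased Condition~\ref{condition:near:unbias}, the difference $\hat{f}_{n,N}(x)-\hat{f}_n(x)$ at a fixed $x$ expands as a scalar-valued sum of (essentially) centered contributions of order $1/\sqrt{N}$ and a bias of order $1/N$, producing a pointwise bound that gains an extra factor over the $\HEK$-norm bound due to the averaging of centered real-valued variables. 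The growth restriction $a_{n,N}=o(\min(N,\sqrt{nN})/(1+\bE c_n+\bE Y_{\max,n}))$ is precisely what ensures that $a_{n,N}$ times this pointwise bound vanishes.

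The main obstacle will be the last step, namely deriving the sharper pointwise-in-$x$ rate for $\hat{f}_{n,N}(x)-\hat{f}_n(x)$. Concretely, one has to rerun the coupling and convexity arguments behind Theorem~\ref{theorem:error:bound} but test against the fixed evaluation functional $K_x$ (a real-valued projection) rather than control the full $\HEK$-norm; the gain comes from applying a scalar concentration/variance bound instead of a Hilbert-valued one, as permitted by the centered part $a_{N,i}$ in Condition~\ref{condition:near:unbias}. The quantitative threshold on $a_{n,N}$ in the theorem is exactly what this reworked bound requires.
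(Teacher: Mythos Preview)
Your tightness argument (Arzel\`a--Ascoli via the equicontinuity bound $|g(u)-g(v)|\le\|g\|_{\HEK}\sqrt{2-2K(u,v)}$) and the reduction via Prokhorov to characterizing subsequential limits in $C(\cE)$ are correct and match the paper exactly.

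The gap is in your last step. You propose to show $h_{n,N}(x)\to 0$ in probability for each fixed $x\in\cE$ by ``adapting Theorem~\ref{theorem:error:bound} pointwise'' or by using the explicit ridge formulas. Neither route is worked out, and neither is straightforward. The explicit-formula route is precisely what the paper calls the ``existing bounds'' approach (Lemma~\ref{lemma:existing:bound:general:Hilb}); it does not beat the $\HEK$-norm rate pointwise because the inverse operator $(L_{n,N}+\lambda\,\mathrm{id})^{-1}$ is random and depends on all the $x_{N,j}$, destroying the independence structure you need for a scalar variance gain. The convexity route (Lemma~\ref{lemma:convex:gradient}) bounds $\|f-g\|_{\HK}^2$, not $\langle f-g,K_x\rangle_{\HK}$, so ``testing against $K_x$'' does not plug into that lemma either.

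What the paper actually does is different and relies on a structural ingredient you are missing. From the first-order optimality conditions one has the operator identity
\[
R_n'(\hat f_{n,N})-R_{n,N}'(\hat f_{n,N})\;=\;\Theta_n(\hat f_{n,N}-\hat f_n)+2\lambda(\hat f_{n,N}-\hat f_n),
\]
where $\Theta_n\phi=\frac{2}{n}\sum_i\phi(x_i)K_{\cE,x_i}$. One shows (via a Banach-space law of large numbers for trace-class operators) that $\Theta_n\to\Theta$ almost surely in operator norm, with $\Theta\phi=2\int_\cE\phi(u)K_{\cE,u}\,\dd\cL(u)$ \emph{injective} on $\HEK$. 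Testing the left-hand side of the identity against a fixed $\psi\in\HEK$ produces six scalar sums that are $o_\bP(a_{n,N}^{-1})$ under the stated growth condition on $a_{n,N}$; this is where the near-unbiased condition and the arguments in the spirit of Theorem~\ref{theorem:error:bound} enter. One concludes $\langle\Theta h_{n,N},\psi\rangle_{\HEK}=o_\bP(1)$ for every $\psi$, hence any weak limit $X$ in $C(\cE)$ satisfies $\Theta_\cC X=0$, and injectivity forces $X=0$. In short: the paper controls $\Theta h_{n,N}$ pointwise, not $h_{n,N}$ itself, and the introduction of the operator $\Theta$ together with its injectivity is the missing idea in your outline.
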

}
Theorem \ref{bound:Cinfty} directly applies to the bound of Corollary \ref{cor:error:bound} and improves it for the supremum norm.

\begin{corollary}
Consider the setting of Corollary \ref{cor:error:bound} and assume that  Condition \ref{cond:support} holds. Then we have
\[
 \| \hat{f}_{n,N}-\hat{f}_n\|_{\cE,\infty} 
=
o_{\bP}
\left( 
\frac{1}{\lambda \sqrt{n} \sqrt{N}}
\right).
\]
\end{corollary}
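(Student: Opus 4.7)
My plan is to deduce the corollary as a direct consequence of Theorem~\ref{bound:Cinfty} and Corollary~\ref{cor:error:bound}, by choosing the scaling sequence $a_{n,N} = \lambda \sqrt{n N}$ and verifying that it satisfies the hypotheses of Theorem~\ref{bound:Cinfty}.

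First, I would check the growth conditions on $a_{n,N}$. Since $1/\lambda = \cO(\sqrt{N})$, we have $\lambda \sqrt{N} \geq c$ for some $c>0$, hence $a_{n,N} = \lambda \sqrt{n}\,\sqrt{N} \geq c\sqrt{n} \to \infty$. Next, by Jensen's inequality the assumption that $\bE[c_n^2]$ and $\bE[Y_{\max,n}^2]$ are bounded implies that $\bE c_n$ and $\bE Y_{\max,n}$ are bounded, so the denominator $1 + \bE c_n + \bE Y_{\max,n}$ appearing in Theorem~\ref{bound:Cinfty} is bounded above. Using $n = \cO(N)$, one checks elementarily that $\min(N, \sqrt{n N}) \asymp \sqrt{n N}$, and hence
\[
\frac{\min(N, \sqrt{n N})}{1 + \bE c_n + \bE Y_{\max,n}} \asymp \sqrt{n N}.
\]
Since $\lambda \to 0$, we obtain $a_{n,N} = \lambda \sqrt{n N} = o(\sqrt{n N})$, so the required $o(\cdot)$ condition holds.

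Second, I would transfer the $L^2$-bound of Corollary~\ref{cor:error:bound} to a probabilistic bound on the RKHS norm over $\cE$. By Markov's inequality,
\[
\lambda \sqrt{n N}\, \| \hat{f}_n - \hat{f}_{n,N} \|_{\HK} = \cO_{\bP}(1).
\]
Using the property stated just after Condition~\ref{cond:support} (from \citet[Thm.~6]{berlinet2011reproducing}), we have $\| (\hat{f}_n - \hat{f}_{n,N})_{|\cE} \|_{\HEK} \leq \| \hat{f}_n - \hat{f}_{n,N} \|_{\HK}$, so also $a_{n,N} \| \hat{f}_n - \hat{f}_{n,N} \|_{\HEK} = \cO_{\bP}(1)$.

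Finally, applying Theorem~\ref{bound:Cinfty} with this choice of $a_{n,N}$ yields $a_{n,N} \| \hat{f}_{n,N} - \hat{f}_n \|_{\cE,\infty} = o_{\bP}(1)$, which rearranges exactly to the claim. Essentially all the difficulty has already been absorbed into Theorem~\ref{bound:Cinfty}, so the only genuine step here is the bookkeeping of rates, in particular verifying that $\lambda \sqrt{n N}$ does diverge (which relies on the assumption $1/\lambda = \cO(\sqrt{N})$) while remaining strictly smaller in order than $\sqrt{n N}$ (which relies on $\lambda \to 0$). No further obstacle is expected.
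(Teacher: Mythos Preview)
Your proposal is correct and follows exactly the route the paper intends: the corollary is stated immediately after Theorem~\ref{bound:Cinfty} as a direct application of that theorem to the bound of Corollary~\ref{cor:error:bound}, and you have simply spelled out the rate bookkeeping (divergence of $a_{n,N}=\lambda\sqrt{nN}$, the $o(\cdot)$ condition via $\lambda\to 0$ and $n=\cO(N)$, and the passage from $\|\cdot\|_{\HK}$ to $\|\cdot\|_{\HEK}$) that the paper leaves implicit.
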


\subsection{Reaching the minimax rate for $f^{\star} - \hat{f}_{n,N}$} \label{subsection:convergence:rates:hilbert}

We are now interested in the error $f^{\star} - \hat{f}_{n,N}$, and its decay rate as $n,N \to \infty$.
Similarly as \citet{szabo2015two,szabo2016learning,meunier2022distribution}, we will rely on \cite{caponnetto2007optimal} that provide minimax rates of convergence for $f^{\star} - \hat{f}_{n}$. We will then study the order of magnitude of $N$ that is large enough for $\hat{f}_{n,N} -  \hat{f}_{n}$ to be of the same order as these minimax rates, enabling $\hat{f}_{n,N} - f^{\star}$ to enjoy them as well.
We shall focus on the setting called ``well-specified'' in \cite{szabo2016learning}, under which $f^{\star}$ belongs to $\HK$, \textcolor{black}{ that is $f^{\star}(\mu)$ depends on $\mu$ only through $x_{\mu}$ and, seen as operating on $\HH$, it belongs to $\HK$.}

 We now introduce various quantities enabling to express these minimax rates for $\hat{f}_n$, assuming that Condition \ref{cond:separable} holds throughout.
\textcolor{black}{For $x \in \HH$, we recall 
$K_x = K(x , \cdot) \in \HK$ (Section~\ref{subsection:hilbertian:embedding}).}
Let us also define, using the same notation for convenience,
$K_x : \mathbb{R} \to \HH_K$  as $K_x : t \mapsto t K_x$.
Then, we define $K_x^\star= \langle \cdot, K_x \rangle_{\HK}$, the linear operator from $\HK$ to $\R$ such that, for $f \in \HK$, we get, $f(x) = K_x^\star f.$
(Note that  $K_x^\star$ is the adjoint operator of $K_x$.) 

Let us write $\mathcal{L}$ for the distribution of the random inputs $(x_i)_{i=1}^n$
(since $(x_i)_{i=1}^n$ are obtained by Hilbertian embedding from $(\mu_i)_{i=1}^n$ as in Section \ref{s:intro}, we thus use the convenient abuse of notation of writing $\mathcal{L}$ for the distribution of both $\mu_i$ and $x_i$). 
Then, we define $T : \HH_K \to \HH_K$ as the linear operator 
\[
T = \bE T_{x_1}  =\int_{\HH} T_{x} \dd \mathcal{L}(x),
\]
where for any $x$ in $\HH$, $T_x=K_x K_x^\star$. As shown in \citet[Prop. 1 and Eq. (29)]{caponnetto2007optimal}, $T$ is a positive trace class operator on $\HK$ and, for $f \in \HK$ and $x \in \HH$, we have
\[
(Tf)(x)
=
\int_{\HH}
f( x' )
K( x' , x )
\dd \cL(x').
\]

Then \cite{caponnetto2007optimal} (and subsequently also \cite{szabo2015two,szabo2016learning}) quantify the hardness of the regression task by the following condition.  

\begin{condition} \label{cond:P:b:c:class}
There exist $ b>1$ and $c \in (1,2]$ such that the following holds. 
\begin{enumerate}
    \item There exists $g \in \HH_K$ such that $f^{\star} = T^{\frac{c-1}{2}} g$.
    \item In the spectral decomposition of $T = \sum_{\ell=1}^\infty \lambda_\ell \langle  \cdot , e_{\ell} \rangle_{\HK} e_{\ell}$, where $(e_{\ell})_{\ell=1}^\infty$ is a basis of $\mathrm{Ker}(T)^\perp$, the eigenvalues of $T$ satisfy that $\lambda_{\ell} \ell^b$ is lower and upper bounded as $\ell \to \infty$.  
\end{enumerate}
\end{condition}
Condition \ref{cond:P:b:c:class} corresponds to the class $\mathcal{P}(b,c)$ in \cite{szabo2016learning}. 
Intuitively, the hardness of the regression problem decreases with $b$ and $c$. Indeed, an increased $c$ can be interpreted as a less complex function $f^{\star}$, and an increased $b$ corresponds to a smaller effective dimension of $\HK$, with respect to the distribution $\cL$, see \cite{caponnetto2007optimal}.

Under Condition \ref{cond:P:b:c:class}, the minimax rate for estimating $f^{\star}$ is $n^{-\frac{bc}{2(bc+1)}}$  and is reached by  $\hat{f}_n$ for an appropriate choice of the ridge parameter $\lambda$ \cite[Thm.~1]{caponnetto2007optimal}. Note that \cite{caponnetto2007optimal}, and then also
\cite{szabo2016learning}, write this minimax rate as $n^{-\frac{bc}{bc+1}}$, because they measure the estimation error with a squared norm (as they consider the excess quadratic risk, see Section \ref{subsection:distribution:regression}).  

Hereafter we apply Theorem \ref{theorem:error:bound} to determine a sufficiently large order of magnitude of the number of samples $N$ (as a function of $n$), for $\hat{f}_{n,N}$ to  achieve the same (minimax) convergence rate as $\hat{f}_n$.
\textcolor{black}{
\begin{theorem} \label{theorem:general:minimax}
Let $n,N \to \infty$ and $\lambda \to 0$.
Assume that Conditions \ref{cond:separable}, \ref{condition:near:unbias} and \ref{cond:P:b:c:class} (with the constants $b$ and $c$) and the following hold: 
 \begin{enumerate}
     \item  There exists a constant $Y_{\max}$ such that, almost surely, $|Y_{i}| \leq Y_{\max}$.
     \item 
     $\lambda n^{\frac{b}{bc+1}}$ is lower and upper bounded and $N / n^a$ is lower bounded, where 
     $$ a=\begin{cases}
\max\left(
\frac{b + \frac{bc}{2}}{bc+1}
,
\frac{2b -1}{bc+1}
,
\frac{
4b - bc - 2
}{
bc+1
}
\right)
\le 1
~ ~ ~ & 
    {\rm if}\ b(1 - \frac{c}{2}) \leq \frac{3}{4}\\
\max\left( \frac{b + \frac{bc}{2}}{bc+1} , \frac{2b -\frac{1}{2}}{bc+1} \right)
> 1
~ ~ ~ & 
    {\rm if}\ b(1 - \frac{c}{2}) > \frac{3}{4}
 \end{cases}.$$
 \end{enumerate}
Then, we have
\[
\sqrt{
\int_{\HH}
\left( 
f^{\star}(x)
-
\hat{f}_{n,N}(x)
\right)^2
\dd \mathcal{L}(x)
}
=
\mathcal{O}_{\bP}
\left( 
n^{-\frac{bc}{2(bc+1)}}
\right).
\]
\end{theorem}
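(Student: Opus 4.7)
The strategy is the triangle inequality
\[
\|f^\star - \hat{f}_{n,N}\|_{L^2(\mathcal{L})} \leq \|f^\star - \hat{f}_n\|_{L^2(\mathcal{L})} + \|\hat{f}_n - \hat{f}_{n,N}\|_{L^2(\mathcal{L})},
\]
handling the first summand by the minimax analysis of \cite{caponnetto2007optimal} and the second by Theorem~\ref{theorem:error:bound}. The assumption that $\lambda n^{b/(bc+1)}$ is lower and upper bounded is exactly the regime in which Theorem~1 of \cite{caponnetto2007optimal}, under Condition~\ref{cond:P:b:c:class}, delivers the rate $\|f^\star - \hat{f}_n\|_{L^2(\mathcal{L})} = \cO_{\bP}(n^{-bc/(2(bc+1))})$, which settles the first summand without further work.

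For the second summand, since $K(u,u) = 1$ for all $u$, the reproducing property with Cauchy--Schwarz gives $|f(u)| \leq \|f\|_{\HK}$, hence $\|f\|_{L^2(\mathcal{L})} \leq \|f\|_{\HK}$ for every $f \in \HK$, and it suffices to bound $\|\hat{f}_n - \hat{f}_{n,N}\|_{\HK}$ by taking unconditional expectation in Theorem~\ref{theorem:error:bound} and then invoking Markov's inequality. The quantity $Y_{\max,n}$ is bounded by the deterministic $Y_{\max}$ by assumption; for $c_n = \|\hat{f}_n\|_{\HK}$, the well-specified setting with $f^\star \in \HK$, combined with Condition~\ref{cond:P:b:c:class} and the prescribed $\lambda$, yields $\bE c_n = \cO(1)$ via the standard bias/variance analysis of \cite{caponnetto2007optimal} in the RKHS norm (the population regularizer $f_\lambda = (T+\lambda I)^{-1}Tf^\star$ satisfies $\|f_\lambda\|_{\HK} \leq \|f^\star\|_{\HK}$, $\|f_\lambda - f^\star\|_{\HK} = \cO(\lambda^{(c-1)/2}) \to 0$ under Condition~\ref{cond:P:b:c:class}(i), and $\bE \|\hat{f}_n - f_\lambda\|_{\HK}^2 = \cO(1/(n\lambda))$ is bounded for the prescribed $\lambda$). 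Securing this boundedness of $\bE c_n$ is the one technical ingredient not already explicit in the excerpt, and is the main obstacle I anticipate.

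The rest is an algebraic matching of each of the four summands of Theorem~\ref{theorem:error:bound} against the target $n^{-bc/(2(bc+1))}$ under $\lambda \asymp n^{-b/(bc+1)}$. The summands $1/(\lambda N)$ and $1/(\lambda\sqrt{nN})$ translate directly into $N \gtrsim n^{(b+bc/2)/(bc+1)}$ and $N \gtrsim n^{(2b-1)/(bc+1)}$ respectively. The summand $(1+\sqrt{N/n})^{-1}/(\lambda n)$ contributes no new constraint, being dominated by the first two (use $(1+\sqrt{N/n})^{-1} \leq 1$ when $N \leq n$ and $(1+\sqrt{N/n})^{-1} \leq \sqrt{n/N}$ when $N \geq n$). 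The fourth summand $(1+\sqrt{N/n})^{-1}/(\lambda^2 n\sqrt{N})$ splits into two regimes: when $N \leq n$ it is of order $1/(\lambda^2 n\sqrt{N})$ and demands $N \gtrsim n^{(4b-bc-2)/(bc+1)}$, whereas when $N \geq n$ the estimate $(1+\sqrt{N/n})^{-1} \leq \sqrt{n/N}$ gives $1/(\lambda^2 \sqrt{n}\, N)$, demanding $N \gtrsim n^{(2b-1/2)/(bc+1)}$. The threshold $b(1-c/2) = 3/4$ in the statement is exactly the value at which $(4b-bc-2)/(bc+1) = 1$: below it, all required exponents stay $\leq 1$, so the $N \leq n$ branch of the fourth term applies and $a$ equals the three-way maximum; above it, $N \geq n$ is forced, and one substitutes $(2b-1/2)/(bc+1)$, which automatically dominates $(2b-1)/(bc+1)$, giving the two-way maximum of the second case.
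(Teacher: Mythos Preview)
Your approach mirrors the paper's proof almost exactly: the same triangle inequality, the same invocation of \cite{caponnetto2007optimal} for $\|f^\star-\hat f_n\|_{L^2(\cL)}$, the same passage from Theorem~\ref{theorem:error:bound} to an unconditional $\cO_{\bP}$ bound via Markov, and the same case-splitting algebra at the threshold $b(1-c/2)=3/4$. Your treatment of the four summands is actually slightly cleaner than the paper's (you dispose of the third summand by direct domination rather than by a separate inequality check).

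The one genuine weak point is precisely the one you flag. Your claimed rate $\bE\|\hat f_n-f_\lambda\|_{\HK}^2=\cO(1/(n\lambda))$ is not correct as stated: the crude operator bound $\|(T_n+\lambda I)^{-1}\|\le 1/\lambda$ gives only $\cO(1/(n\lambda^2))$, which is \emph{not} bounded for all admissible $(b,c)$ under $\lambda\asymp n^{-b/(bc+1)}$; and the refined bound brings in the effective dimension $\cN(\lambda)$ (the leading variance term is $\cN(\lambda)/(n\lambda)$, not $1/(n\lambda)$), so Condition~\ref{cond:P:b:c:class}(ii) is genuinely needed, not just the source condition (i). The paper handles this by citing the RKHS-norm decomposition of $c_n^2$ worked out in \citet[Sect.~9.1]{szabo2016learning}, namely
\[
c_n^2=\cO_{\bP}\!\left(\frac{1}{\lambda^2 n^2}+\frac{\cN(\lambda)}{n\lambda}+\frac{\cB(\lambda)}{\lambda^2 n^2}+\frac{\cA(\lambda)}{\lambda^2 n}+\cB(\lambda)+1\right),
\]
and then plugging in $\cN(\lambda)=\cO(\lambda^{-1/b})$, $\cA(\lambda)=\cO(\lambda^c)$, $\cB(\lambda)=\cO(\lambda^{c-1})$ from \citet[Prop.~3]{caponnetto2007optimal} to verify each term is $\cO(1)$ under the prescribed $\lambda$. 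Once you replace your heuristic by this (or an equivalent) argument, your proof is complete and coincides with the paper's.
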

}
In the above theorem, the sufficient order of magnitude of $N$ for $\hat{f}_{n,N}$ to achieve the minimax convergence rate
is $n^a$.
When the theorem is applied to mean embeddings (Section~\ref{subsection:application:mean:embedding})
and the sliced Wasserstein distance
(Section \ref{subsection:application:sliced:wasserstein}), this order $n^a$ is strictly smaller than the orders previously provided by the state-of-the-art references \cite{szabo2015two,szabo2016learning,meunier2022distribution}.

\begin{remark} \label{rem:inference:on:fn}
\textcolor{black}{A question which goes beyond our results in Section~\ref{section:general:hilbert} is that of obtaining statistical tests or confidence regions on the unknown $\hat{f}_{n}$ or $f^{\star}$, based on the observed $\hat{f}_{n,N}$. 
A simple example would be the test of a null hypothesis where $\hat{f}_{n}$ or $f^{\star}$ is a constant or the zero function.
A potential approach toward this goal would be to derive limiting distribution results on $\hat{f}_{n,N} - \hat{f}_{n}$ or $\hat{f}_{n,N} - f^{\star}$. We leave this challenging problem open for future work.}

\textcolor{black}{
We note that in the related topic of functional data analysis (see Section~\ref{subsection:review:FDA}), these tests or confidence regions do exist, see for instance \cite{cardot2003testing,Mller2005,kong2016classical}.}
\end{remark}

\section{Applications to various Hilbertian embeddings for distribution regression} \label{s:applitheorem}

\subsection{A Sinkhorn Hilbertian embedding over distributions}  \label{subsection:application:sinkhorn} 

The recent reference \citet{bachoc2023gaussian} constructs a new Hilbertian embedding on $\cPO$, based on the Sinkhorn distance. For the sake of completeness, we recall this construction. 
Initially, \citet{bachoc2020gaussian} suggest to express dissimilarities between distributions as dissimilarities between their optimal transport maps.
Then \citet{bachoc2023gaussian} extend this approach, but using the Sinkhorn's dual potentials rather than the transport maps, which yields strong computational benefits. 

First, \cite{bachoc2023gaussian} consider a fixed probability measure $\cU \in \cPO$, called a reference measure. They consider the Sinkhorn's (entropic regularized) optimal transport problem between other distributions and this reference one.
Then, they exploit the dual formulation of this problem, pointed out in \cite{genevay2019phd}, defining, for $\mu \in \cPO$, the optimization problem
\begin{equation}\label{dual_entrop}
	\begin{aligned}
		&   \sup_{h\in \cL^1(\mu),g\in \cL^1(\cU)} ~ ~ ~
  \int_{\Omega}  h(x)
		\dd \mu(x)+ \!\int_{\Omega} g(y) \dd \cU(y)\\ 
		& \hspace{50mm} - \!\epsilon \int_{\Omega \times \Omega} e^{\frac{1}{\epsilon} \left({h(x)+g(y)- \frac{1}{2}\|x-y\|^2}\right)} \dd \mu(x) \dd \cU(y).
	\end{aligned}
\end{equation}
Above, $\epsilon >0$ is a regularization parameter, that is fixed throughout Section \ref{subsection:application:sinkhorn}.  
 Problem~\eqref{dual_entrop} enables  \cite{bachoc2023gaussian} to define $g^{\mu}$ as the value of $g^{\star}$ where $(h^{\star},g^{\star})$ is the unique maximizer in \eqref{dual_entrop} for which 
 $g^{\star}$ is centered with respect to $\cU$. 

Note that in practice, the minimization of \eqref{dual_entrop} is achieved using the Sinkhorn's algorithm and that
 several toolboxes have been developed to compute regularized optimal transport such among others as \cite{flamary2017pot} for \texttt{Python}, \cite{klatt2017package} for \texttt{R}, making all computations feasible. We refer to \cite{peyre2019computational} and references therein for further details.

\citet{bachoc2023gaussian} suggest, among others, the following kernel  $K$ 
 defined by
 $$ \cPO\times \cPO\ni (\mu , \nu)\mapsto K( \mu , \nu )
=
F
\left( 
\|g^{\mu}-g^{\nu}\|_{\cL^2(\mathcal{U})}
\right), $$
 $\mu , \nu \in \cPO$,
 where we recall that $F(t) = e^{-t^2}$. 
Note that $\|g^{\mu}-g^{\nu}\|_{\cL^2(\mathcal{U})}$ is well-defined and finite as pointed out in this reference.
This fits to the general Hilbertian embedding framework of Section \ref{subsection:hilbertian:embedding}, with the Hilbert space $\HH = \cL^2(\mathcal{U})$ and the embedding $x_{\mu} = g^{\mu}$ for $\mu \in \cPO$.
In particular, $\HH$ is separable as assumed in Condition \ref{cond:separable}.

We will show how Theorem \ref{theorem:general:minimax} can be applied to this Sinkhorn Hilbertian embedding.
Recall that we consider i.i.d. (unobserved) distributions $(\mu_i)_{i=1}^n$ defined on $\cPO$ and the corresponding (observed) output variables $(Y_i)_{i=1}^n$. The Hilbertian embeddings of $(\mu_i)_{i=1}^n$ are $(x_i)_{i=1}^n$.
Also, we observe random variables  $(X_{i,j})_{i=1,\ldots,n,j=1,\ldots,N}$ with $(X_{i,j})_{j=1}^N$ distributed as $\mu_i$. We thus let $\mu_i^N = (1/N) \sum_{j=1}^N \delta_{X_{i,j}}$, for $i=1,\ldots,n$, so that we define $x_{N,i}~=~g^{\mu_i^N}$.

We first prove in the following lemma that Condition \ref{condition:near:unbias} is satisfied, using results from \cite{gonzalez2022weak}.
Note that this lemma could be extended by replacing the quadratic cost by more general ones in the exponential in \eqref{dual_entrop} \citep{gonzalez2023weak}. 
However, note that the basic properties of kernels, such as universality, or of the associated Gaussian processes, such as sample continuity,
based on these more general costs, are currently unknown.
Hence, stating explicitly an extended version of this lemma is a prospect for future work.

\begin{lemma} \label{lemma:near:unbias}
Assume that Condition \ref{cond:Omega} holds.
For all $s >0$,
there is a constant $c = c(\Omega,\epsilon,s)$ such that the following holds. 
For $\mu  \in \mathcal{P}(\Omega)$, let $X_1 , \ldots , X_N$ be i.i.d. with distribution $\mu$ and let $\mu^N = (1/N) \sum_{j=1}^N \delta_{X_j}$. 
Then, there are random elements $a_N, b_N \in \mathcal{L}^2(\mathcal{U})$ (functions of $X_1 , \ldots , X_N$) such that
\[
g^{\mu^N}
-
g^{\mu}
=
a_N + b_N,
\]
\textcolor{black}{where $\bE 
\left[
\|a_N\|_{\mathcal{L}^2(\mathcal{U})}^s
\right]
\leq 
\frac{c}{N^{s/2}}, $ $\bE 
\left[
\|b_N\|_{\mathcal{L}^2(\mathcal{U})}^s
\right] \le \frac{c}{N^s}$
and  $\bE
\left[
\langle h ,  a_N \rangle_{\mathcal{L}^2(\mathcal{U})}
\right]
= 0$ for all $h \in \mathcal{L}^2(\mathcal{U})$.}
\end{lemma}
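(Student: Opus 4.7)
The plan is to perform a first-order Taylor-type expansion of the map $\mu \mapsto g^{\mu}$ about the true distribution $\mu$, and to set $a_N$ equal to the linear part in $\mu^N - \mu$ while $b_N$ absorbs the quadratic remainder. Since fixing $\epsilon > 0$ renders \eqref{dual_entrop} strongly concave and smooth, and $\Omega$ is compact, the implicit function theorem applied to the first-order optimality equations produces a bounded linear map $D_\mu \colon \mathcal{M}(\Omega) \to \mathcal{L}^2(\mathcal{U})$ (with $\mathcal{M}(\Omega)$ the space of finite signed Borel measures) satisfying, for small signed perturbations $\nu$,
\[
g^{\mu+\nu} - g^{\mu} \;=\; D_\mu \nu + r_\mu(\nu),\qquad \|r_\mu(\nu)\|_{\mathcal{L}^2(\mathcal{U})} \le C\,\|\nu\|_*^2,
\]
in an appropriate dual (weak) norm $\|\cdot\|_*$. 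Quantitative expansions of exactly this type, with constants uniform over $\mu \in \cPO$ by the compactness coming from Condition~\ref{cond:Omega}, are the subject of \cite{gonzalez2022weak}.

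With this expansion in hand, the first step is to set
\[
a_N \;=\; D_\mu(\mu^N-\mu) \;=\; \frac{1}{N}\sum_{j=1}^{N} D_\mu(\delta_{X_j}-\mu),\qquad b_N \;=\; g^{\mu^N}-g^{\mu}-a_N,
\]
so the decomposition $g^{\mu^N}-g^{\mu}=a_N+b_N$ is automatic. Centeredness of $a_N$ is immediate from linearity of $D_\mu$ and Fubini: for any $h\in \mathcal{L}^2(\mathcal{U})$,
\[
\bE\bigl[\langle h, a_N\rangle_{\mathcal{L}^2(\mathcal{U})}\bigr]
= \bigl\langle h, D_\mu\,\bE[\delta_{X_1}-\mu]\bigr\rangle_{\mathcal{L}^2(\mathcal{U})} = 0.
\]
For the moment bound on $a_N$, compactness of $\Omega$ makes the random elements $D_\mu(\delta_{X_j}-\mu)$ uniformly bounded in $\mathcal{L}^2(\mathcal{U})$, with bound depending only on $\Omega$ and $\epsilon$. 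A Marcinkiewicz--Zygmund/Rosenthal inequality for sums of i.i.d.\ centered bounded random elements in a Hilbert space then yields $\bE[\|a_N\|_{\mathcal{L}^2(\mathcal{U})}^s] \le c/N^{s/2}$ for every $s>0$.

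The main obstacle is the $s$-th moment bound on $b_N$. From the quadratic remainder estimate one has $\|b_N\|_{\mathcal{L}^2(\mathcal{U})} \le C\,\|\mu^N-\mu\|_*^2$, and the task reduces to showing $\bE[\,\|\mu^N-\mu\|_*^{2s}\,]=O(N^{-s})$ in precisely the dual norm in which the remainder is controlled. This is where I would invoke the empirical-process machinery of \cite{gonzalez2022weak}: the dual norm arising from the implicit-function analysis is of Sobolev / bounded-smoothness type on $\Omega$, and uniform covering/chaining bounds (available thanks to Condition~\ref{cond:Omega}) deliver the required moment control. The delicate point — and the one I expect to require the most care — is reconciling the norm in which the second-order expansion yields the $\|\nu\|_*^2$ rate with the norm in which the empirical measure satisfies $\bE\|\mu^N-\mu\|_*^{2s}=O(N^{-s})$, and verifying that the constants there can be taken uniform over $\mu \in \cPO$.

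Finally, one assembles the pieces: the uniformity of the constants in $\mu$, together with the boundedness and centeredness just shown, gives the three stated inequalities with a single constant $c=c(\Omega,\epsilon,s)$, completing the verification of the near-unbiased condition for the Sinkhorn Hilbertian embedding.
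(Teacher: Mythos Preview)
Your proposal is correct and follows essentially the same route as the paper: define $a_N$ as the linear part $D_\mu(\mu^N-\mu)$ of a first-order expansion of $\mu\mapsto g^\mu$, let $b_N$ be the quadratic remainder, and control both via moment bounds on $\mu^N-\mu$ in a suitable dual norm. The paper makes your ``delicate point'' concrete by taking $\|\cdot\|_*=\|\cdot\|_p$ with $p>d$ (dual to a $\mathcal{C}^p(\Omega)$-norm), writing $D_\mu=\mathcal{B}\mathcal{A}$ with $\mathcal{A}$ an explicit smoothing operator and $\mathcal{B}$ bounded, and invoking \cite{gonzalez2022weak,carlierdifferential2020} for the quadratic remainder together with \cite[Eq.~4.13]{Barrio2022AnIC} for $\bE\|\mu^N-\mu\|_p^{2s}\le C/N^s$; the only minor deviation is that the paper also bounds $a_N$ through $\|\mu^N-\mu\|_p$ rather than via a Rosenthal-type inequality as you suggest.
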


Then, we have the following straightforward corollary of
Lemma \ref{lemma:near:unbias} and
Theorem \ref{theorem:general:minimax}. Recall that, from Section \ref{s:intro},
for a measure $\mu$, and for $f \in \HK$, we conveniently write \linebreak $f( \mu ) = f( x_{\mu} )$, that is we identify functions on $\HH$ (restricted to the image set $\{x_{\mu} ;  \mu \in \cPO \}$) with functions on $\cPO$. 
Recall also that we write $f^{\star} (\mu) = \bE [  Y_i | \mu_i = \mu ]$.
We recall $\hat{f}_n$ and $\hat{f}_{n,N}$, defined in Section \ref{s:intro}. Both functions can thus be seen as regression functions on $\cPO$.

\textcolor{black}{
\begin{corollary}    \label{corollary:sinkhorn:minimax}
Assume that Conditions \ref{cond:Omega} and  \ref{cond:P:b:c:class} (with the constants $b$ and $c$ and $\HH = \cL^2(\mathcal{U})$) hold.
Let $n$, $N$, $\lambda$, $a$ and $Y_{\max}$ be as in Theorem \ref{theorem:general:minimax}.
Then, we have 
\[
\sqrt{
\int_{\cPO}
\left( 
f^{\star}(\mu)
-
\hat{f}_{n,N}(\mu)
\right)^2
\dd \mathcal{L}(\mu)
}
=
\mathcal{O}_{\bP}
\left( 
n^{-\frac{bc}{2(bc+1)}}
\right),
\]
where $\mathcal{L}$ is the distribution of $\mu_i$. 
\end{corollary}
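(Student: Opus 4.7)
The plan is to verify that every hypothesis of Theorem~\ref{theorem:general:minimax} is satisfied in the Sinkhorn setting, and then read off its conclusion after a trivial change of variables to rewrite the integral over $\HH = \cL^2(\cU)$ as an integral over $\cPO$. Since the corollary explicitly inherits Condition~\ref{cond:P:b:c:class}, the $\lambda$/$N$ growth relations, and the boundedness of $Y_i$ from Theorem~\ref{theorem:general:minimax}, the only nontrivial thing to check is Conditions~\ref{cond:separable} and~\ref{condition:near:unbias}.

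First, Condition~\ref{cond:separable} is immediate: $\cU \in \cPO$ is a Borel probability measure on a compact (hence separable) subset $\Omega \subset \R^d$, so $\cL^2(\cU)$ is a separable Hilbert space.

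The core of the work is verifying Condition~\ref{condition:near:unbias}. For each $i = 1,\ldots,n$, I would apply Lemma~\ref{lemma:near:unbias} to the measure $\mu_i$ and its empirical counterpart $\mu_i^N$ formed from the i.i.d.\ sample $X_{i,1},\ldots,X_{i,N}$. This produces random elements $a_{N,i}, b_{N,i} \in \cL^2(\cU)$ with
\[
x_{N,i} - x_i \;=\; g^{\mu_i^N} - g^{\mu_i} \;=\; a_{N,i} + b_{N,i},
\]
satisfying, for every $s>0$, the moment bounds $\bE[\|a_{N,i}\|^s_{\cL^2(\cU)} \mid \mu_i] \leq c_s N^{-s/2}$ and $\bE[\|b_{N,i}\|^s_{\cL^2(\cU)} \mid \mu_i] \leq c_s N^{-s}$, together with the centering relation $\bE[\langle h, a_{N,i}\rangle_{\cL^2(\cU)}\mid \mu_i]=0$ for every fixed $h \in \cL^2(\cU)$. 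The only subtlety is upgrading these \emph{marginal} properties to the \emph{joint} conditional statement required in Condition~\ref{condition:near:unbias}: given $(\mu_i,Y_i)_{i=1}^n$, the second-stage samples $(X_{i,j})_{j=1}^N$ are independent across $i$ by the construction of the two-stage sampling scheme, and since $a_{N,i}$ and $b_{N,i}$ are measurable functions of $\mu_i$ and $(X_{i,j})_{j=1}^N$ alone, the triplets $(a_{N,i},b_{N,i})_{i=1}^n$ are conditionally independent and each moment/centering bound passes from Lemma~\ref{lemma:near:unbias} to the conditional expectation $\bE_n$ used in Condition~\ref{condition:near:unbias}.

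With all hypotheses in place, Theorem~\ref{theorem:general:minimax} yields
\[
\sqrt{\int_{\HH}\bigl(f^{\star}(x) - \hat{f}_{n,N}(x)\bigr)^2 \dd \mathcal{L}(x)}
\;=\; \mathcal{O}_{\bP}\!\left(n^{-\frac{bc}{2(bc+1)}}\right),
\]
where $\mathcal{L}$ is the law of $x_i = g^{\mu_i}$. The final step is purely notational: under the convention $f(\mu) = f(x_\mu)$ adopted in Section~\ref{s:intro} and the convenient abuse of writing $\mathcal{L}$ for the common law of both $\mu_i$ and $x_i$, this integral coincides with $\int_{\cPO}(f^{\star}(\mu) - \hat{f}_{n,N}(\mu))^2 \dd\mathcal{L}(\mu)$ via the pushforward by $\mu \mapsto g^{\mu}$, giving the stated bound. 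The only real obstacle in the whole argument is the conditional independence upgrade described above, and this is genuinely routine.
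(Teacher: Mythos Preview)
Your proposal is correct and follows exactly the route indicated in the paper, which simply states that the result is a ``straightforward corollary of Lemma~\ref{lemma:near:unbias} and Theorem~\ref{theorem:general:minimax}.'' You have filled in the details the paper omits: separability of $\cL^2(\cU)$, the passage from Lemma~\ref{lemma:near:unbias} (stated for a single measure $\mu$) to the full Condition~\ref{condition:near:unbias} via conditional independence of the second-stage samples across $i$, and the notational identification of the integral over $\HH$ with the integral over $\cPO$.
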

}

Note that Corollary \ref{corollary:sinkhorn:minimax} provides the first consistency result with a  rate of convergence for the  ridge regression based on the (recent) Sinkhorn-based kernel in \cite{bachoc2023gaussian}, with or without noisy observations of the inputs $\mu_1 , \ldots , \mu_n$.

\subsection{Mean embedding} \label{subsection:application:mean:embedding}
We prove that the standard mean embedding studied in \cite{szabo2015two,szabo2016learning} falls into the scope of our results.
We consider a continuous kernel $k$ on $\Omega$ and we let $\HH_k$ be its RKHS.
For $\mu \in \cPO$, let $x_{\mu} \in \HH_k$ be defined by, for $t \in \Omega$, 
\[
x_{\mu}(t)
=
\int_{\Omega}
k(u,t) 
\dd \mu (u).
\]
With this definition of $(x_{\mu})_{\mu \in \cPO}$,
the general setting of Section \ref{subsection:hilbertian:embedding} applies, with $\HH = \HH_k$. In particular, since $k$ is continuous, $\HH$ is separable.   
Then, Condition \ref{condition:near:unbias} 
is simply shown to hold, as here it holds the stronger property that $x_{N,i} - x_i$ is exactly unbiased.

\begin{lemma} \label{lemma:near:unbiased:mean:embedding}
Assume that Condition \ref{cond:Omega} holds.
Then, Condition \ref{condition:near:unbias} holds, with $a_{N,i} = x_{N,i} - x_i$ and 
$b_{N,i} = 0$. 
\end{lemma}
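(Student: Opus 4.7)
The plan is to verify each clause of Condition~\ref{condition:near:unbias} directly, using the observation that, conditional on the first-stage sample $(\mu_i,Y_i)_{i=1}^n$, the quantity $x_{N,i}-x_i$ is an empirical average of i.i.d.\ centered bounded random elements of $\HH_k$. Concretely, set $Z_{i,j}:=k(X_{i,j},\cdot)-x_i\in \HH_k$, so that
\[
a_{N,i}=x_{N,i}-x_i=\frac{1}{N}\sum_{j=1}^N Z_{i,j}.
\]
Conditional on $(\mu_i,Y_i)_{i=1}^n$, the variables $(Z_{i,j})_{j=1}^N$ are i.i.d.\ (with common law depending only on $\mu_i$), and the $n$ triples $(a_{N,i},0)$ are conditionally independent across $i$ because the samples $(X_{i,j})_{j,i}$ are conditionally independent across $i$ given $(\mu_i)_{i=1}^n$.

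First I would handle the centering \eqref{eq:general:cond:aNi:unbiased}. By the reproducing property and the definition of $x_{\mu_i}$, for any fixed $h\in\HH_k$,
\[
\bE_n\langle h,\,k(X_{i,j},\cdot)\rangle_{\HH_k}
= \bE_n[h(X_{i,j})]
= \int_\Omega h(u)\,d\mu_i(u)
= \langle h,\,x_i\rangle_{\HH_k},
\]
so $\bE_n\langle h, Z_{i,j}\rangle_{\HH_k}=0$; averaging over $j$ gives $\bE_n\langle h, a_{N,i}\rangle_{\HH_k}=0$, which is exactly \eqref{eq:general:cond:aNi:unbiased} with $x=h$.

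Next I would handle the moment bound \eqref{eq:general:cond:aNi}. Condition~\ref{cond:Omega} and continuity of $k$ give $M:=\sup_{u\in\Omega}\sqrt{k(u,u)}<\infty$, hence $\|k(u,\cdot)\|_{\HH_k}\le M$ for all $u\in\Omega$, and by Jensen $\|x_i\|_{\HH_k}\le M$ as well, so $\|Z_{i,j}\|_{\HH_k}\le 2M$ almost surely. For $s\ge 2$, a Hilbert-space Marcinkiewicz--Zygmund / Rosenthal inequality for sums of conditionally i.i.d.\ centered bounded random vectors yields a constant $C_s$ (depending only on $s$) such that
\[
\bE_n\!\left[\left\|\tfrac{1}{N}\sum_{j=1}^N Z_{i,j}\right\|_{\HH_k}^{s}\right]
\le \frac{C_s\,(2M)^s}{N^{s/2}}.
\]
For $s\in(0,2)$, Lyapunov's inequality applied to the $s=2$ case yields the same $N^{-s/2}$ rate. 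Taking $c_s:=C_s(2M)^s$ (or the Lyapunov derivate thereof) completes \eqref{eq:general:cond:aNi}, while \eqref{eq:general:cond:bNi} is trivial since $b_{N,i}=0$.

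The only non-routine step is choosing a single convenient form of the Hilbert-valued Marcinkiewicz--Zygmund inequality that covers all $s>0$ in one stroke; beyond that, the argument is a direct, short unwinding of definitions. The main obstacle is therefore essentially bookkeeping, reflecting the fact that mean embedding is a strictly easier instance of the near-unbiased framework than the Sinkhorn or sliced-Wasserstein cases, where a genuine bias term $b_{N,i}$ must be introduced and separately controlled.
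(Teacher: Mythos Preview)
Your proof is correct and follows essentially the same approach as the paper: both verify the centering \eqref{eq:general:cond:aNi:unbiased} via the reproducing property and then exploit that $a_{N,i}$ is an average of conditionally i.i.d.\ centered, uniformly bounded elements of $\HH_k$ to get the $N^{-s/2}$ moment rate. The only cosmetic difference is that the paper invokes a Bernstein-type tail bound from \citet[Prop.~2]{caponnetto2007optimal} and integrates the tail, whereas you appeal to a Hilbert-valued Rosenthal/Marcinkiewicz--Zygmund inequality directly; both routes are standard and interchangeable here.
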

\textcolor{black}{
In the proof of Lemma \ref{lemma:near:unbiased:mean:embedding} in the Appendix, 
note that it is relatively immediate to show \eqref{eq:general:cond:aNi:unbiased} in  Condition \ref{condition:near:unbias} (similarly, it is clear that $\mathbb{E}_n[x_{\mu_{i}^N}(t)] = x_{\mu_i}(t)$ for all $t \in \Omega$, with $\bE_n$ as in that condition). The proof of \eqref{eq:general:cond:aNi} is also quite short, since $x_{\mu_{i}^N} = (1/N)\sum_{j=1}^N 
k(X_{i,j},\cdot)$ is an average of i.i.d. elements, conditionally to $\mu_i$.
}

\textcolor{black}{Hence, Condition~\ref{condition:near:unbias} holds for the mean embedding, so that 
Theorem \ref{theorem:general:minimax} applies.   As in Section \ref{subsection:application:sinkhorn}, 
for a measure $\mu$, and for $f \in \HK$, we write $f( \mu ) = f( x_{\mu} )$ and $f^{\star} (\mu) = \bE [  Y_i | \mu_i~=~\mu ]$. 
}
\textcolor{black}{
\begin{corollary}    \label{corollary:mean:embedding:minimax}
Assume that Conditions \ref{cond:Omega} and  \ref{cond:P:b:c:class} (with the constants $b$ and $c$ and $\HH = \HH_k$) hold.
Let $n$, $N$, $\lambda$, $a$ and $Y_{\max}$ be as in Theorem \ref{theorem:general:minimax}. 
Then, we have 
\[
\sqrt{
\int_{\cPO}
\left( 
f^{\star}(\mu)
-
\hat{f}_{n,N}(\mu)
\right)^2
\dd \mathcal{L}(\mu)
}
=
\mathcal{O}_{\bP}
\left( 
n^{-\frac{bc}{2(bc+1)}}
\right),
\]
where $\mathcal{L}$ is the distribution of $\mu_i$. 
\end{corollary}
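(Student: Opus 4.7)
The plan is to assemble this corollary as essentially a direct consequence of Theorem~\ref{theorem:general:minimax} together with Lemma~\ref{lemma:near:unbiased:mean:embedding}, once all the hypotheses of the theorem have been verified in the present mean-embedding setting. So first I would check that the general Hilbert-space framework of Section~\ref{section:general:hilbert} applies. The Hilbert space here is $\HH = \HH_k$, the RKHS of the continuous kernel $k$ on the compact set $\Omega$. Because $k$ is continuous on $\Omega \times \Omega$ with $\Omega$ compact, $\HH_k$ is a separable Hilbert space (it is generated by $\{k(\cdot,t) : t \in \Omega\}$, and one can extract a countable dense subfamily using a dense countable subset of $\Omega$). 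Hence Condition~\ref{cond:separable} is in place.

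Next I would invoke Lemma~\ref{lemma:near:unbiased:mean:embedding}, which is stated exactly so as to establish Condition~\ref{condition:near:unbias} for mean embeddings (in fact in the stronger form with $b_{N,i}=0$ and exact unbiasedness of $a_{N,i} = x_{N,i} - x_i$). With Condition~\ref{condition:near:unbias} verified, and Condition~\ref{cond:P:b:c:class} assumed in the statement, all the structural hypotheses of Theorem~\ref{theorem:general:minimax} are met. The two quantitative hypotheses of Theorem~\ref{theorem:general:minimax} (boundedness of $|Y_i|$ by $Y_{\max}$ and the regimes on $\lambda$ and $N$) are inherited directly from the parameters $Y_{\max}$, $\lambda$, $a$ specified in the present corollary.

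I would then apply Theorem~\ref{theorem:general:minimax} to the i.i.d.\ triplets $(x_i, x_{N,i}, Y_i)_{i=1}^n$, with $x_i = x_{\mu_i}$ and $x_{N,i} = x_{\mu_i^N}$ obtained by the mean embedding. The theorem yields
\[
\sqrt{
\int_{\HH_k}
\bigl(
f^{\star}(x) - \hat{f}_{n,N}(x)
\bigr)^2
\dd \mathcal{L}_{\mathrm{push}}(x)
}
=
\mathcal{O}_{\bP}\bigl( n^{-\frac{bc}{2(bc+1)}} \bigr),
\]
where $\mathcal{L}_{\mathrm{push}}$ denotes the pushforward of the law $\mathcal{L}$ of $\mu_i$ under the embedding $\mu \mapsto x_\mu$. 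The final and only slightly delicate step is to translate this back to an integral over $\cPO$. Using the notational convention $f(\mu) = f(x_\mu)$ recalled just before the corollary (and the fact that both $f^\star$ and $\hat f_{n,N}$, viewed as elements of $\HH_K$, depend on $\mu$ only through $x_\mu$), a change of variables under the pushforward gives
\[
\int_{\cPO} \bigl(f^\star(\mu) - \hat f_{n,N}(\mu)\bigr)^2 \dd \mathcal{L}(\mu)
=
\int_{\HH_k} \bigl(f^\star(x) - \hat f_{n,N}(x)\bigr)^2 \dd \mathcal{L}_{\mathrm{push}}(x),
\]
which yields the stated bound. There is no genuine obstacle here: the work is essentially bookkeeping, with the only conceptual point being the identification of integrals over distributions with integrals over their Hilbertian embeddings, made legitimate by the definition of $f^\star$ as a function of $\mu$ only through $x_\mu$ (well-specified case, $f^\star \in \HH_K$, as already assumed in Section~\ref{subsection:convergence:rates:hilbert}).
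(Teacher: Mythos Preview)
Your proposal is correct and matches the paper's own approach: the corollary is stated as an immediate consequence of Lemma~\ref{lemma:near:unbiased:mean:embedding} (verifying Condition~\ref{condition:near:unbias}) together with Theorem~\ref{theorem:general:minimax}, with separability of $\HH_k$ already noted in the text. The change-of-variables identification between the integral over $\cPO$ and over $\HH_k$ is exactly the notational convention the paper invokes.
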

}

Similarly as discussed for Theorem \ref{theorem:general:minimax}, $n^{-\frac{bc}{2(bc+1)}}$ is the minimax rate and we show that the order $N^a$ of samples is sufficient for $\hat{f}_{n,N}$ to reach it. 
The state of the art references \citet{szabo2015two,szabo2016learning} show that this minimax rate is reached by $\hat{f}_{n,N}$ when $N$ is of order at least $ n^{ \frac{b(c+1)}{bc+1} } $ (with an additional log factor, \textcolor{black}{which was later removed by \cite{Fang2020}}). 

It can be checked that the number of samples we require, $N^a$, is of strictly smaller order than $ n^{ \frac{b(c+1)}{bc+1} } $, for all values of $b,c$, which constitutes a strong improvement. 
In particular, in \citet{szabo2015two,szabo2016learning}, $N$ always needs to be of order strictly larger than $n$, while when $b(1 - \frac{c}{2}) \leq \frac{3}{4}$, Corollary \ref{corollary:mean:embedding:minimax} allows for $N$ to be of smaller or strictly smaller order than $n$, which constitutes a major  improvement in practice. 
For a typical example where $b=2$, $c=3/2$, we improve the necessary number of samples from $N \gtrsim n^{5/4}$ to $N \gtrsim n^{7/8}$.

\begin{remark} \label{rem:curse:dimension}
    \textcolor{black}{
  In Theorem~\ref{theorem:general:minimax}, the exponents in the rates $n^{-\frac{bc}{2(bc+1)}}$ and $n^a$ typically depend on the ambient dimension $d$ of the support space $\Omega$.
  Indeed, they are expressed from the constants $b$ and $c$ from Condition~\ref{cond:P:b:c:class} that typically depend on $d$. 
A general mathematical quantification of this dependence remains open for future work in kernel distribution regression \citep{szabo2015two,szabo2016learning,meunier2022distribution,Fang2020}. Nevertheless, intuitively, a larger $d$ is expected to yield a slower convergence rate $n^{-\frac{bc}{2(bc+1)}}$ for recovering $f^{\star}$, in particular by decreasing $b$ in Item 2 of Condition~\ref{cond:P:b:c:class} (slower eigenvalue decay of $T$).
It is thus expected that the distribution regression problem suffers from the curse of dimensionality.} 

    \textcolor{black}{Support for this intuition can be obtained by considering the simple case of a linear kernel $k(u,v) = u^\top v $ for the mean embedding. In this case, $x_{\mu}$ is the linear function $u \mapsto u^\top  \int_{\Omega} v \dd \mu (v) $ on $\Omega$. Thus, the input space for regression with the kernel $K$, $\{ x_{\mu} ; \mu \in \cPO  \}$, is included in a linear space of finite dimension $d$. With standard kernel regression on finite-dimensional linear spaces, it is usual that the convergence rate is negatively impacted by the ambient dimension, see for instance Corollaries~2 and 3 in~\citet{li2024towards} for a recent instance.}
     
     \textcolor{black}{
We also refer to Remark~\ref{rem:curse:dimension:two} for the impact of $d$ specifically on the Hilbertian embeddings in the two-stage sampling setting. Note finally 
 that, numerically, in Section \ref{subsection:ecological:simulated}, the performance of distribution regression decreases as the dimension increases.}
\end{remark}

\subsection{Embedding based on the sliced Wasserstein distance} 
\label{subsection:application:sliced:wasserstein}

We now consider the Hilbertian embedding based on the sliced Wasserstein distance \citep{kolouri2018sliced,manole2022minimax,meunier2022distribution}. 
For a real-valued random variable $X$, we write $F_X$ for its c.d.f. For a univariate probability distribution $\mu$, we let $F_{\mu} = F_X$ where $X$ is a random variable with distribution $\mu$. 
If $\mu$ is a distribution on $\mathbb{R}^d$, for a $d \times 1$ column vector $\theta$, we let $\mu_{\theta}$ be the distribution of $\theta^\top X$ where $X$ is a random column vector with distribution $\mu$. For a c.d.f. $G$, we use the usual definition $G^{-1}(t) = \inf \{ x \in \mathbb{R} , G(x) \geq t \}$, for $t \in [0,1]$. 

When $d=1$, we let $\Lambda$ be the Dirac probability measure at $1$ and when $d \geq 2$, we let $\Lambda$ be the uniform distribution on the unit sphere $\cS^{d-1}$ of $\mathbb{R}^d$. As a convention, we let $\cS^0 = \{ 1 \}$.
For $\epsilon \in [0,1/2)$, and for $\mu , \nu \in \cPO$, we define 
\begin{equation} \label{eq:sliced:wasserstein}
\SW(\mu,\nu)^2
=
\frac{1}{1-2\epsilon}
\int_{\cS^{d-1}} 
\int_{\epsilon}^{1- \epsilon}
\left(
F_{\mu_{\theta}}^{-1}(t)
-
F_{\nu_{\theta}}^{-1}(t)
\right)^2
\dd \Lambda (\theta)
\dd t, 
\end{equation}
where $1-2 \epsilon$ at the denominator is used to integrate with respect to a probability measure.
The quantity $\SW(\mu,\nu)$ is the trimmed sliced Wasserstein distance  
when $d \geq 2$ and $\epsilon >0$, see \cite{manole2022minimax}. 
When $d \geq 2$ and $\epsilon =0$, it is  the sliced Wasserstein distance, studied in particular in \cite{kolouri2018sliced,meunier2022distribution}. 
Finally when $d=1$, $\SW(\mu,\nu)$ is the trimmed Wasserstein distance for $\epsilon>0$ and the Wasserstein distance for $\epsilon = 0$. 

It is easily seen (see also Proposition 5 in \cite{meunier2022distribution} for $d \geq 2$ and $\epsilon = 0$) that $\SW(\mu,\nu)$ is a Hilbert norm with the following embedding of distributions. Let $\HH = \cL^2 ( \Lambda \times \cU( [\epsilon , 1-\epsilon] )  )$, where  $\cU( [\epsilon , 1-\epsilon])$ is the uniform distribution on $[\epsilon , 1-\epsilon]$. For $\mu \in \cPO$, define $x_{\mu} \in \HH$ by, for $\theta \in \cS^{d-1}$ and $t \in [\epsilon,1- \epsilon]$,
\[
x_{\mu} ( \theta , t )
=
F_{\mu_{\theta}}^{-1} (t). 
\]

We recall the dataset $(\mu_i)_{i=1}^n$, $(Y_i)_{i=1}^n$ and $(X_{i,j})_{i=1,\ldots,n,j=1,\ldots,N}$
and the true and empirical Hilbertian embeddings  $(x_i,x_{N,i})_{i=1}^n$, similarly as in Sections \ref{subsection:application:sinkhorn} and
\ref{subsection:application:mean:embedding}.
Then, we will show that Condition \ref{condition:near:unbias} 
holds
under the following regularity assumption. 
\textcolor{black}{
\begin{condition} \label{cond:sliced:wasserstein}
There exist constants $c^{(2)}$ 
and $0 \leq \delta \leq \epsilon$, with $\delta < \epsilon$ if
$\epsilon >0$,
such that the following holds almost surely: 
\begin{enumerate}
    \item For every $\theta \in \cS^{d-1}$, there exist $a_i(\theta)$ and $ b_i( \theta )$ with   $- \infty < a_i(\theta) < b_i( \theta ) < \infty$ and such that
$F_{\mu_{i,\theta}}: (a_i(\theta) , b_i( \theta ))\to  (0,1 )$ is bijective.
\item Furthermore,
$F_{\mu_{i,\theta}}^{-1}$ is twice differentiable on $(\delta, 1- \delta)$ with first and second derivatives bounded in absolute value by $c^{(2)}$. 
\end{enumerate}
\end{condition}
}
In Condition \ref{cond:sliced:wasserstein}, $\mu_{i,\theta} = (\mu_i)_{\theta}$ with the above notation, so that as $\mu_i$ is random,  $a_i(\theta)$ and $b_i( \theta )$ are also allowed to be random. 
\textcolor{black}{The following result offers sufficient criteria for ensuring that Condition~\ref{cond:sliced:wasserstein} is satisfied.}
\textcolor{black}{
\begin{lemma} \label{lemma:example:sliced}
	When $d=1$, let $0 \leq \epsilon < 1/2$ and when $d \geq 2$,  let $0 < \epsilon < 1/2$.
	Assume that there are fixed $\tau >0$, $\kappa < \infty$ and $T < \infty$ such that the following holds almost surely: 
 \begin{enumerate}
     \item The support of $\mu_i$ is convex, is contained in $[- \kappa , 
 \kappa]^d$ and contains the Euclidean ball of radius $
	\tau$ centered at $0$.
 \item Furthermore, $\mu_i$ has a density on its support, taking values in $[1/T , T]$, and which is differentiable with gradient bounded by $T$ in Euclidean norm.
 \end{enumerate}
Then Condition \ref{cond:sliced:wasserstein} holds, with any $0 \leq \delta \leq  \epsilon$ ($\delta  < \epsilon$ if $\epsilon >0$) if $d=1$ and with any $0 < \delta < \epsilon$ if $d \geq 2$.
\end{lemma}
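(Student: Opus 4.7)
My plan is to fix a realization (all stated hypotheses hold almost surely) and a direction $\theta\in\cS^{d-1}$, and verify separately the two items of Condition~\ref{cond:sliced:wasserstein}. After rotating coordinates so that $\theta = e_1$, write $S_i$ for the support of $\mu_i$, $f_i$ for its density on $S_i$, and $S_i(t) = \{y\in\mathbb{R}^{d-1} : (t,y)\in S_i\}$ for the slice at height $t$. Two elementary facts drive the argument: the projection of the convex set $S_i$ onto $\mathbb{R}e_1$ is a convex interval $(a_i(\theta), b_i(\theta))$ satisfying $-\kappa\sqrt{d}\leq a_i(\theta) \leq -\tau$ and $\tau\leq b_i(\theta) \leq \kappa\sqrt{d}$ (from $B(0,\tau)\subset S_i\subset [-\kappa,\kappa]^d$), and the marginal density writes $f_{\mu_{i,\theta}}(t) = \int_{S_i(t)} f_i(t,y)\,dy$.

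Item 1 of Condition~\ref{cond:sliced:wasserstein} is immediate: since $f_i\geq 1/T$ on $S_i$ and $S_i(t)$ has positive $(d-1)$-volume for $t$ in the open interval (from convexity plus the ball inclusion), $f_{\mu_{i,\theta}}(t)>0$ there, so $F_{\mu_{i,\theta}}$ is continuous and strictly increasing, hence bijective from $(a_i(\theta), b_i(\theta))$ onto $(0,1)$.

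For Item 2, I use the standard identities $(F_{\mu_{i,\theta}}^{-1})'(u) = 1/f_{\mu_{i,\theta}}(F_{\mu_{i,\theta}}^{-1}(u))$ and $(F_{\mu_{i,\theta}}^{-1})''(u) = -f'_{\mu_{i,\theta}}(F_{\mu_{i,\theta}}^{-1}(u))/f_{\mu_{i,\theta}}(F_{\mu_{i,\theta}}^{-1}(u))^3$. The case $d=1$ is handled directly: the marginal is $\mu_i$ itself, so $f_{\mu_{i,\theta}}\in [1/T, T]$ and $|f'_{\mu_{i,\theta}}|\leq T$ on all of $(a_i(\theta), b_i(\theta))$, yielding the claim for any $\delta$ in the stated range. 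The case $d\geq 2$ proceeds in four steps, uniformly in $t\in [F_{\mu_{i,\theta}}^{-1}(\delta), F_{\mu_{i,\theta}}^{-1}(1-\delta)]$: (a) the upper bound $f_{\mu_{i,\theta}}(t)\leq T(2\kappa)^{d-1}$, following from $\mathrm{vol}_{d-1}(S_i(t))\leq (2\kappa)^{d-1}$ and $f_i\leq T$; (b) the distance-to-endpoint bound $F_{\mu_{i,\theta}}^{-1}(\delta) - a_i(\theta) \geq \delta/[T(2\kappa)^{d-1}]$ (and symmetrically at the right end), obtained by integrating the bound in (a) from $a_i(\theta)$ to $F_{\mu_{i,\theta}}^{-1}(\delta)$; (c) a positive lower bound on $f_{\mu_{i,\theta}}(t)$, using Brunn--Minkowski to see that $t\mapsto \mathrm{vol}_{d-1}(S_i(t))^{1/(d-1)}$ is concave on $[a_i(\theta), b_i(\theta)]$, vanishes at the endpoints, and is at least $c_d\tau$ at $t=0$ (since $S_i(0)\supset B_{d-1}(0,\tau)$), then concluding by linear interpolation from (b); (d) an upper bound on $|f'_{\mu_{i,\theta}}(t)|$, obtained by decomposing the derivative into an inner part $\int_{S_i(t)} (\partial_{x_1} f_i)(t,y)\,dy$ (controlled by $T(2\kappa)^{d-1}$ via the gradient bound on $f_i$) and a boundary part encoding the rate of change of $S_i(t)$ (controlled by the Lipschitz constant of $V(t)^{1/(d-1)}$ on the interior subinterval, which depends only on $\tau, \kappa, T, d, \delta$).

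I expect step (d) to be the main technical obstacle: rigorously differentiating the integral $f_{\mu_{i,\theta}}(t) = \int_{S_i(t)} f_i(t,y)\,dy$ whose integration domain varies Lipschitz-continuously, but not necessarily smoothly, with $t$. I would address this via a coarea/change-of-variables argument to write the boundary contribution as a surface integral, combined with concavity of $V^{1/(d-1)}$ to bound this contribution away from the endpoints. Once (a)--(d) are in place, all constants depend only on $\tau, \kappa, T, d, \delta$ (not on $\theta$ or $i$), so a single deterministic $c^{(2)}$ works and Condition~\ref{cond:sliced:wasserstein} holds in the announced range of $\delta$.
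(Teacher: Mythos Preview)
Your plan mirrors the paper's: fix $\theta$, rotate to $\theta=e_1$, and study the marginal density $g_1:=f_{\mu_{i,\theta}}$; Item~1 and the $d=1$ case go through identically. For the lower bound on $g_1$ (your step~(c)) you use Brunn--Minkowski concavity of $V^{1/(d-1)}$, whereas the paper builds an explicit convex hull of a small $(d{-}1)$-box with $B(0,\tau)$; both work. The real divergence is step~(d). The paper observes that, for fixed $x_1$, the set of $y$ where $x_1'\mapsto I_{\cK}(x_1',y)$ is not locally constant lies in the boundary of the slice (Lebesgue-null) and then invokes dominated convergence to differentiate under the integral, keeping only the inner term $\int I_{\cK}\,\partial_{x_1}h$. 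Your inner-plus-boundary decomposition is the right instinct---the boundary term is generically nonzero---but your formulation still presumes $g_1'(t)$ exists pointwise, and under the stated hypotheses it need not. With $d=2$, support $\{(t,y):-1\le t\le1,\ |y|\le\min(1,1-t)\}$ (convex, contains $B(0,1/2)$) and constant density $1/3$, one gets $g_1(t)=\tfrac23\min(1,1-t)$, which has a corner at $t=0$; hence $F^{-1}$ is not twice differentiable at $u=F(0)=2/3\in(\delta,1-\delta)$. So the literal ``twice differentiable'' conclusion of Condition~\ref{cond:sliced:wasserstein} cannot be reached from these hypotheses, by either route.

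What your argument does yield, once step~(d) is recast as a Lipschitz estimate, is a uniform Lipschitz bound on $g_1$ over $[F^{-1}(\delta),F^{-1}(1-\delta)]$, i.e.\ $(F^{-1})'$ Lipschitz with constant depending only on $\tau,\kappa,T,d,\delta$. Indeed the inner contribution to $g_1(t+s)-g_1(t)$ is $O(|s|)$ by the gradient bound on $f_i$, and the boundary contribution is at most $T\,|S_i(t)\triangle S_i(t+s)|$, which is $O(|s|)$ since, writing $\alpha(y),\beta(y)$ for the endpoints of the fiber $\{t:(t,y)\in S_i\}$, the maps $r\mapsto|\{y:\alpha(y)\le r\}|^{1/(d-1)}$ and $r\mapsto|\{y:\beta(y)\ge r\}|^{1/(d-1)}$ are concave (Brunn--Minkowski applied to the projections of $S_i\cap\{x_1\le r\}$ and $S_i\cap\{x_1\ge r\}$) and hence Lipschitz away from the endpoints---exactly where your step~(b) places you. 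This $C^{1,1}$ control is precisely what the second-order Taylor step in the proof of Lemma~\ref{lemma:near:unbiased:sliced:wasserstein} actually needs. So keep your plan, but aim step~(d) at a Lipschitz bound on $g_1$ rather than existence and boundedness of $g_1'$.
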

}
In Lemma \ref{lemma:example:sliced} in the case $d \geq 2$, the measure $\mu_{i,\theta}$ may have a zero density at the ends of its (convex) support, for some directions $\theta$. Indeed, this density at a point $x \in \mathbb{R}$ is obtained by a $d-1$-dimensional integration over a domain which volume can vanish when $x$ reaches these support ends, because of boundary effects. As a consequence, the inverse c.d.f. $F_{\mu_{i,\theta}}^{-1}$ may not be differentiable at these support ends.
This is why in Lemma \ref{lemma:example:sliced}, we consider $\delta >0$ for $d \geq 2$, and the proof shows in particular that the above volume at $x$ is lower-bounded when $x$ is bounded away from the support ends.
Next, we show that Condition \ref{condition:near:unbias}  holds

\begin{lemma}  \label{lemma:near:unbiased:sliced:wasserstein}
Assume that Conditions \ref{cond:Omega} and \ref{cond:sliced:wasserstein} hold.
Then, Condition \ref{condition:near:unbias} holds. 
\end{lemma}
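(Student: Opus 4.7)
The plan is as follows. Working conditionally on $\mu_i$, and letting $U_{i,j}(\theta) \defeq F_{\mu_{i,\theta}}(\theta^\top X_{i,j})$, Item~1 of Condition~\ref{cond:sliced:wasserstein} implies that these variables are i.i.d.\ uniform on $[0,1]$ conditionally on $\mu_i$, and that
$$F_{\mu_i^N,\theta}^{-1}(t) = F_{\mu_{i,\theta}}^{-1}\bigl(U_{(\lceil Nt \rceil)}(\theta)\bigr), \quad t \in (0,1],$$
where $U_{(1)}(\theta) \le \dots \le U_{(N)}(\theta)$ are the order statistics (using strict monotonicity of $F_{\mu_{i,\theta}}$ on the support of $\mu_{i,\theta}$). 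Consequently,
$$x_{N,i}(\theta,t) - x_i(\theta,t) = F_{\mu_{i,\theta}}^{-1}\bigl(U_{(\lceil Nt \rceil)}(\theta)\bigr) - F_{\mu_{i,\theta}}^{-1}(t).$$
Since $t \in [\epsilon, 1-\epsilon] \subset (\delta, 1-\delta)$ and Item~2 of Condition~\ref{cond:sliced:wasserstein} provides a $C^2$ bound with derivatives bounded by $c^{(2)}$, I would apply a second-order Taylor expansion on the event that $U_{(\lceil Nt \rceil)}(\theta) \in (\delta, 1-\delta)$:
$$x_{N,i}(\theta,t) - x_i(\theta,t) = (F_{\mu_{i,\theta}}^{-1})'(t)\bigl(U_{(\lceil Nt \rceil)}(\theta) - t\bigr) + R_N(\theta, t),$$
with $|R_N(\theta,t)| \le (c^{(2)}/2)\bigl(U_{(\lceil Nt \rceil)}(\theta) - t\bigr)^2$.

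I would then set
$$a_{N,i}(\theta,t) = (F_{\mu_{i,\theta}}^{-1})'(t)\bigl(U_{(\lceil Nt \rceil)}(\theta) - \bE_n[U_{(\lceil Nt \rceil)}(\theta)]\bigr)$$
and $b_{N,i} = (x_{N,i} - x_i) - a_{N,i}$. The variable $a_{N,i}$ is pointwise centered under $\bE_n$, so \eqref{eq:general:cond:aNi:unbiased} follows from Fubini (whose integrability hypothesis follows from Cauchy--Schwarz together with the pointwise variance bound below). For the moment bounds on $a_{N,i}$, I would use the classical fact that uniform order statistics satisfy $\bE_n[|U_{(k)} - \bE_n U_{(k)}|^s] \le C_s N^{-s/2}$ uniformly in $k \in \{1, \dots, N\}$ (via beta moments and a Rosenthal-type argument); combined with $|(F_{\mu_{i,\theta}}^{-1})'| \le c^{(2)}$, this yields $\bE_n[|a_{N,i}(\theta,t)|^s] \le C_s' N^{-s/2}$ pointwise. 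Similarly, since $\bE_n U_{(k)} = k/(N+1)$ and $|\lceil Nt\rceil/(N+1) - t| \le 1/N$, the deterministic part of $b_{N,i}$ is uniformly $O(1/N)$, while the quadratic remainder satisfies $\bE_n[|R_N(\theta,t)|^s] \le C_s'' N^{-s}$, once again by uniform order statistic moments. Integrating these pointwise bounds against the probability measure $\dd\Lambda(\theta)\otimes \dd t/(1-2\epsilon)$ and applying Jensen's inequality for $s \ge 2$ (and the concave bound $\bE\|\cdot\|^s \le (\bE\|\cdot\|^2)^{s/2}$ for $s < 2$) yields \eqref{eq:general:cond:aNi} and \eqref{eq:general:cond:bNi}. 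Conditional independence of $(a_{N,i}, b_{N,i})_{i=1}^n$ given $(\mu_i, Y_i)_{i=1}^n$ is inherited from the conditional independence across $i$ of the samples $(X_{i,j})_{j=1}^N$.

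The main obstacle is the exceptional event $E = \{U_{(\lceil Nt \rceil)}(\theta) \notin (\delta, 1-\delta)\}$, on which the Taylor expansion is not directly available. When $d \ge 2$, Condition~\ref{cond:sliced:wasserstein} enforces $\delta < \epsilon$, so Hoeffding's inequality applied to the binomial count $|\{j : U_{i,j}(\theta) \le \delta\}|$ gives $\bP_n(E) \le 2 e^{-c N}$ for some $c = c(\epsilon, \delta) > 0$; since $|x_{N,i}(\theta,t) - x_i(\theta,t)|$ is uniformly bounded by the diameter of $\Omega$ (finite by Condition~\ref{cond:Omega}), restricting the decomposition above to $E^c$ and absorbing an exponentially small correction into $b_{N,i}$ preserves \eqref{eq:general:cond:aNi} and \eqref{eq:general:cond:bNi}. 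In the case $d=1$ with $\delta = \epsilon = 0$, no such splitting is needed, since the smoothness is assumed on all of $(0,1)$ and the uniform order statistics lie in $(0,1)$ almost surely.
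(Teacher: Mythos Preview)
Your proof is correct and uses the same ingredients as the paper's: the probability integral transform to reduce to uniform order statistics, a second-order Taylor expansion of $F_{\mu_{i,\theta}}^{-1}$ on $(\delta,1-\delta)$, Beta-distribution moment bounds for $U_{(\lceil Nt\rceil)}$, and Hoeffding's inequality for the exceptional event. The one substantive difference is the choice of decomposition. The paper takes
\[
a_{N,i}(\theta,t)=F_{\mu_i^N,\theta}^{-1}(t)-\bE_n\bigl[F_{\mu_i^N,\theta}^{-1}(t)\bigr],\qquad
b_{N,i}(\theta,t)=\bE_n\bigl[F_{\mu_i^N,\theta}^{-1}(t)\bigr]-F_{\mu_{i,\theta}}^{-1}(t),
\]
so that $b_{N,i}$ is \emph{deterministic} and \eqref{eq:general:cond:bNi} reduces to a pure bias bound (of order $1/N$, obtained via the same Taylor argument you use), while the moment bound on $a_{N,i}$ then also requires the Taylor expansion. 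Your choice puts the linear Taylor term into $a_{N,i}$ and leaves $b_{N,i}$ random; this is equally valid and arguably more direct, at the cost of $b_{N,i}$ no longer being deterministic. Either way the same technical work is done.

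One small correction: the case split for the exceptional event should be on $\epsilon>0$ versus $\epsilon=0$, not on $d\ge 2$ versus $d=1$. Condition~\ref{cond:sliced:wasserstein} enforces $\delta<\epsilon$ whenever $\epsilon>0$ (for any $d$), and allows $\delta=\epsilon=0$ regardless of $d$; Lemma~\ref{lemma:near:unbiased:sliced:wasserstein} is stated under Condition~\ref{cond:sliced:wasserstein} alone, not under the additional hypotheses of Lemma~\ref{lemma:example:sliced}.
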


Condition \ref{cond:sliced:wasserstein} and Lemma \ref{lemma:near:unbiased:sliced:wasserstein} provide natural and convenient-to-express statements and enable to simply apply Theorem \ref{theorem:general:minimax} to (trimmed) sliced Wasserstein kernels next. We think that Lemma \ref{lemma:near:unbiased:sliced:wasserstein} could be extended to milder conditions than Condition \ref{cond:sliced:wasserstein}, where the main challenge would be to study finely the bias of empirical quantiles, beyond the current analysis in the proof of  Lemma \ref{lemma:near:unbiased:sliced:wasserstein}. A related work in this direction is \cite{portnoy2012nearly}. 
With Lemma~\ref{lemma:near:unbiased:sliced:wasserstein}, we obtain the following corollary, similar to Corollaries \ref{corollary:sinkhorn:minimax} and \ref{corollary:mean:embedding:minimax}. 
\textcolor{black}{
\begin{corollary}    \label{corollary:sliced:wasserstein:minimax}
Assume that Conditions \ref{cond:Omega},  \ref{cond:P:b:c:class} (with the constants $b$ and $c$ and $\HH = \cL^2 ( \Lambda \times \cU( [\epsilon , 1-\epsilon] )  )$) and \ref{cond:sliced:wasserstein}  hold.
Let $n$, $N$, $\lambda$, $a$ and $Y_{\max}$ be as in Theorem \ref{theorem:general:minimax}. 
Then, we have 
\begin{equation} \label{eq:rate:sliced}
	\sqrt{
		\int_{\cPO}
		\left( 
		f^{\star}(\mu)
		-
		\hat{f}_{n,N}(\mu)
		\right)^2
		\dd \mathcal{L}(\mu)
	}
	=
	\mathcal{O}_{\bP}
	\left( 
	n^{-\frac{bc}{2(bc+1)}}
	\right),
	\end{equation}
where $\mathcal{L}$ is the distribution of $\mu_i$.	
\end{corollary}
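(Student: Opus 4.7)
The plan is to obtain Corollary~\ref{corollary:sliced:wasserstein:minimax} as a direct application of Theorem~\ref{theorem:general:minimax} to the sliced Wasserstein embedding $\mu \mapsto x_{\mu}$, with $\HH = \cL^2(\Lambda \times \cU([\epsilon, 1-\epsilon]))$. The three hypotheses of Theorem~\ref{theorem:general:minimax} are Condition~\ref{cond:separable} (separability of $\HH$), Condition~\ref{condition:near:unbias} (near-unbiasedness of $x_{N,i} - x_i$), and Condition~\ref{cond:P:b:c:class} (the source/capacity conditions) together with almost sure boundedness of the $Y_i$'s and the stated conditions on $\lambda$ and $N$. All of these are either assumed in the corollary or can be checked in a few lines, so the work reduces to bookkeeping.

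First, I would verify Condition~\ref{cond:separable}: $\HH$ is an $L^2$ space of a finite product measure on a standard Borel product space ($\cS^{d-1} \times [\epsilon,1-\epsilon]$, or a singleton times an interval if $d=1$), hence separable. Second, Condition~\ref{condition:near:unbias} is exactly the conclusion of Lemma~\ref{lemma:near:unbiased:sliced:wasserstein}, whose hypotheses (Conditions~\ref{cond:Omega} and~\ref{cond:sliced:wasserstein}) are assumed. Third, Condition~\ref{cond:P:b:c:class} is assumed for the operator $T$ associated with $K$ and $\HH$, with constants $b,c$. The boundedness $|Y_i| \leq Y_{\max}$ and the scaling of $\lambda$ and $N$ are taken over verbatim from Theorem~\ref{theorem:general:minimax}.

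Applying Theorem~\ref{theorem:general:minimax} then yields
\[
\sqrt{
\int_{\HH}
\bigl(
f^{\star}(x)
-
\hat{f}_{n,N}(x)
\bigr)^2
\dd \mathcal{L}(x)
}
=
\mathcal{O}_{\bP}\!\left(
n^{-\frac{bc}{2(bc+1)}}
\right),
\]
where on the left $\mathcal{L}$ stands for the distribution of $x_i = x_{\mu_i}$. Finally, following the notational convention introduced in Section~\ref{subsection:hilbertian:embedding} (and recalled right above the statement of Corollary~\ref{corollary:mean:embedding:minimax}), we identify $f(\mu)$ with $f(x_{\mu})$ and use that $\mathcal{L}$ denotes both the law of $\mu_i$ on $\cPO$ and its pushforward on $\HH$ via the embedding. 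A change-of-variables then rewrites the $\HH$-integral above as the integral over $\cPO$ appearing in~\eqref{eq:rate:sliced}, concluding the proof.

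There is no genuinely hard step here, since the analytical content sits in Lemma~\ref{lemma:near:unbiased:sliced:wasserstein} (which transfers Condition~\ref{cond:sliced:wasserstein} into Condition~\ref{condition:near:unbias}) and in Theorem~\ref{theorem:general:minimax}; the only mild subtlety to watch is to keep $\mathcal{L}$ unambiguous when passing between $\cPO$ and $\HH$, and to check that the squared exponential kernel $K$ on $\HH$ from \eqref{eq:squared:exponential} is indeed the one used in defining $\hat{f}_{n,N}$, so that the RKHS $\HK$ and the operator $T$ entering Condition~\ref{cond:P:b:c:class} are the same objects appearing in Theorem~\ref{theorem:general:minimax}.
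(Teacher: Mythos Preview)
Your proposal is correct and matches the paper's own approach: the paper treats Corollary~\ref{corollary:sliced:wasserstein:minimax} as a direct consequence of Lemma~\ref{lemma:near:unbiased:sliced:wasserstein} and Theorem~\ref{theorem:general:minimax}, exactly as you outline, with the same notational identification between $\cPO$ and $\HH$ via the embedding.
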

}
Similarly as in Section \ref{subsection:application:mean:embedding}, Corollary \ref{corollary:sliced:wasserstein:minimax} significantly improves the state of the art provided in 
\cite{meunier2022distribution}
with respect to the number of samples $N$ required. 
Indeed, in \cite{meunier2022distribution}, Corollary 9 and the discussion after provide a convergence rate of the left-hand side of \eqref{eq:rate:sliced} of order $n^{-1/4} + N^{-1/8}$ in the setting where the order of magnitude of a quantity $\cN(\lambda)$ there is $1 / \lambda$ as $\lambda \to 0$. 
Thus from \cite{meunier2022distribution}, the rate $n^{-1/4}$ is reached for $N$ of order at least $n^2$. 

This quantity $\cN(\lambda)$ is the effective dimension in \citet{caponnetto2007optimal} and
can also be found in the proof of Theorem \ref{theorem:general:minimax} where it is of order $\lambda^{-1/b}$. 
Hence Theorem~\ref{theorem:general:minimax} (and Corollary \ref{corollary:sliced:wasserstein:minimax}) with $b$ arbitrarily close to $1$ can be compared with this discussion in \cite{meunier2022distribution}. In this theorem, the case where $c$ is also arbitrarily close to $1$, which corresponds to the mildest assumption, provides the same convergence rate $n^{-1/4}$. This rate is reached with $N$ of order $n^{3/4}$. Hence,  Theorem \ref{theorem:general:minimax} drastically reduces the number of samples necessary to reach the minimax convergence rate, going from the order $n^2$ in \cite{meunier2022distribution} to the order $n^{3/4}$. In this context, we nevertheless acknowledge out that we assume stronger regularity conditions on the covariate measures $\mu_1 , \ldots , \mu_n$ in Condition \ref{cond:sliced:wasserstein}, which  are not necessary in~\cite{meunier2022distribution}.

\begin{remark} \label{rem:curse:dimension:two}
\textcolor{black}{
While Remark~\ref{rem:curse:dimension}
discusses a curse of dimensionality on the convergence rate $n^{-\frac{bc}{2(bc+1)}}$ in Theorem~\ref{theorem:general:minimax}, we recall here that the rate $1/\lambda n^{1/2} N^{1/2}$ in Corollary \ref{cor:error:bound} is dimension-free, it does not depend on the dimension $d$ of the support $\Omega$. This is because the rates in Condition~\ref{condition:near:unbias}, the near-unbiased condition, do not depend on $d$.
Since the Sinkhorn, mean and sliced-Wasserstein-based embeddings satisfy this condition, one can say that they are not impacted by a curse of dimensionality.  
} 

\textcolor{black}{
It is nevertheless interesting to point out that the constant $c_s$ in Condition~\ref{condition:near:unbias} can increase with $d$, which means that $d$ can still have a negative impact on the Hilbertian embeddings.
With our proof techniques, the strongest dependence on $d$ for $c_s$ occurs for the Sinkhorn Hilbertian embedding, where in particular a constant from \citet[Eq. 4.13]{Barrio2022AnIC} is used (see Section~\ref{supplement:subsection:proof:sinkhorn} in the Appendix) that increases exponentially with $d$ (although it is possible that a refinement of the proofs in \cite{Barrio2022AnIC} could yield a milder dependence on $d$, in line with \cite{rigollet2024sample}). 
For the sliced Wasserstein embedding, the constant $c^{(2)}$ in Condition~\ref{cond:sliced:wasserstein} can be seen to involve suprema over $\cS^{d-1}$ and thus it can be negatively impacted by $d$, and so does $c_s$ consequently. Finally, for the mean embedding, we note that the impact of $d$ is typically moderate. Inspection of the proof of Lemma~\ref{lemma:near:unbiased:mean:embedding} (Section~\ref{supplement:subsection:proof:unbiased:mean:embedding} in the Appendix) reveals that $c_s$ depends on $\sup_{u \in \Omega} \sqrt{k(u,u)}$ which in particular is equal to $1$ for $k(u,v) = e^{-\| u-v\|^2}$ or is equal to $\sqrt{d}$ for $k(u,v) = u^\top v$ and $\Omega = [0,1]^d$. 
}    
\end{remark}

\section{Numerical experiments} \label{s:expe}

In numerical experiments, with simulations and a data example, we study kernel ridge regression based on the three Hilbertian embeddings considered in Section \ref{s:applitheorem}, in conjunction with the squared exponential kernel studied in this paper\footnote{\textcolor{black}{The 
\texttt{Python} code to reproduce the experiments of Sections \ref{sec:convergencespeedxp} and \ref{subsection:ecological:inference} is publicly available at \url{https://github.com/Algue-Rythme/DistributionRegressionUS2016}.
The \texttt{R} code to reproduce the experiments of Section \ref{subsection:ecological:simulated}  is publicly available at \url{https://github.com/francoisbachoc/kernel_distribution_regression}.}}.

\subsection{Settings for the Hilbertian embeddings \textcolor{black}{in Sections \ref{sec:convergencespeedxp} and \ref{subsection:ecological:inference}}} \label{subsection:expe:the:methods}

Typically, the Hilbertian embeddings $\mu \mapsto x_{\mu}$ considered theoretically in this paper are valued in infinite-dimensional Hilbert spaces. On the other hand, the numerical implementations of these embeddings map distributions to vectors. We shall refer to the dimensions of these vectors as the embeddings' dimension. 

For the embedding based on the Sinkhorn distance (Section \ref{subsection:application:sinkhorn}), 
we rely on the original implementation of~\cite{bachoc2023gaussian}, written with the \texttt{ott-jax} toolbox~\citep{cuturi2022optimal}. We parameterize the reference distribution $\cU$ as a discrete point cloud with equal probabilities along the points.
The embedding dimension is thus simply the number of points.
These points are randomly sampled.
In \eqref{dual_entrop}, we set $\epsilon$ to $10^{-1}$ in Section \ref{sec:convergencespeedxp} and to $10^{-2}$ in Section \ref{subsection:ecological:inference}.
Then,
computing the embeddings $x_{\mu} = g^{\mu}$ for $\mu \in \cPO$ can be done efficiently in parallel on GPUs. The algorithm is illustrated in Figure~\ref{fig:sinkhornmu}.   

For the mean embeddings (Section \ref{subsection:application:mean:embedding}),
we first consider the simple linear embedding $x_{\mu}(t) = \int_{\Omega} u^\top t \dd \mu(u) $ based on $k$ being the linear kernel. The embedding dimension is $d$ in this case.
Then, we also consider the embedding based on random Fourier features \citep{rahimi2007random}, where the embedding dimension is the number of Fourier features.
In both cases, the implementation is straightforward with \texttt{Numpy}.

Consider finally the embedding based on the sliced Wasserstein distance (Section \ref{subsection:application:sliced:wasserstein}).
For a dataset $(X_{i,j} , Y_i)_{i=1,\ldots,n,j=1,\ldots,N}$,
standard implementations of kernel methods for this embedding 
involve pairwise computations of one-dimensional optimal transport problems, with random directions. For instance, this is the case for the \texttt{Python} Optimal Transport (\texttt{POT}) toolbox~\citep{flamary2017pot}.

Instead, we provide a \texttt{Numpy} implementation where
we compute separately the embeddings $x_{\mu_i^N}$, with the definition $x_{\mu} (\theta,t) =  F^{-1}_{\mu_{\theta}}(t)$ from Section \ref{subsection:application:sliced:wasserstein},
with $(\theta,t) \in \mathcal{S}^{d-1}\times [0,1]$,
see  also
\citet[Prop. 5]{meunier2022distribution}. 
The numerically implemented embeddings are the values of $F^{-1}_{\mu_{\theta}}(t)$ on a discretization of $\mathcal{S}^{d-1}\times [0,1]$. The embedding dimension is thus the size of the discretization.
%
Once the embeddings are computed (with a cost linear in $n$), we compute the $n \times n$ covariance matrix of the kernel values at $(\mu_i^N)_{i=1}^n$. 
In Figure \ref{fig:slicedsanitycheck}, we check numerically the validity of our implementation, by comparing it with the numerical results from \texttt{POT}, for a toy example in dimension $d=2$. 

\begin{figure}
\centering
\includegraphics[width=\linewidth]{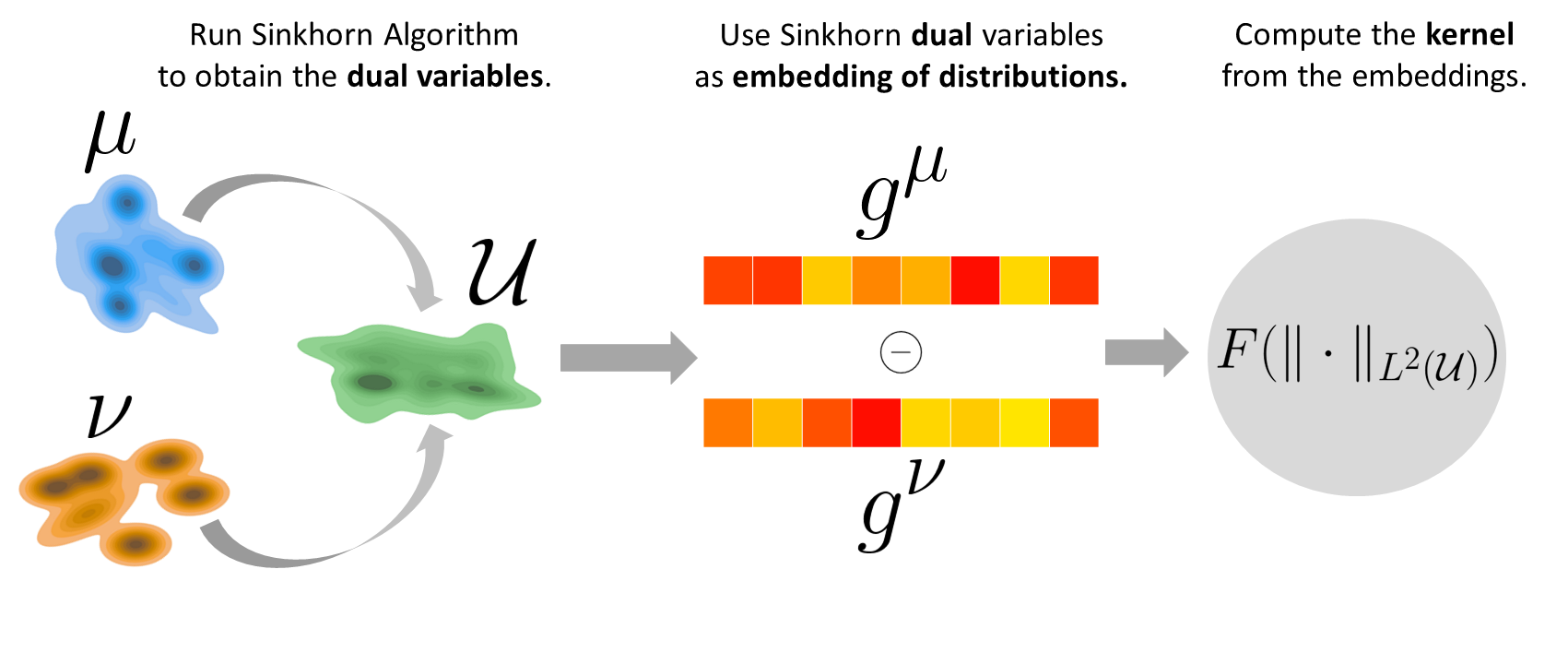}
\caption{Illustration of the embedding based on the Sinkhorn distance (Section \ref{subsection:application:sinkhorn}). Two distributions $\mu$ and $\nu$ are embedded as $g^\mu$ and $g^\nu$, on which the kernel $F( \| g^\mu - g^\nu \|_{\cL^2(\cU)})$ is computed.}
\label{fig:sinkhornmu}
\end{figure}

\begin{figure}
    \centering
    \includegraphics[width=0.5\linewidth]{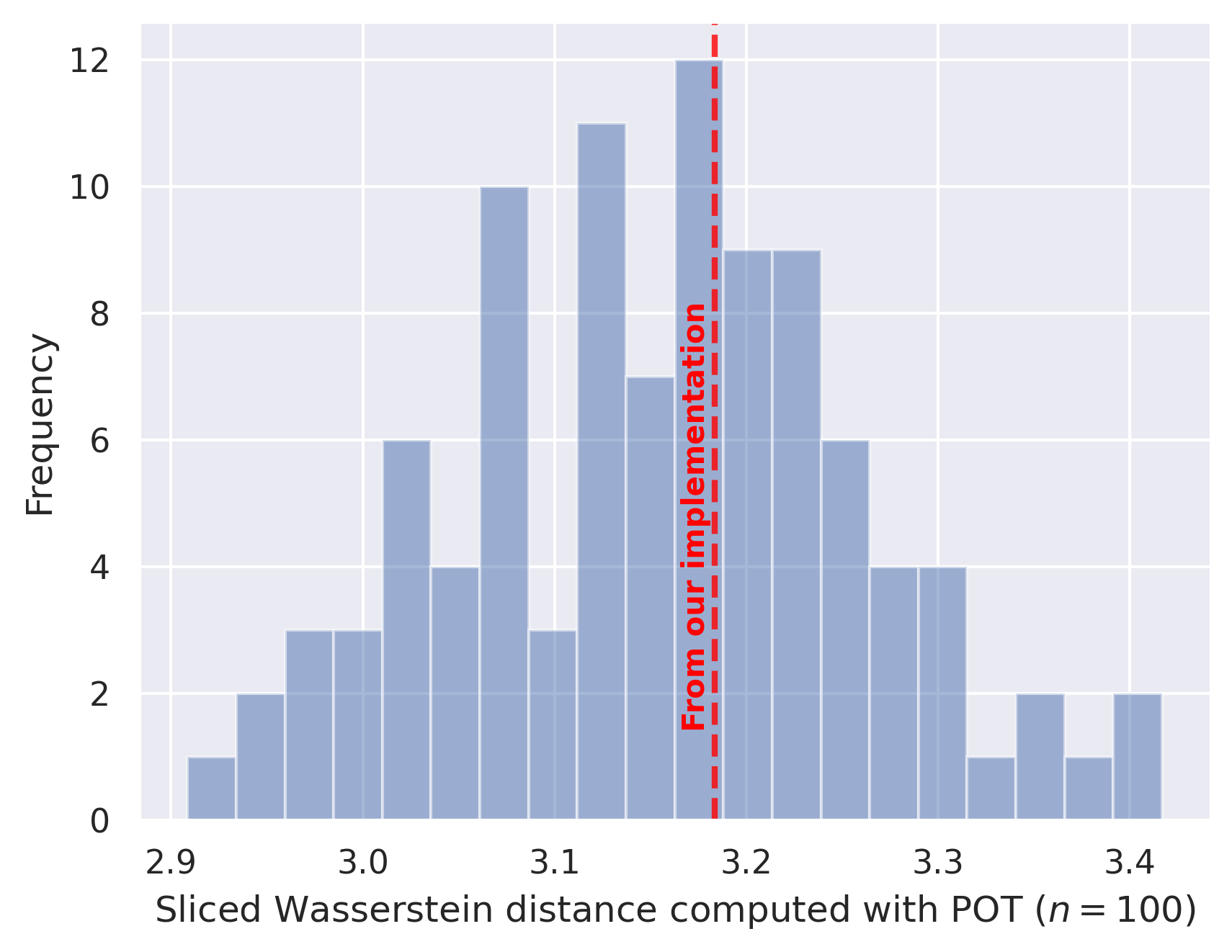}
    \caption{\textbf{Comparison of implementations for the sliced Wasserstein distance.} Stochastic results from the \texttt{POT} toolbox (in blue), compared to the deterministic result from the Hilbertian mapping of our implementation (in red), for two samples of size $500$ from Gaussian distributions ${\scriptsize \mu=\mathcal{N}\left([0, 0], \left[\begin{array}{cc}
                        1 & 0 \\
                        0 & 1 \\
                        \end{array}
                        \right]\right)}$ and ${\scriptsize \nu = \mathcal{N}\left([4, 2], \left[\begin{array}{cc}
                        2 & -0.8 \\
                        -0.8 & 1 \\
                        \end{array}
                        \right]\right)}$. The results from \texttt{POT} are stochastic because of the random directions. For our deterministic result, we use a discretization of the half-circle with $25$ directions of the form $(\frac{k}{25}-\frac{1}{2})\pi$ with $k=1,\ldots,25$, and a discretization of $[0,1]$ with $100$ equidistant points. In both cases, we compute a finite-dimensional version of $\SW(\mu,\nu)$ in \eqref{eq:sliced:wasserstein}.}
    \label{fig:slicedsanitycheck}
\end{figure}

 \subsection{Estimating the number of modes of Gaussian mixtures} \label{sec:convergencespeedxp}

We illustrate the impact of $n$ and $N$ numerically, on the problem of regressing the number of modes of Gaussian mixtures. This use case was introduced by~\cite{oliva2014fast}, and we consider the settings of \cite{meunier2022distribution}. The random $(\mu_i)_{i=1}^n$ are generated as follows. 
In dimension $d$, the number of modes $p$ is uniformly sampled in $\{1,\ldots, C\}$, where $C \in \mathbb{N}$ is a setting parameter.
Then for each component $b \in \{1 , \ldots,p\}$ of the mixture, the mean vector is sampled as $m_b~\sim~\mathcal{U}([-5,5]^d)$, and its associated covariance matrix is sampled as $\Sigma_b=a_{b} A_{b} A_{b}^\top +B_{b}$, where $a_{b} \sim \mathcal{U}([1, 4])$, $A_{b}$ is a $d \times d$ matrix with entries sampled independently from $\mathcal{U}([-1, 1])$ and $B_{b}$ is a diagonal matrix with entries sampled independently from $\mathcal{U}([0, 1])$. 
Therefore we set $\mu_i = \frac{1}{p}\sum_{b=1}^p\mathcal{N}(m_b,\Sigma_b)$ and $Y_i =  p$ to define the $i$-th element of the dataset.  
We sample $N$ points from each mixture $\mu_i$.
We illustrate the resulting dataset in Figure~\ref{fig:gmmexamples}. 

We test the three methods of Section \ref{subsection:expe:the:methods} on each different combination of values for $C~\in~\{2,10\}$, $d\in\{2,10\}$ and $\{ n, N \} \subset \{16,32,\ldots, 1024, 2048\}$.
For the mean embedding, we only consider the linear kernel (not the one based on random Fourier features).
For the sliced Wasserstein embedding, we use a discretization of $\mathcal{S}^{d-1}$ with 10 random directions, and a discretization of $[0,1]$ with 10 equispaced points.
For the embedding based on the Sinkhorn distance, we define the reference distribution $\cU$ by sampling 100 points uniformly in the unit ball.
In the two latter cases, the embedding dimension is 100.

We split each dataset into a train set containing 50\% of the mixtures, and we evaluate the explained variance score on the test set composed of the remaining 50\% mixtures.
Recall that the explained variance is one minus the ratio of the empirical variance of the errors $\hat{Y}_i - Y_i$ on the test set, divided by the empirical variance of the data $Y_i$ on the same test set.
\textcolor{black}{The regularization parameter $\lambda$ (see \eqref{eq:Rn:empirical}) is selected in $\{10^{-2},10^{-1},1,10,10^2\}$ with cross validation on the train set. Furthermore, each ``experiment'' (that is each quadruplet $(C,d,n,N)$, and there are 256 of them) is repeated 20 times (1 time to select the best $\lambda$ and then 19 times with the selected $\lambda$), and the results are averaged, which adds up to  18~432 kernel ridge regressions in total, and 64GB of raw data}. The averaged explained variance score as function of $(n,N)$ is plotted in Figure~\ref{fig:gmmresults}. 

\begin{figure}
    \centering
    \includegraphics[width=1\linewidth]{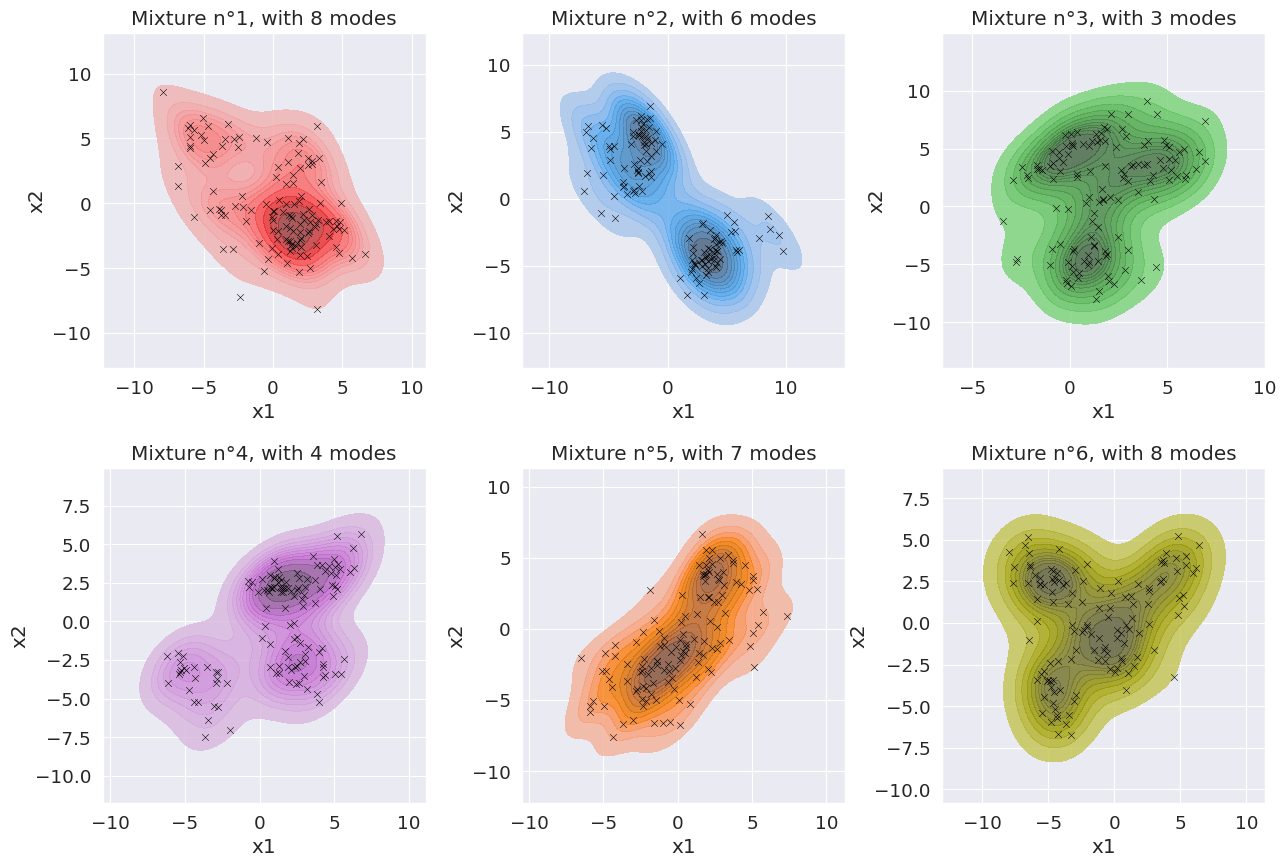}
    \caption{
    Examples of Gaussian mixture models used in the experiment of Section~\ref{sec:convergencespeedxp}, in dimension $d=2$ with at most $C=10$ components per mixture.}
    \label{fig:gmmexamples}
\end{figure}

 \begin{figure}
    \centering
    \begin{minipage}[]{1\textwidth}
        \includegraphics[width=\textwidth]{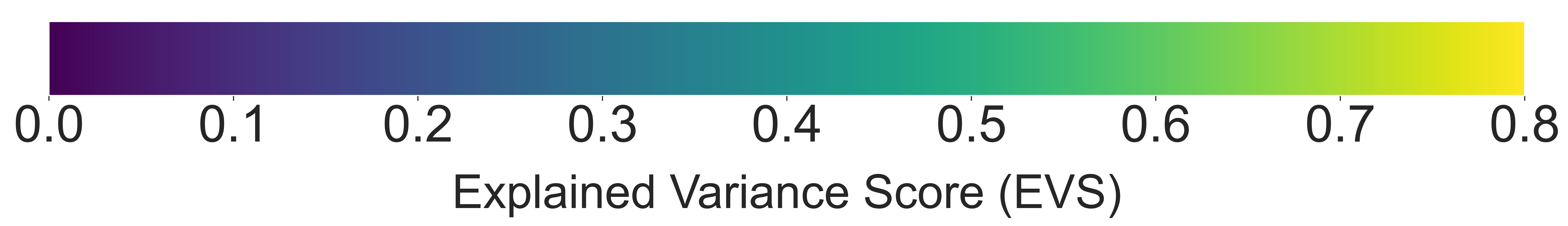}
    \end{minipage}

    \begin{minipage}[]{0.03\textwidth} 
        \resizebox{1\linewidth}{!}{\rotatebox{90}{\textbf{Number of Mixtures (n)}}}
    \end{minipage}
    \hfil
    \begin{minipage}[]{0.30\textwidth} 
        \includegraphics[width=\linewidth]{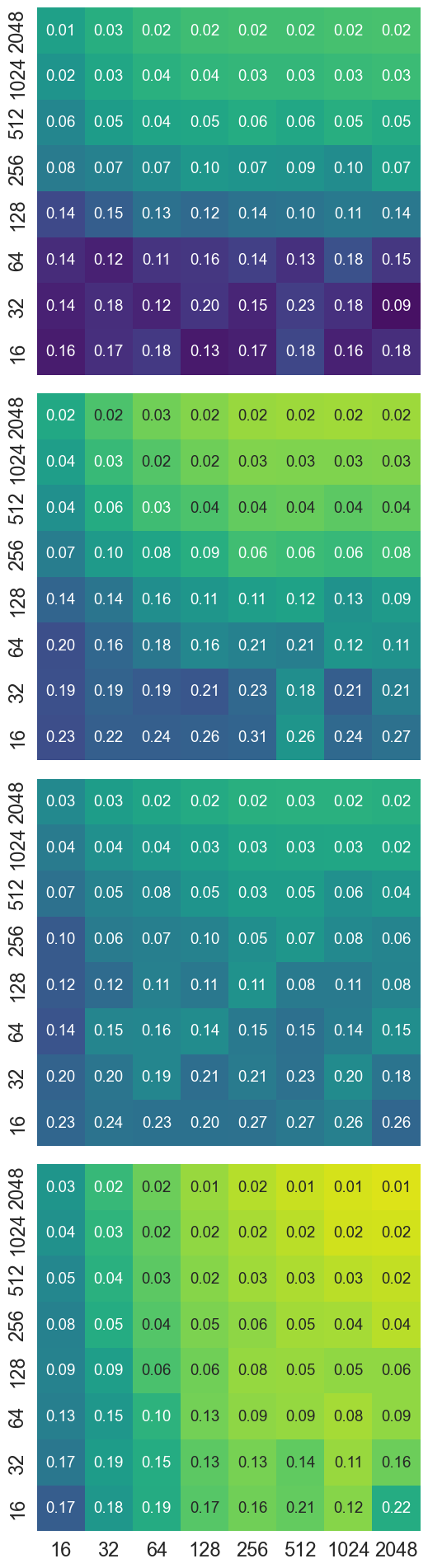}
        \subcaption{\textbf{Sinkhorn-$\cU$}.}
    \end{minipage}
    \hfil
    \begin{minipage}[]{0.28\textwidth}
        \includegraphics[width=\linewidth]{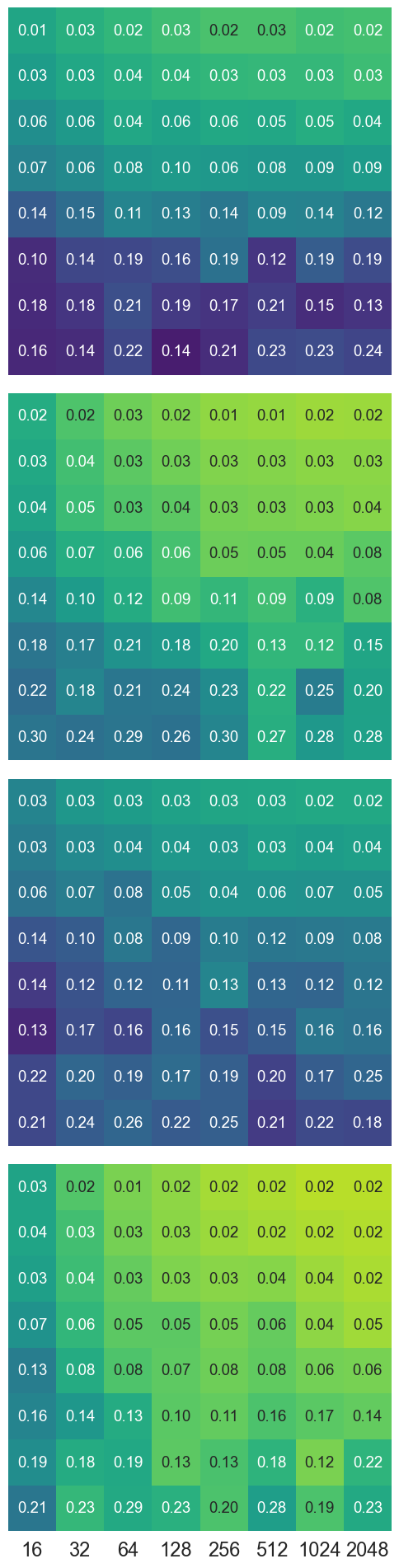}
        \subcaption{\textbf{Sliced Wasserstein}.}
    \end{minipage}
    \hfil
    \begin{minipage}[]{0.28\textwidth}
        \includegraphics[width=\linewidth]{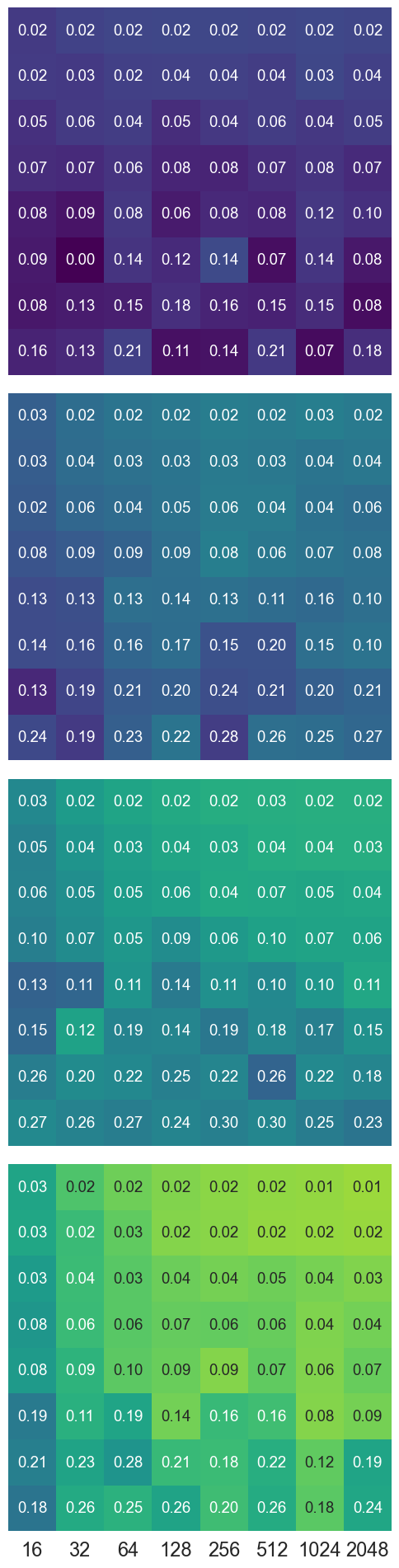}
        \subcaption{\textbf{Mean embedding}.}
    \end{minipage}
    \begin{minipage}[]{0.03\textwidth} 
        \vspace{-0.63cm} 
        \resizebox{1.15\linewidth}{!}{\rotatebox{90}{$d=10, C=10$ \quad $d=10, C=2$ \quad $d=2, C=10$ \quad $d=2, C=2$}}
    \end{minipage}

    \begin{minipage}[]{1\textwidth}
        \centering
        \resizebox{0.3\linewidth}{!}{\textbf{Points per Mixture (N)}}
    \end{minipage}

    \caption{Explained variance score for different embeddings of distributions, from the synthetic mode experiment described in section~\ref{sec:convergencespeedxp}, as a function of the total number of distributions $n$, and the number of samples $N$ per mixture. We plot the mean value of the explained variance score using the color, and the standard deviation inside the cell, computed over 20 independent runs. The dimension of the ambient space is denoted by $d$, and the maximum number of modes in the task is denoted by $C$. \textcolor{black}{For each set of parameters, all methods are benchmarked over the same datasets and splits.}}
    \label{fig:gmmresults}
\end{figure}

 The explained variance score increases with $n,N$, which illustrates Theorem \ref{theorem:general:minimax}, and its three applications, Corollaries \ref{corollary:sinkhorn:minimax},
\ref{corollary:mean:embedding:minimax}
and
\ref{corollary:sliced:wasserstein:minimax}. Furthermore, we see that, overall, increasing $n$ has a higher importance than increasing $N$ for improving the explained variance score. 
Also, there is a visual elbow effect, where, when $N$ is small, increasing it yields a strong improvement of the explained variance score. In contrasts, when $N$ is larger, increasing it further has a more limited impact, which is for instance particularly clear on the bottom-left panel of Figure \ref{fig:gmmresults}. This is in agreement with Theorem \ref{theorem:general:minimax}, where there is the threshold order $n^a$ for $N$ and increasing the order of magnitude of $N$ above this threshold does not improve the estimation error $f^{\star} - \hat{f}_{n,N}$. 

Overall, the three kernel regression methods have similar performances, with the exception of the mean embedding one (based on the linear kernel $k$) in the case $d=2$, that is significantly less accurate. 
Our interpretation is that in ambient dimension $d=2$, representing Gaussian mixtures by their two-dimensional mean vectors is too restrictive, and much more so than in ambient dimension $d=10$.

\subsection{A data example on ecological inference}
\label{subsection:ecological:inference}

We showcase an application of distribution regression to ecological inference, inspired by the seminal work of~\cite{flaxman2015supported}. 
We use 2015 US census data, covering 2 490 616 individuals $X_{i,j}$ ($0.75\%$ of the 2015 US population), and totaling $d = 3 899$ features each (with one-hot encoding of categorical ones), covering characteristics like gender, age, race, occupation, schooling degree or personal income. This yields a fine-grained dataset of US demographics over $n=979$ regions $\mu_i$, spanning the 50 American states (20 regions per state on average, and $N=2 500$ individuals ${X_{i,j} \sim \mu_i}$ per region on average).

We consider three targets $Y_i\in [0, 1]$ from the results of the 2016 presidential election: percentages of Republican vote, Democrat vote, and other vote. 
We perform distribution regression by adapting the \texttt{pummeler} package of~\cite{flaxman2015supported,flaxman2016understanding} to compute the Hilbertian embeddings described in Section \ref{subsection:expe:the:methods}.  
For the Sinkhorn distance, we consider the support sizes 16, 32 and 64 for the reference distribution $\cU$. 
For the generation of the points of $\cU$, the numerical variables are sampled from the standard normal distribution, while the categorical variables are sampled from a discrete distribution.
\textcolor{black}{The regularization parameter $\epsilon$ was selected by sweeping over negative powers of ten. For $\epsilon=10^{-3}$, the solver failed to converge in \texttt{float32}-arithmetic within 2 000 iterations. For $\epsilon=10^{-1}$, the excessive regularization caused features to be too similar, which degraded the performance. The value $\epsilon=10^{-2}$ was selected as the best tradeoff.}

For the mean embedding, we consider the linear kernel $k$, for an embedding in dimension 3 899, and the embedding based on random Fourier features in dimension 4 096. 
For the sliced Wasserstein distance, we study the values 1 024 and 4 096 for the embedding dimensions (the number of discretization points in Section \ref{subsection:expe:the:methods}).
We find that directly regressing the probabilities $Y_i\in [0, 1]$ yields consistently better results than 
regressing their logarithms. Therefore we only report results involving the direct regression of these probabilities.  We also
standardize the features to improve the numerical stability of the computations. Finally, we enforce a default regularization parameter $\lambda=10^{-3}$. 
Indeed, in preliminary (unreported) experiments with the alternative values $10^{-4},10^{-2},10^{-1}$, it appeared that this value was the most suitable. Note that, in general with kernel ridge regression, an overly small $\lambda$ would cause overfitting, while an overly large one would cause the prediction function to be too regularized, deteriorating its accuracy.

In Table~\ref{tab:ecologicalregression},
we report the mean accuracies of the methods, averaged over 5 random train/set splits of sizes 80\% (783 regions) / 20\% (196 regions) respectively, together with the empirical variance with respect to the random seed. 
For interpretation purposes, we also report the results achieved by the constant baseline prediction given by the empirical mean.
We also report the runtime required to compute the embeddings from the raw US census data, and the runtime required to perform kernel ridge regression, given the embeddings.

Table~\ref{tab:ecologicalregression} highlights global properties, and also specific benefits and drawbacks of each method.
Overall, the accuracy and computation time increases with the embedding dimension (except for instance for the accuracy of Sinkhorn from dimension 32 to 64).
The mean embedding with the linear kernel yields the fastest embedding computation but also the lowest prediction accuracy. Hence, despite the high ambient dimension (3 899), a linear embedding is too restrictive. In contrast, the (non-linear) mean embedding with the random Fourier features yields the highest accuracy.
The sliced Wasserstein embedding in dimension 1 024 is the fastest to compute (setting aside the linear mean embedding) and provides accuracies relatively close to the optimal one (with random Fourier features), for a significantly smaller embedding dimension (1 024 against 4 096). 
This is beneficial for dataset compression purposes. 
Hence, overall, the sliced Wasserstein embeddings provide an interesting tradeoff between runtime and final performance.

Finally, the Sinkhorn embeddings provide accuracies that are below those from the sliced Wasserstein ones and the mean embedding ones with Fourier features. On the other hand, the benefit of the Sinkhorn embeddings is that the embedding dimension is much smaller (a maximum of 64, against 1 024 to 4 096 for the other ones). Again, this is beneficial for dataset compression purposes, and opens a non-linear dimension reduction prospect.
On the Sinkhorn embeddings, we notice that the support points of the reference measure are randomly generated, and the weight probabilities are uniform. 
In \cite{bachoc2023gaussian}, these points and weights are optimized by Gaussian maximum likelihood instead. 
Hence, a numerical perspective to this work would be to also optimize these points and weights in the frame of Table \ref{tab:ecologicalregression}, based on cross validation error criteria for instance.
\textcolor{black}{This could result in an accuracy improvement, while still keeping a (very) small embedding dimension. However,
this would entail an additional computational cost, in particular for computing gradients with respect to the points and weights, by backpropagation.
It is thus a very challenging perspective from the numerical viewpoint, given the particularly high dimension and large sample size here.}

\textcolor{black}{Overall, setting mean embedding (linear) aside in Table \ref{tab:ecologicalregression},  it appears that the accuracy ranking of the embeddings is explained by their degrees of complexity. The most accurate one is mean embedding (Fourier) which most benefits from simplicity. Note that mean embedding is actually already performing well in \cite{flaxman2015supported,flaxman2016understanding}. Sliced Wasserstein, the next most accurate, also benefits from simplicity, although it is dependent on the number of random projections and discretized probabilities. Sinkhorn embedding, the least accurate here, suffers from the difficulty of tuning its parameters, particularly $\cU$ and $\epsilon$, as discussed above. Recall that the accuracy comparison is based on 5 repeated train-test decompositions of the entire dataset.}

In Figure~\ref{fig:ecologicalregression}, we provide a graphical example of successful distribution regression, for predicting the Democrat votes. 
We use the mean embedding with random Fourier features (having the best accuracy in Table \ref{tab:ecologicalregression}).
We split the dataset into 5 disjoint folds of sizes 195 or 196 each, we fit a kernel ridge regressor on four of the splits, and display its predictions on the fifth one, thus preventing overfitting. It appears that the ecological inference is successful, as the structure of the Democrat vote is preserved between reality and prediction.
In particular, the Democrat vote is well predicted in major cities of California and the Northeast. 
Among the rare exceptions to this accurate prediction, one can notice the extreme south of Florida, where the Democrat vote is strongly under-estimated. 

\begin{table} \setlength\tabcolsep{.5\tabcolsep}
\centering
\begin{tabular}{|cccc|cc|cc|} 
 \hline
 \multirow{4}{*}{\makecell{Hilbertian\\ embedding}} & \multirow{4}{*}{\makecell{Dim.}} & \multirow{4}{*}{\makecell{Hilbertian\\ embedding\\ runtime \\
 ($\downarrow$ is better)}} 
 & \multirow{4}{*}{\makecell{Ridge\\ regression\\ runtime 
 \\ ($\downarrow$ is better)}} & \multicolumn{2}{c|}{\makecell{Explained variance\\ score in \% \\ ($\uparrow$ is better)}} & \multicolumn{2}{c|}{\makecell{Mean absolute\\ error in \% \\ ($\downarrow$ is better)}} \\ 
 & & & & & & & \\
 & & & & Democrat & Republican & Democrat & Republican\\
 \hline
 \hline
 Constant baseline & 0 & 00m00s & $0.00$s & 0. & 0. & $12.4\pm 0.4$ & $12.7\pm 0.4$\\
 Mean embedding (linear) & 3899 & \textbf{02m30s} & $1.80$s & $27.4\pm 12$ & $03.7\pm 6.2$ & $10.0\pm 1.0$ & $13.1\pm 5.0$\\
Mean embedding (Fourier) & 4096 & 09m33s & $0.76$s & $\bm{82.1\pm 5.7}$ & $\bm{83.1\pm 2.3}$ & $\bm{4.4\pm 0.5}$ & $\bm{5.0\pm 0.3}$\\
 Sliced-Wasserstein & 1024 & 03m34s & 0.32s & $70.2\pm 5.6$ & $72.74\pm 4.1$ & $6.1\pm 0.5$ & $6.2\pm 0.4$\\
 Sliced-Wasserstein & 4096 & 03m44s & 0.68s & $75.9\pm 6.8$ & $75.1\pm 3.3$ & $5.3\pm 0.3$ & $6.2\pm 0.3$\\
 Sinkhorn & \textbf{16} & 26m49s & $\bm{0.16}$s & $50.6\pm 8.2$ & $48.8\pm 5.1$ & $7.9\pm 0.5$ & $8.3\pm 0.5$\\
 Sinkhorn & 32 & 28m27s & $\bm{0.16}$s & $67.1\pm 4.6$ & $66.0\pm 4.4$ & $6.6\pm 0.3$ & $6.9\pm 0.2$\\
 Sinkhorn & 64 & 30m42s & $0.23$s & $61.7\pm 3.0$ & $59.8\pm 4.0$ & $7.1\pm 0.3$ & $7.6\pm 0.3$\\
 \hline
\end{tabular}
\caption{We perform distribution regression to predict percentages of Democrat and Republican vote for the 2016 US presidential election, from socio-economics features extracted from 2015 US census data. We report the explained variance score and the mean absolute error over the test set, averaged over 5 random train/test splits of sizes 80\% / 20\% respectively. We also report the runtime required to compute the Hilbertian embeddings and  to perform ridge regression on the embeddings. Best scores per column are in bold font.}
\label{tab:ecologicalregression}
\end{table}

\begin{figure}
    \centering
    \begin{minipage}[]{0.42\textwidth}
    \includegraphics[width=5cm]{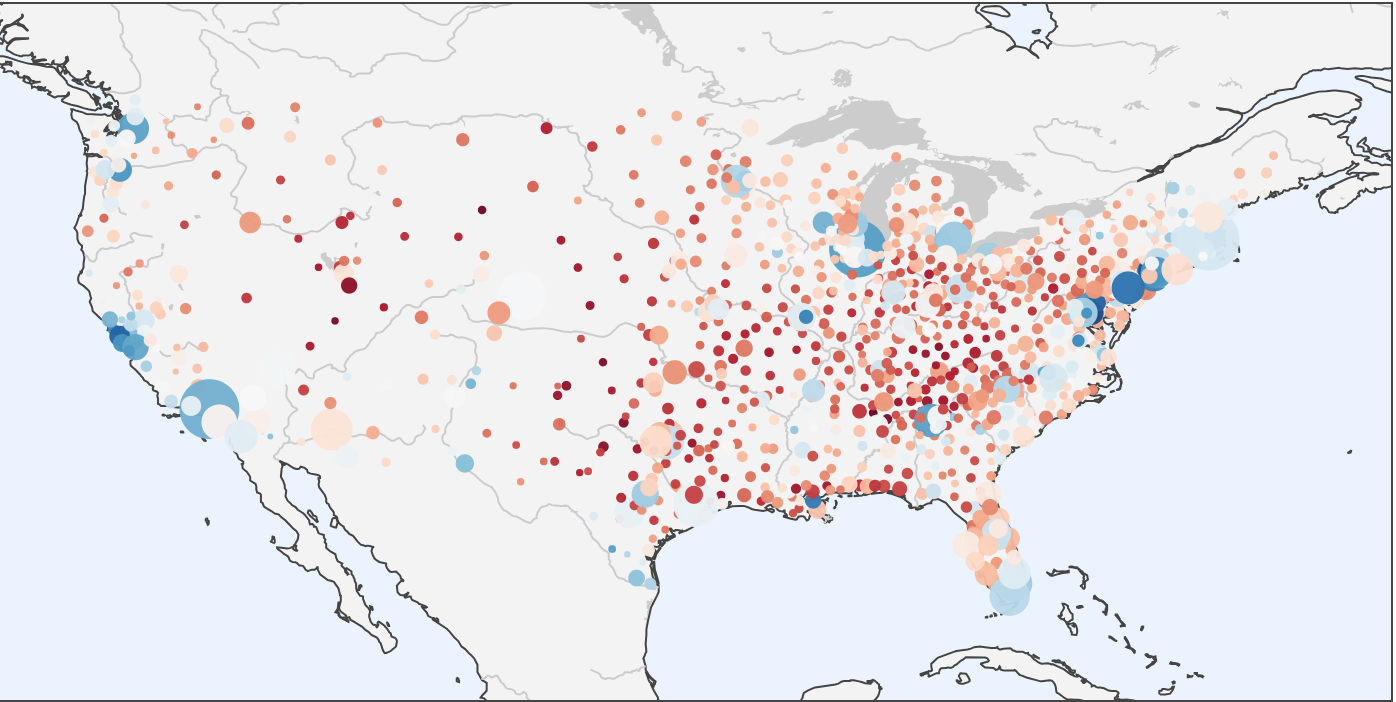}
    \subcaption{Democrat vote in the 2016 US presidential election.}
    \end{minipage}
    \hfil
    \begin{minipage}[]{0.42\textwidth}
  \includegraphics[width=5cm]{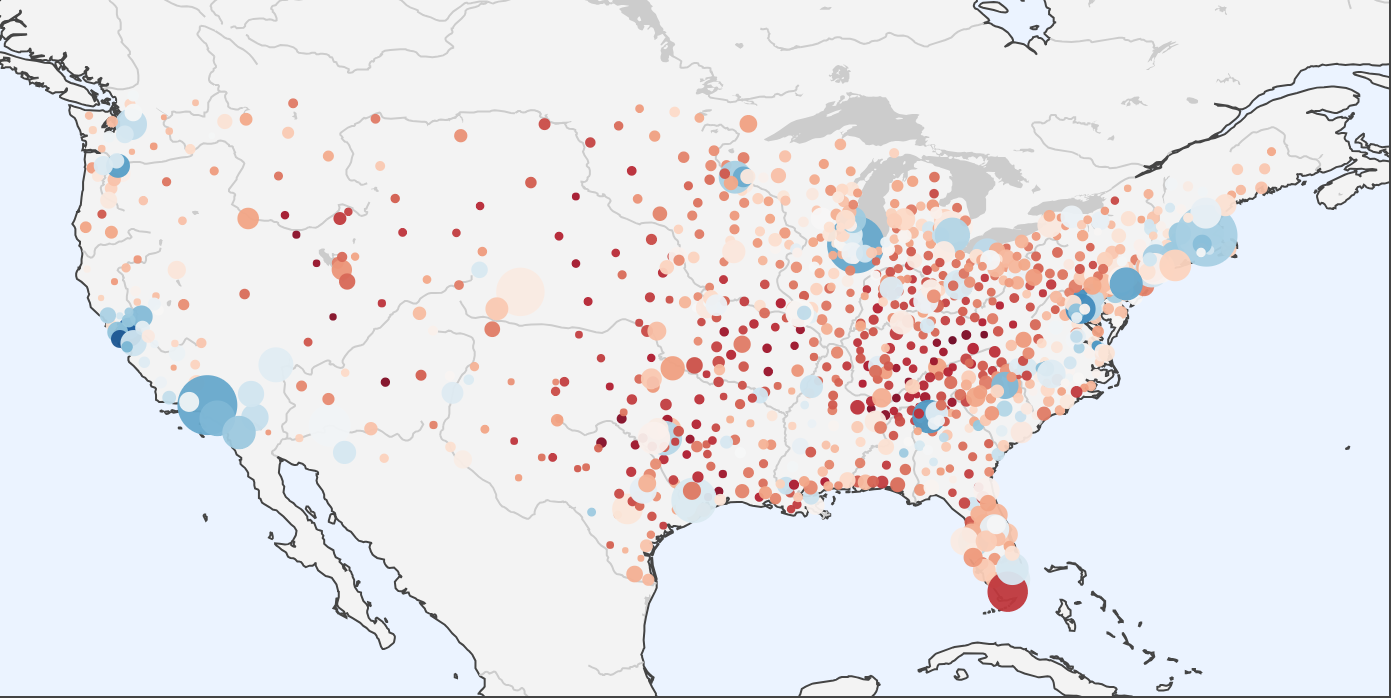}
    \subcaption{Distribution regression from {socio-economics} features: 4.4\% of mean error.}
    \end{minipage}
    \hfil
    \begin{minipage}[]{0.09\textwidth}
    \includegraphics[width=\linewidth]{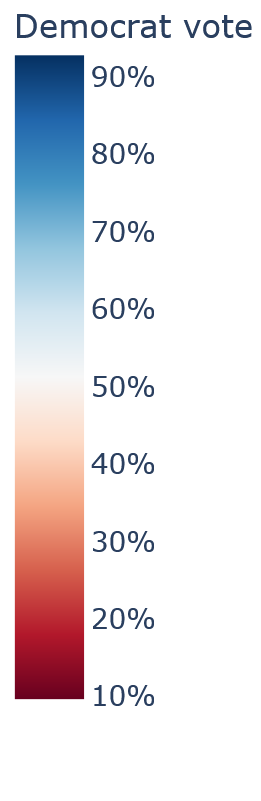}
    \end{minipage}
    \caption{Predicted and actual Democrat vote, in the 2016 US presidential election, in each of the 975 regions (Hawaii and Alaska excluded from the plot). The surface of the markers is proportional to the number of individuals in the 2015 US census data, totaling 2 490 616 individuals over the USA. The Democrat vote is successfully recovered from the socio-economics features.}
    \label{fig:ecologicalregression}
\end{figure}

\subsection{\textcolor{black}{Further insight on ecological inference with a simulation study}}
\label{subsection:ecological:simulated}

\textcolor{black}{In order to complement the previous data example on ecological inference, we now present a simpler simulation study mimicking it. We repeat $50$ Monte Carlo steps of data generation and kernel distribution regression computation. We consider $n=100$ independently and randomly generated distributions $\mu_i$ on $\mathbb{R}^d$ (representing the regions of Section~\ref{subsection:ecological:inference}), each associated to $N=200$ samples $(X_{i,j})_{j=1}^N$, independent given $\mu_i$ (representing the features of individuals/voters of Section~\ref{subsection:ecological:inference}). Given $\mu_i$, each $X_{i,j}$ is sampled as
\[
X_{i,j}
=
A_{i,j} {1}_d + B_{i,j},
\]
where $1_d$ is the vector of $\mathbb{R}^d$ composed of ones,
where $A_{i,j}$ is uniformly distributed on $[
- \alpha_i , \alpha_i] \subset \mathbb{R}$ and where $B_{i,j}$ is independent from $A_{i,j}$ and sampled as
\[
B_{i,j}
\sim 
\mathcal{N} 
\left( 
\begin{pmatrix}
    \beta_{i,1}
    \\ 
\beta_{i,2}
\\ 
0 \\
\vdots \\
0
\end{pmatrix}
,
\frac{1}{4} I_d
\right).
\]
Hence, each distribution $\mu_i$ is characterized by its parameters $\alpha_i,\beta_{i,1},\beta_{i,2}$ that are independently and uniformly distributed on $[0.05,0.1]$, $[-0.7,0.7]$ and $[-0.7,0.7]$. }

\textcolor{black}{To each individual $X_{i,j}$ there is an associated $Y_{i,j} \in \{0,1\}$ (representing the vote of the individual $X_{i,j}$) with
\[
\bP( Y_{i,j} = 1 | X_{i,j} )
=
\frac{e^{10(X_{i,j,1} - X_{i,j,2})}}{1+e^{10(X_{i,j,1} - X_{i,j,2})}},
\]
where $X_{i,j,1} $ and $X_{i,j,2} $ are the two first components of $X_{i,j}$. Finally, we let $Y_i = \frac{1}{N} \sum_{j=1}^N Y_{i,j}$, corresponding to the average vote of the region $\mu_i$.}

\textcolor{black}{
Hence, we model a situation where the variable $X_{i,j,1} $ is positively associated to the vote $Y_{i,j}=1$, the variable $X_{i,j,2} $ is negatively associated to the vote $Y_{i,j}=1$ and the other variables do not impact the vote. Also, the purpose of the variable $A_{i,j}$ above is to create a (moderate) dependence between the components of $X_{i,j}$.}

\textcolor{black}{
Regarding kernel distribution regression, we focus on the sliced Wasserstein embedding, for the sake of concision and since it provided a good tradeoff between accuracy and computation speed in Section~\ref{subsection:ecological:inference}. 
In a data-driven way,
we select  the values of the ridge parameter $\lambda$ in \eqref{eq:Rn:empirical} and  of a scale parameter $\ell$ such that the squared exponential kernel in \eqref{eq:squared:exponential} becomes $e^{- \| u - v \|_\HH^2 / \ell^2 }$. Here $\| u - v \|_\HH^2$ is numerically an average of squares over the random directions and the differences of ranked values (these ranked values corresponding to computing univariate Wasserstein distances), see \eqref{eq:sliced:wasserstein}. 
We fix the value $100$ for the number of random directions.
The selection of $\lambda$ and $\ell$ is done by minimizing the sum of squared errors over $10$ random splits of $(\mu_i,Y_i)_{i=1}^n$ between $80$ training pairs and $20$ test pairs. This minimization is over a squared regular grid of size $100$ in log scale for the values of $\lambda$ and $\ell$.  Since the simulation study here is of a smaller scale compared to Sections \ref{sec:convergencespeedxp} and \ref{subsection:ecological:inference}, the implementation consists in standalone \texttt{R} scripts.
}

\textcolor{black}{Figure \ref{figure:ecological:simulated} (top-left) provides the boxplots of the $50$ explained variance scores of the predictions by kernel distribution regression as presented above, over new independent test sets $(\mu_i,Y_i)_{i=n+1}^{n+n_{\text{test}}}$ with $n_{\text{test}} = 200$ and for $d \in \{5,10,15,20\}$. For $d=5$, most explained variance scores are above $0.98$, which is very high and confirms the efficiency of kernel distribution regression with the sliced Wasserstein embedding. The explained variance scores clearly decrease when $d$ increases, indicating a curse of dimensionality.}

\textcolor{black}{Next, for $d=5$, beyond prediction performances, we show how distribution regression can be applied to understand the impact of the $d$ features on the vote. For each $k \in \{1,\ldots,d\}$ and
for each set of samples $(X_{i,j})_{j=1}^N$,we split the set between the subset $(X_{i,j})_{j \in E_{+,k,i}}$ associated to the $N/2$ largest values of the $k$th feature $X_{i,j,k}$ and the subset $(X_{i,j})_{j \in E_{-,k,i}}$  associated to the $N/2$ smallest values. Then from the $n$ subsets $(X_{i,j})_{j \in E_{+,k,i}}$, $i=1,\ldots,n$, we use the trained kernel distribution regression predictors above to compute corresponding predictions  of percentages of votes $(\widehat{Y}_{+,k,i})_{i=1}^n$. Similarly we compute the predictions $(\widehat{Y}_{-,k,i})_{i=1}^n$. 
In each of the five panels, top-center, top-right and bottom, of Figure \ref{figure:ecological:simulated}, corresponding to the five first Monte Carlo steps of the simulation study, we show the five boxplots of $(\widehat{Y}_{+,k,i} - \widehat{Y}_{-,k,i})_{i=1}^n$ for $k \in \{1,2,3,4,5\}$. The boxplots for $k=1$ and $k=2$ clearly stand out visually, with the highest values for $k=1$ and the lowest values for $k=2$. Hence for $k=1$, the kernel distribution regression predictors successfully detect that larger values of $X_{i,j,1}$ are associated to a higher percentage of vote, and conversely for $k=2$. Furthermore, the predictors do not suggest similar effects for the other features $3,4,5$, which indeed have no effects on the vote in the true unknown data generating process.
We note that these conclusions are obtained by performing predictions on empirical distributions that do not necessarily ``look like'' the ones associated to the observed vote percentages $Y_i$, since we select only the lower or higher values of the feature $k$ to create $E_{-,k,i}$ and $E_{+,k,i}$. This can be interpreted as a robustness of kernel distribution regression, in this simulation study.}

\begin{figure}
\begin{tabular}{ccc}
\includegraphics[width=0.3\textwidth]{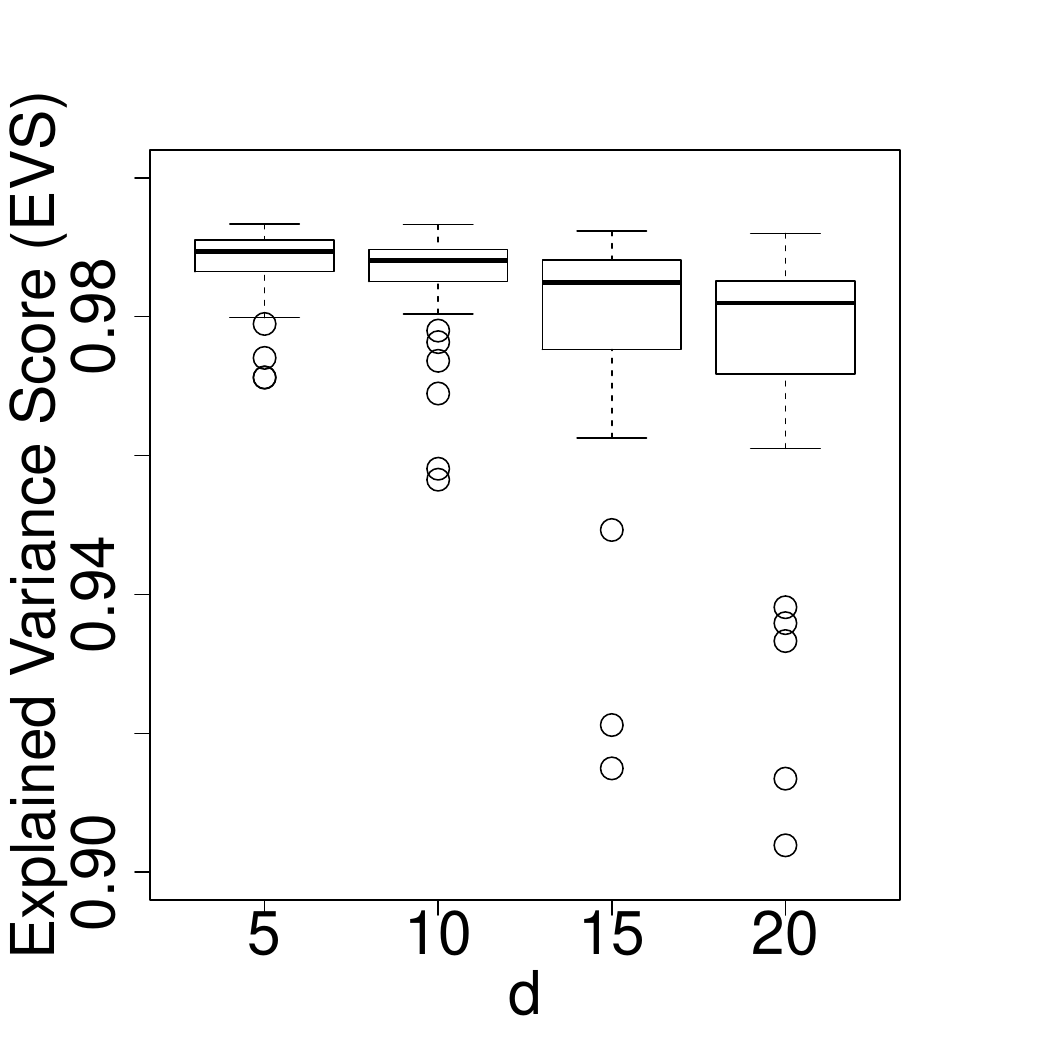} & \includegraphics[width=0.3\textwidth]{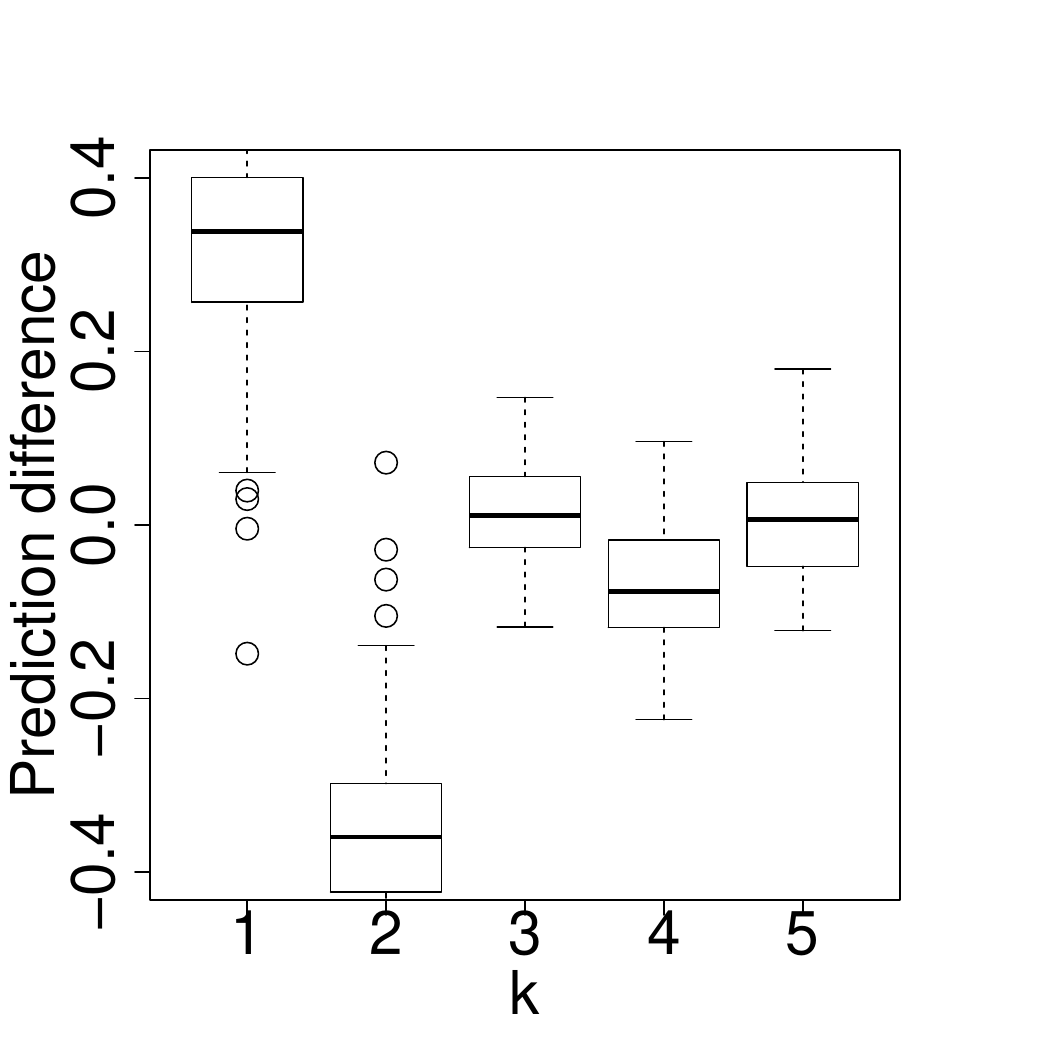}& \includegraphics[width=0.3\textwidth]{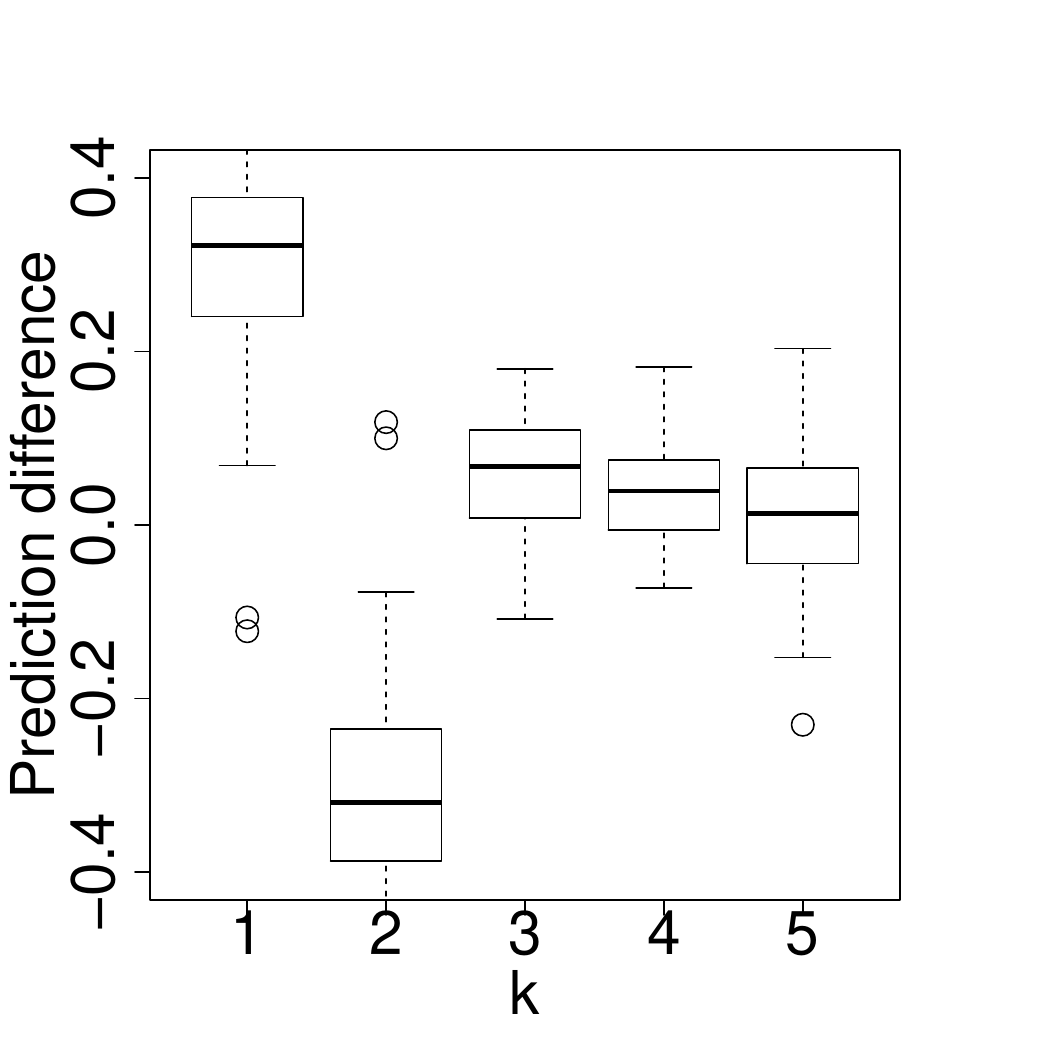} \\
\includegraphics[width=0.3\textwidth]{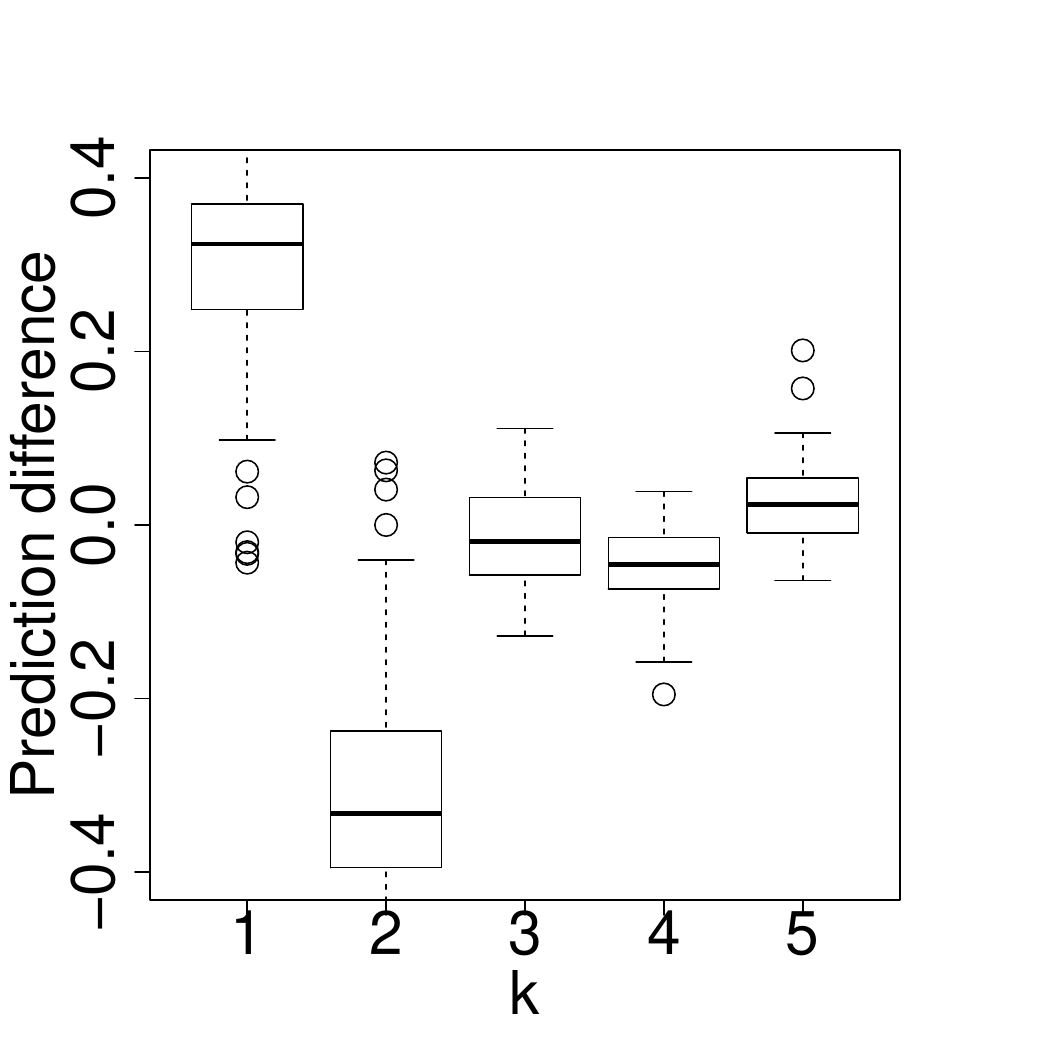} & \includegraphics[width=0.3\textwidth]{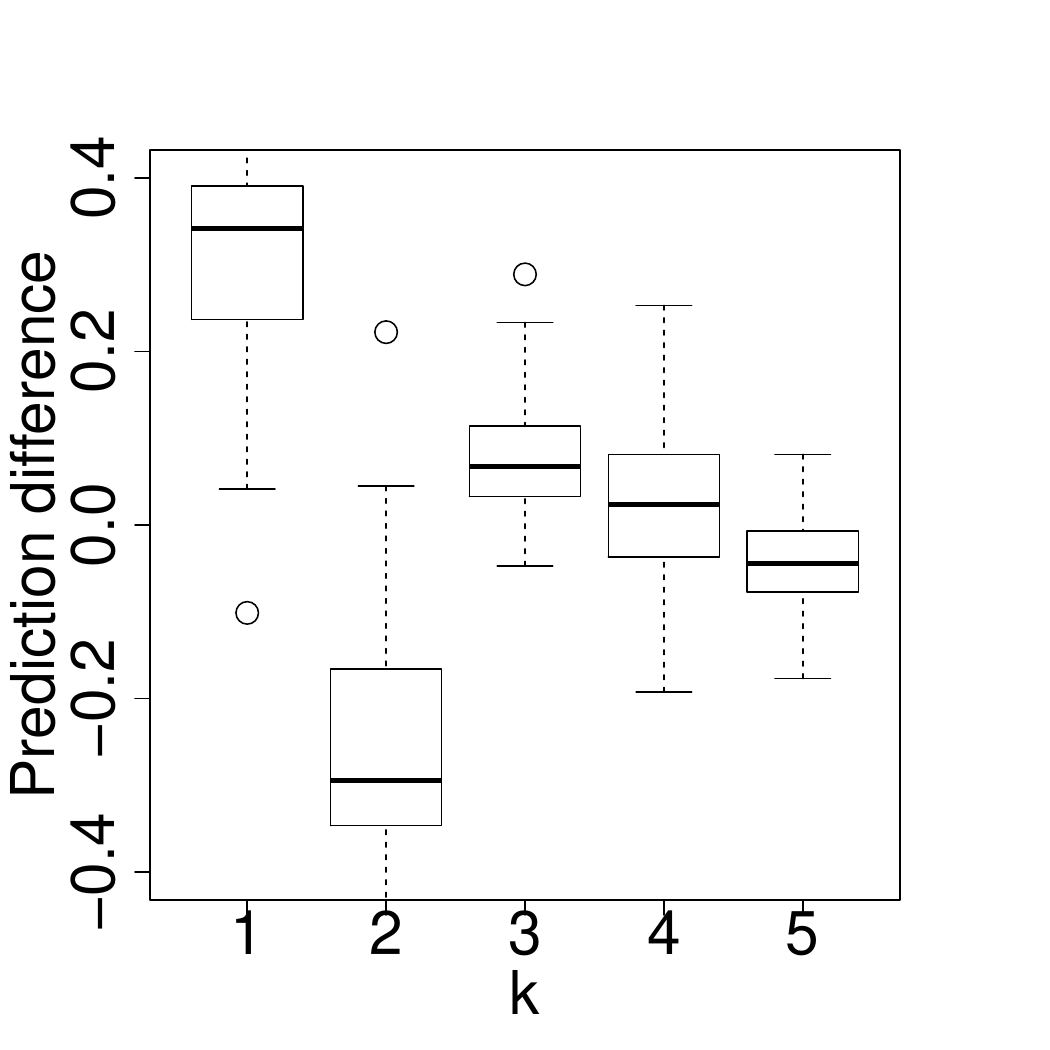} & \includegraphics[width=0.3\textwidth]{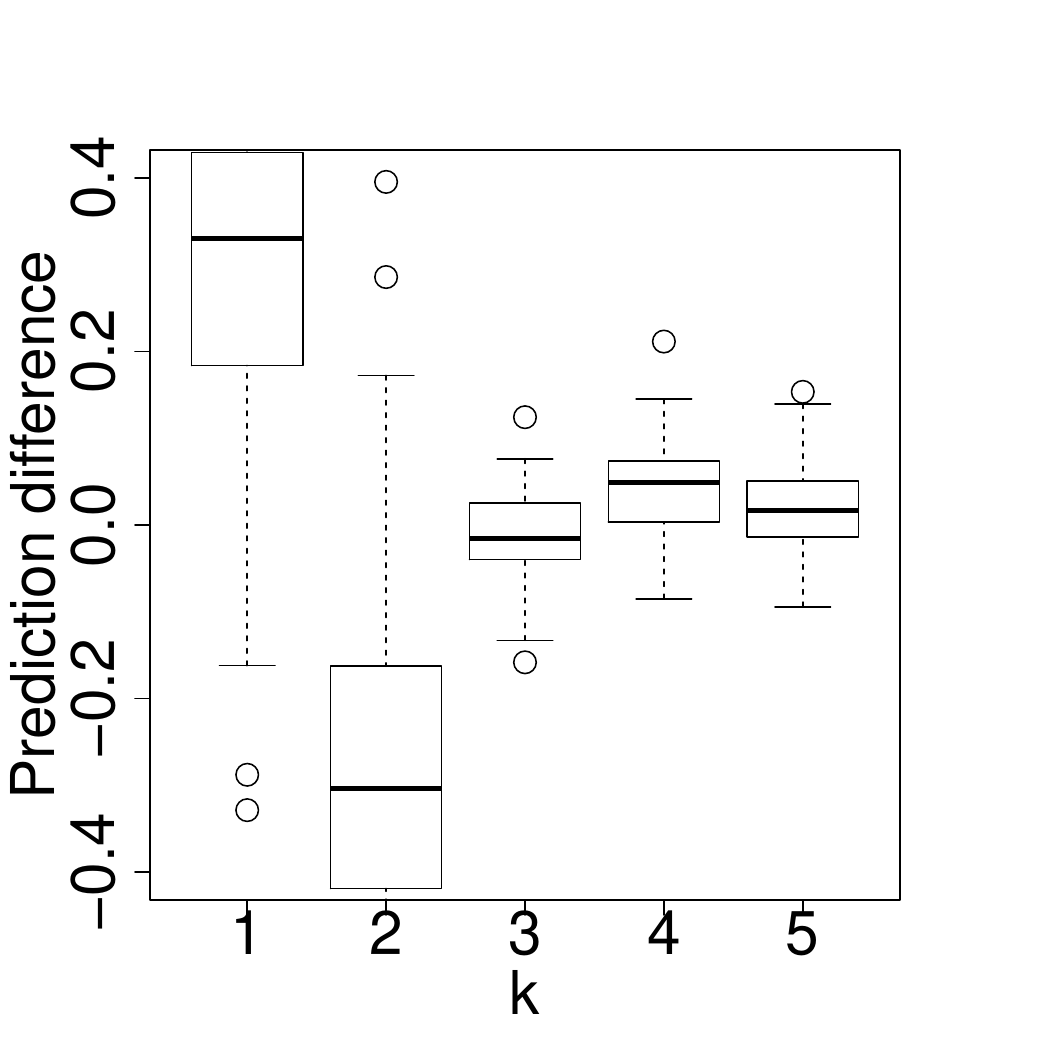}
\end{tabular}
\caption{\textcolor{black}{Simulation study of Section \ref{subsection:ecological:simulated} mimicking the ecological inference dataset of Section \ref{subsection:ecological:inference}. Top-left: boxplots of explained variance scores over test sets as function of ambient dimension $d$. Top-center, top-right and bottom: boxplots of the differences $(\widehat{Y}_{+,k,i} - \widehat{Y}_{-,k,i})_{i=1}^n$ as a function of $k$ for the five first Monte Carlo steps of the simulation study.
}}
\label{figure:ecological:simulated}
\end{figure}

\section{Conclusion} 
\label{section:conclusion}

This work contributes to an improved unified learning theory of distribution regression based on Hilbertian embeddings. We provide general error bounds, for the effect of two-stage sampling, based on innovative proof techniques (Section \ref{section:general:hilbert}). This enables us to improve the state of the art for three Hilbertian embedding methods (Section \ref{s:applitheorem}).
Applications to other potential embeddings would be possible as well.
Similarly, we focus on providing bounds in expectation, or convergence rates in probability, but our proof methods could naturally allow for concentration bounds as well. 

Open questions that go beyond the scope
of this article, but are currently under investigation, include the following. 
First, we study minimax rates for three Hilbertian embeddings in the well-specified case where their associated RKHSs contain the unknown regression function. An important question is to compare the flexibility of these well-specification assumptions, by comparing the RKHSs and their norms. 
This would provide additional theoretical insight, that could improve the understanding of numerical comparisons between these embeddings in distribution regression, like the comparison in Section \ref{s:expe}. 
Second, generalizing the analysis of kernel methods with distribution inputs, under a two-stage sampling, beyond regression would be valuable. 
In this view, other problems of interest include kernel-based classification, dimension reduction and testing.

\begin{acks}[Acknowledgments]
The authors are grateful to two anonymous referees, an Associate
Editor and the Editor, for their constructive comments that lead to an improvement of the paper. The authors benefited from useful feedback by Juan Cuesta-Albertos and Antonio Cuevas.
\end{acks}

\begin{funding}
Fran\c{c}ois Bachoc was supported by the Projects GAP (ANR-21-CE40-0007)
and
BOLD (ANR-19-CE23-0026)
of the French National Research Agency (ANR)
and by the Chair UQPhysAI of the Toulouse ANITI AI Cluster. The research of Alberto González-Sanz is partially supported by grant PID2021-128314NB-I00 funded by MCIN/AEI/10.13039/\linebreak[1]501100011033/FEDER,
UE.
\end{funding}

\appendix

\section{Proofs for Section \ref{section:general:Hilbert:error:bound}}
\label{appendix:proofs:error:bound:general:hilbert}

In Appendix \ref{appendix:proofs:error:bound:general:hilbert}, all the analysis is conducted conditionally to $(\mu_i,Y_i)_{i=1}^n$, which are thus treated as deterministic. In particular, the symbols $\bE$ and $\bP$ are implicitly conditional on $(\mu_i,Y_i)_{i=1}^n$. 
Note that, with $x_i = x_{\mu_i}$, then also $(x_i)_{i=1}^n$ is treated as deterministic.

\subsection{The existing bounds} \label{subsection:existing:bounds}

Consider the setting of Theorem \ref{theorem:error:bound}.
Here we review the bounds and proofs provided by recent existing references. These bounds are given for kernels on distributions using specific Hilbertian embeddings, but they can be straightforwardly stated for kernels on general Hilbert spaces, as we do below.
We will be especially close to \cite{meunier2022distribution} in terms of exposition, but similar ideas are also used for instance in  \cite{szabo2015two,szabo2016learning}. 
The purpose is twofold. First, this will allow to appreciate our improvement in Theorem \ref{theorem:error:bound}. Second, these existing bounds will help as intermediary results in our proofs. 

For $x \in \HH$, recall $K_{x} = K(x , \cdot) \in \HH_K$.
We write 
\begin{align*}
\Phi_n 
 : 
 ~ ~ ~
 \mathbb{R}^n 
&
\to 
     \HH_K 
     \\
     \begin{pmatrix}
         \alpha_1 \\
         \vdots \\
         \alpha_n
     \end{pmatrix}
&
     \mapsto 
     \sum_{i=1}^n \alpha_i K_{x_i},
\end{align*}
\begin{align*}
\Phi_{n,N}
 : 
 ~ ~ ~
 \mathbb{R}^n 
&
\to 
     \HH_K 
     \\
     \begin{pmatrix}
         \alpha_1 \\
         \vdots \\
         \alpha_n
     \end{pmatrix}
&
     \mapsto 
     \sum_{i=1}^n \alpha_i K_{x_{N,i}},
\end{align*}
\begin{align*}
L_n
 : 
 ~ ~ ~
    \HH_K 
&
\to 
     \HH_K 
     \\
    f
&
     \mapsto 
    \frac{1}{n}
    \sum_{i=1}^n 
    f(x_i) 
    K_{x_i}
\end{align*}
and
\begin{align*}
L_{n,N}
 : 
 ~ ~ ~
    \HH_K 
&
\to 
     \HH_K 
     \\
    f
&
     \mapsto 
    \frac{1}{n}
    \sum_{i=1}^n 
    f(x_{N,i}) 
    K_{x_{N,i}}.
\end{align*}

We can check that $L_n$ and $ L_{n,N} $ are semi-definite positive and self-adjoint on $\HH_K$. 
Then the next lemma is used for instance in \cite{meunier2022distribution} and can be checked directly. We \textcolor{black}{recall} $ Y_{[n]} = (Y_1 , \ldots , Y_n)^\top $.

\begin{lemma} \label{lem:expression:hatf}
    We have, with  $ \mathrm{id} $ the identity operator,
    \[
\hat{f}_n 
=
\left( 
L_n + \lambda id
\right)^{-1}
\frac{ \Phi_n }{n}
Y_{[n]}
    \]
    and
    \[
\hat{f}_{n,N}
=
\left( 
L_{n,N} + \lambda id
\right)^{-1}
\frac{ \Phi_{n,N} }{n}
Y_{[n]}.
    \]
\end{lemma}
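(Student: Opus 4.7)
My plan is to proceed by the first-order optimality condition for the strictly convex quadratic functional $R_n$ on $\HH_K$ (and similarly for $R_{n,N}$). Since $R_n$ is strictly convex and coercive on $\HH_K$ (thanks to the $\lambda \|f\|^2_{\HH_K}$ term with $\lambda > 0$), it admits a unique minimizer, characterized by vanishing Fréchet derivative.

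To compute the derivative, I use the reproducing property $f(x_i) = \langle f, K_{x_i} \rangle_{\HH_K}$, which turns each squared residual $(Y_i - f(x_i))^2$ into a quadratic form in $f$ on $\HH_K$. A direct computation of the Fréchet gradient at $\hat{f}_n$ yields
\[
-\frac{2}{n} \sum_{i=1}^n \bigl(Y_i - \hat{f}_n(x_i)\bigr) K_{x_i} + 2 \lambda \hat{f}_n = 0.
\]
Rearranging, this is
\[
\lambda \hat{f}_n + \frac{1}{n} \sum_{i=1}^n \hat{f}_n(x_i) K_{x_i} \;=\; \frac{1}{n} \sum_{i=1}^n Y_i K_{x_i}.
\]
By the definitions of $L_n$ and $\Phi_n$, the left-hand side equals $(L_n + \lambda \mathrm{id}) \hat{f}_n$ and the right-hand side equals $\frac{1}{n} \Phi_n Y_{[n]}$.

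The final step is to invert $L_n + \lambda \mathrm{id}$. Since $L_n$ is positive semi-definite and self-adjoint on $\HH_K$ (as noted just before the lemma) and $\lambda > 0$, the operator $L_n + \lambda \mathrm{id}$ is bounded below by $\lambda$ and hence boundedly invertible, yielding the stated formula for $\hat{f}_n$. The same argument, replacing $(x_i, L_n, \Phi_n)$ by $(x_{N,i}, L_{n,N}, \Phi_{n,N})$, gives the analogous expression for $\hat{f}_{n,N}$. There is no real obstacle here: the only thing to be careful about is that the gradient computation takes place in the infinite-dimensional Hilbert space $\HH_K$, but the reproducing property reduces this to a routine quadratic calculation.
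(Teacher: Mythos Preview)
Your proof is correct and is precisely the direct verification the paper alludes to: the paper does not give its own proof, stating only that the lemma ``is used for instance in \cite{meunier2022distribution} and can be checked directly.'' Your first-order optimality argument via the reproducing property is the standard way to carry out that check.
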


Let us now bound $ \hat{f}_n - \hat{f}_{n,N} $. We have
\begin{align*}
\left\|
\hat{f}_n - \hat{f}_{n,N}
\right\|_{\HH_K} 
& = 
\left\|
\left( L_n + \lambda \mathrm{id} \right)^{-1}
\frac{\Phi_n}{n} Y_{[n]}
-
\left( L_{n,N} + \lambda \mathrm{id} \right)^{-1}
\frac{\Phi_{n,N}}{n} Y_{[n]}
\right\|_{\HH_K} 
\\
& \leq 
A + B,
\end{align*}
where we let
\[
A =
\left\|
\left( L_{n,N} + \lambda \mathrm{id} \right)^{-1}
\left( 
\frac{\Phi_{n,N}}{n} Y_{[n]}
-
\frac{\Phi_{n}}{n} Y_{[n]}
\right)
\right\|_{\HH_K} 
\]
and 
\[
B =
\left\|
\left[ 
\left( L_{n,N} + \lambda \mathrm{id} \right)^{-1}
-
\left( L_{n} + \lambda \mathrm{id} \right)^{-1}
\right]
\frac{\Phi_{n}}{n} Y_{[n]}
\right\|_{\HH_K}. 
\]

We have
\begin{align*}
    A \leq 
    \frac{1}{\lambda} 
    \left\|
\frac{\Phi_n}{n} Y_{[n]}
-
\frac{\Phi_{n,N}}{n} Y_{[n]}
\right\|_{\HH_K}.
\end{align*}

Hence, using the definition of $Y_{\max,n}$, then the reproducing property, and then Lemma~\ref{lemma:holder:square:exponential}, 
\begin{align} \label{eq:existing:approach:bound:A}
    A & \leq 
     \frac{ Y_{\max,n}}{\lambda n} 
     \sum_{i=1}^n
       \left\|
       K_{x_i}
       -
       K_{x_{N,i}} 
       \right\|_{\HH_K} 
       \\
       & = 
       \frac{ Y_{\max,n}}{\lambda n} 
     \sum_{i=1}^n
\sqrt{
2 F(0) - 2 F(  \| x_i - x_{N,i} \|_{\HH} )
       } . \notag 
       \\
       & \leq 
       \frac{ \sqrt{2} \sqrt{A_F} Y_{\max,n}}{\lambda n} 
     \sum_{i=1}^n
     \| x_i - x_{N,i} \|_{\HH}. \notag
       \end{align}

       Then, with similar arguments
\begin{align*}
B & =
\left\|
\left[ 
\left( L_{n,N} + \lambda \mathrm{id} \right)^{-1}
-
\left( L_{n} + \lambda \mathrm{id} \right)^{-1}
\right]
\frac{\Phi_{n}}{n} Y_{[n]}
\right\|_{\HH_K}
\\
& = 
\left\|
\left( L_{n,N} + \lambda \mathrm{id} \right)^{-1}
\left[ 
 L_{n} 
-
L_{n,N}
\right] 
\left( L_{n} + \lambda \mathrm{id} \right)^{-1}
\frac{\Phi_{n}}{n} Y_{[n]}
\right\|_{\HH_K} 
\\
\text{(Lemma \ref{lem:expression:hatf}:)} ~ ~ & = 
\left\|
\left( L_{n,N} + \lambda \mathrm{id} \right)^{-1}
\left[ 
 L_{n} 
-
L_{n,N}
\right] 
\hat{f}_n
\right\|_{\HH_K}
\\
& \leq 
\frac{1}{\lambda}
\left\|
\left[ 
 L_{n} 
-
L_{n,N}
\right] 
\hat{f}_n
\right\|_{\HH_K}
\\ &=
\frac{1}{\lambda}
\left\|
\frac{1}{n}
\sum_{i=1}^n
\left(
\hat{f}_n( x_i ) K_{x_i}
-
\hat{f}_n( x_{N,i} ) K_{x_{N,i}}
\right)
\right\|_{\HH_K}.
\end{align*}

Then 
\begin{align} \label{eq:existing:approach:bound:B}
B & \leq 
\frac{1}{\lambda n}
\sum_{i=1}^n
\left\|
\hat{f}_n( x_i )
\left(
 K_{x_i}
-
 K_{x_{N,i}} 
 \right) 
 \right\|_{\HH_K} 
 +
 \frac{1}{\lambda n}
\sum_{i=1}^n
\left\|
 \left(
 \hat{f}_n( x_i )
-
 \hat{f}_n( x_{N,i} )
\right)
 K_{x_{N,i}} 
 \right\|_{\HH_K} 
\notag 
\\
& \leq 
\frac{1}{\lambda n}
\sum_{i=1}^n
\|\hat{f}_n\|_{\HH_K} 
\| K_{x_i}
-
 K_{x_{N,i}}  \|_{\HH_K} 
 +
 \frac{1}{\lambda n}
\sum_{i=1}^n
 \|\hat{f}_n\|_{\HH_K} 
 \| K_{x_i}
-
 K_{x_{N,i}}  \|_{\HH_K} . 
\end{align}

Above, for bounding $\left\|
\hat{f}_n( x_i )
\left(
 K_{x_i}
-
 K_{x_{N,i}} 
 \right) \right\|_{\HH_K}$, we have used that $\hat{f}_n( x_i ) = \langle \hat{f}_n,K_{x_i} \rangle_{\HH_K}$, Cauchy-schwarz inequality and that $\|K_{x_i}\|_{\HH_K} = \sqrt{F(0)} = 1$. We have bounded  the quantity  $\left\|
 \left(
 \hat{f}_n( K_{x_i} )
-
 \hat{f}_n( K_{x_{N,i}} )
\right)
 K_{x_{N,i}} \right\|_{\HH_K}$ similarly.

Then, as for handling $A$,
\[
B \leq 
\frac{2 \sqrt{2 } \sqrt{A_F}
\|\hat{f}_n\|_{\HH_K}
}{\lambda n}
\sum_{i=1}^n
\|x_n - x_{N,i} \|_{\HH}.
\]

Hence finally,
\[
\left\|
\hat{f}_n - \hat{f}_{n,N}
\right\|_{\HH_K} 
\leq 
\frac{
\sqrt{2 } \sqrt{A_F}
\left(
2 
\|\hat{f}_n\|_{\HH_K}
+
Y_{\max,n}
\right)
}{\lambda n}
\sum_{i=1}^n
\|x_n - x_{N,i} \|_{\HH}.
\]

For $s \geq 1$, using now Hölder inequality and then 
Condition \ref{condition:near:unbias} and Lemma \ref{lemma:sum:and:power},
\begin{align*}
&\bE 
\left[
\left\|
\hat{f}_n - \hat{f}_{n,N}
\right\|_{\HH_K}^s
\right]^{1/s}
\\
=
&
\frac{
\sqrt{2 } \sqrt{A_F}
\left(
2 
\|\hat{f}_n\|_{\HH_K}
+
Y_{\max,n}
\right)
}{\lambda}
\bE
\left[
\left(
\frac{1}{n}
\sum_{i=1}^n
\|x_n - x_{N,i} \|_{\HH}
\right)^s
\right]^{1/s}
\\
\leq 
&
\frac{
\sqrt{2 } \sqrt{A_F}
\left(
2 
\|\hat{f}_n\|_{\HH_K}
+
Y_{\max,n}
\right)
}{\lambda}
\bE
\left[ 
\frac{1}{n}
\sum_{i=1}^n
\|x_n - x_{N,i} \|_{\HH}^s
\right]^{1/s}
\\
\leq 
&
\frac{
\sqrt{2 } \sqrt{A_F}
\left(
2 
\|\hat{f}_n\|_{\HH_K}
+
Y_{\max,n}
\right)
}{\lambda}
\bE
\left[
\frac{1}{n}
\sum_{i=1}^n
\frac{2 . 2^s c_s}{N^{s/2}}
\right]^{1/s}
\\
=
&
\frac{
2^{1+1/s} c_s^{1/s}
\sqrt{2 } \sqrt{A_F}
\left(
2 
\|\hat{f}_n\|_{\HH_K}
+
Y_{\max,n}
\right)
}{\sqrt{N} \lambda}.
\end{align*}

We thus have the following lemma, given by the proofs in \cite{meunier2022distribution} (see also \cite{szabo2015two,szabo2016learning}).

\begin{lemma} \label{lemma:existing:bound:general:Hilb}
  Under the setting of Theorem \ref{theorem:error:bound}, we have, for all $s \geq 1$,
  \[
  \bE 
\left[
\left\|
\hat{f}_n - \hat{f}_{n,N}
\right\|_{\HH_K}^s
\right]^{1/s}
\leq 
\frac{
2^{1+1/s} c_s^{1/s}
\sqrt{2 } \sqrt{A_F}
\left(
2 
\|\hat{f}_n\|_{\HH_K}
+
Y_{\max,n}
\right)
}{\sqrt{N} \lambda},
  \]
  where $c_s$ is from Condition \ref{condition:near:unbias} and $A_F$ from Lemma \ref{lemma:holder:square:exponential}.
  We recall that here $\bE$ denotes the conditional expectation given $(\mu_i,Y_i)_{i=1}^n$. 
\end{lemma}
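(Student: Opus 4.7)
My plan would be to essentially organize the chain of inequalities developed in the preceding paragraphs of the paper into a clean proof. The starting point is the explicit expressions from Lemma~\ref{lem:expression:hatf}, namely $\hat f_n = (L_n+\lambda\mathrm{id})^{-1} (\Phi_n/n) Y_{[n]}$ and analogously for $\hat f_{n,N}$. I would split the difference by triangle inequality: first the error coming from replacing the data vector $(\Phi_n/n)Y_{[n]}$ by its noisy counterpart (call this $A$), and second the error coming from replacing the operator $L_n+\lambda\mathrm{id}$ by $L_{n,N}+\lambda\mathrm{id}$ (call this $B$).

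For $A$, since $L_{n,N}$ is self-adjoint and positive, the operator norm of $(L_{n,N}+\lambda\mathrm{id})^{-1}$ is bounded by $1/\lambda$. Then $A \leq (1/\lambda) \|(1/n)\sum Y_i(K_{x_i} - K_{x_{N,i}})\|_{\HK} \leq (Y_{\max,n}/\lambda n)\sum_i \|K_{x_i}-K_{x_{N,i}}\|_{\HK}$. The RKHS difference is computed explicitly via $\|K_u-K_v\|_{\HK}^2 = 2F(0) - 2F(\|u-v\|_\HH)$, and Lemma~\ref{lemma:holder:square:exponential} (Hölder continuity of the squared exponential) converts this into $\sqrt{2A_F}\|u-v\|_\HH$. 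For $B$, I would use the resolvent identity $R_2^{-1}-R_1^{-1} = R_2^{-1}(R_1-R_2)R_1^{-1}$, so $B$ factors through $(L_n-L_{n,N})\hat f_n = (1/n)\sum_i[\hat f_n(x_i)K_{x_i} - \hat f_n(x_{N,i})K_{x_{N,i}}]$. Splitting $\hat f_n(x_i)K_{x_i} - \hat f_n(x_{N,i})K_{x_{N,i}} = \hat f_n(x_i)(K_{x_i}-K_{x_{N,i}}) + (\hat f_n(x_i)-\hat f_n(x_{N,i}))K_{x_{N,i}}$, applying the reproducing property and Cauchy--Schwarz to bound $|\hat f_n(x_i)|$ and $|\hat f_n(x_i)-\hat f_n(x_{N,i})|$ by $\|\hat f_n\|_{\HK}$ times the appropriate $\|K\|$ quantities, and using $\|K_{x_{N,i}}\|_{\HK}=1$, leads to $B \leq (2\sqrt{2A_F}\|\hat f_n\|_{\HK}/\lambda n)\sum_i\|x_i-x_{N,i}\|_\HH$.

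Combining $A$ and $B$ yields $\|\hat f_n-\hat f_{n,N}\|_{\HK} \leq (\sqrt{2A_F}(2\|\hat f_n\|_{\HK}+Y_{\max,n})/\lambda n)\sum_i\|x_i-x_{N,i}\|_\HH$. It then remains to take the $s$-th moment. By Jensen/Hölder inequality, $\bE[((1/n)\sum_i\|x_i-x_{N,i}\|_\HH)^s] \leq (1/n)\sum_i \bE\|x_i-x_{N,i}\|_\HH^s$. Condition~\ref{condition:near:unbias} with $x_{N,i}-x_i = a_{N,i}+b_{N,i}$ and the triangle inequality $(|u+v|^s \leq 2^{s-1}(|u|^s+|v|^s)$, i.e.\ Lemma~\ref{lemma:sum:and:power}) give $\bE\|x_{N,i}-x_i\|_\HH^s \leq 2^{s-1}(c_s/N^{s/2} + c_s/N^s) \leq 2^s c_s/N^{s/2}$ (using $N\geq 1$). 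Taking $s$-th root and collecting constants produces the factor $2^{1+1/s}c_s^{1/s}/\sqrt N$ appearing in the bound.

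The plan is essentially mechanical; the only mildly delicate points are (i) remembering to split $B$ into the two contributions before applying the reproducing property, since $\hat f_n(x_i)$ and $\hat f_n(x_{N,i})$ are not directly controlled without the Cauchy--Schwarz step against $K_{x_i}-K_{x_{N,i}}$, and (ii) keeping track of which inequalities are dimensionless (the operator bound by $1/\lambda$, the kernel self-norm equal to $1$) so that the final constant depends only on $c_s$, $A_F$, $s$, and not on any additional kernel-dependent quantity. No new ideas beyond the explicit formula for $\hat f_n$, the resolvent identity, the reproducing property, and the near-unbiased moment bounds are needed, which is precisely the point that Remark~\ref{remark:comment:regarding:existing} makes when contrasting this proof with the more delicate one of Theorem~\ref{theorem:error:bound}.
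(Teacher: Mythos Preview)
Your proposal is correct and follows essentially the same approach as the paper: the explicit formulas from Lemma~\ref{lem:expression:hatf}, the same $A+B$ split via the resolvent identity, the same two-term decomposition of $\hat f_n(x_i)K_{x_i}-\hat f_n(x_{N,i})K_{x_{N,i}}$ in $B$, the reproducing property with Cauchy--Schwarz, Lemma~\ref{lemma:holder:square:exponential} for the $\sqrt{2A_F}$ factor, and finally Jensen together with Condition~\ref{condition:near:unbias} and Lemma~\ref{lemma:sum:and:power} for the $s$-th moment. The only cosmetic difference is that you invoke the sharper $2^{s-1}$ convexity bound where the paper uses the cruder $2^s$ from Lemma~\ref{lemma:sum:and:power}, which would actually give you a slightly smaller constant than the stated $2^{1+1/s}$; this is harmless.
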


More precisely, the arguments in \cite{meunier2022distribution} that correspond to the proofs of Lemma \ref{lemma:existing:bound:general:Hilb} are given between (29) and (35) in this reference.
For \cite{szabo2016learning}, these arguments are given in particular in Sections 7.1.1 and 7.2.2. 
For \cite{szabo2015two}, these arguments are given in particular in Section A.1.11.

\subsection{Proof of Theorem \ref{theorem:error:bound}}
\label{section:proof:main:error:bound}

\subsubsection{Starting the bound}

Using Lemma \ref{lemma:convex:gradient}, we obtain 
\begin{align*}
 \lambda \| \hat{f}_n - \hat{f}_{n,N} \|_{\HK}^2 
\leq &
\frac{1}{n} 
\sum_{i=1}^n 
( \hat{f}_n - \hat{f}_{n,N} ) (  x_{N,i} ) 
\hat{f}_n( x_{N,i} ) 
- 
( \hat{f}_n - \hat{f}_{n,N} ) (  x_i ) 
\hat{f}_n( x_i ) 
\\
& + 
\frac{1}{n} 
\sum_{i=1}^n 
Y_i 
\left( 
( \hat{f}_n - \hat{f}_{n,N} ) (  x_i ) 
-
( \hat{f}_n - \hat{f}_{n,N} ) (  x_{N,i} ) 
\right) 
\\
= &
\frac{1}{n} 
\sum_{i=1}^n 
\left[
( \hat{f}_n - \hat{f}_{n,N} ) (  x_{N,i} ) 
- 
( \hat{f}_n - \hat{f}_{n,N} ) (  x_i ) 
\right] 
\hat{f}_n( x_i ) 
\\
& + 
\frac{1}{n} 
\sum_{i=1}^n 
( \hat{f}_n - \hat{f}_{n,N} ) (  x_{N,i} ) 
( \hat{f}_n (  x_{N,i} )   - \hat{f}_{n}  (  x_i ) ) 
\\
& + 
\frac{1}{n} 
\sum_{i=1}^n 
Y_i 
\left( 
( \hat{f}_n - \hat{f}_{n,N} ) (  x_i ) 
-
( \hat{f}_n - \hat{f}_{n,N} ) (  x_{N,i} ) 
\right)  
\\
= &
\underbrace{
\frac{1}{n} 
\sum_{i=1}^n
\hat{f}_n( x_i ) 
\left[
( \hat{f}_n - \hat{f}_{n,N} ) (  x_{N,i} ) 
- 
( \hat{f}_n - \hat{f}_{n,N} ) (  x_i ) 
\right]}_{= C}
\\
& + 
\underbrace{
\frac{1}{n} 
\sum_{i=1}^n 
Y_i 
\left[ 
( \hat{f}_n - \hat{f}_{n,N} ) (  x_i ) 
-
( \hat{f}_n - \hat{f}_{n,N} ) (  x_{N,i} ) 
\right]}_{= B}  
\\
& + 
\underbrace{
\frac{1}{n} 
\sum_{i=1}^n 
( \hat{f}_n - \hat{f}_{n,N} ) (  x_i ) 
( \hat{f}_n (  x_{N,i} )   - \hat{f}_{n}  (  x_i ) ) }_{= D}
\\
& + 
\underbrace{
\frac{1}{n} 
\sum_{i=1}^n 
\left[ 
( \hat{f}_n - \hat{f}_{n,N} ) (  x_{N,i} )
-
( \hat{f}_n - \hat{f}_{n,N} ) (  x_i )
\right]
\left[ 
 \hat{f}_n (  x_{N,i} ) 
 -
 \hat{f}_n (  x_i ) 
\right]}_{= A}.  
\end{align*}

Recall the notation of the statement of Theorem \ref{theorem:error:bound}, $c_n = \| \hat{f}_n \|_{\HK}$ and $Y_{\max,n} = \max_{i=1,\ldots,n} |Y_i|$. For the rest of the proof, let us also introduce the notation $T_{n,N} = \| \hat{f}_n - \hat{f}_{n,N}\|_{\HK}$. We also let $\cte$ be a quantity that does not depend on $n$, $N$, $\lambda$, $\mu_1,\ldots,\mu_n$, $Y_1 , \ldots , Y_n$, and which value is allowed to change from occurrence to occurrence. 

\subsubsection{Bounding $\mathbb{E} A$}

We have
\begin{align*}
A = &
\frac{1}{n} 
\sum_{i=1}^n 
\left[ 
( \hat{f}_n - \hat{f}_{n,N} ) (  x_{N,i} )
-
( \hat{f}_n - \hat{f}_{n,N} ) (  x_i )
\right]
\left[ 
 \hat{f}_n (  x_{N,i} ) 
 -
 \hat{f}_n (  x_i ) 
\right]
\\
= &
\frac{1}{n} 
\sum_{i=1}^n 
\left\langle \hat{f}_n - \hat{f}_{n,N} , 
K_{x_{N,i}}
-
K_{  x_i } 
\right\rangle_{\HK}
\left\langle 
 \hat{f}_n,
 K_{x_{N,i}}
-
K_{  x_i } 
\right\rangle_{\HK}.
\end{align*}
Hence, using the Cauchy-Schwarz inequality,
\begin{align*}
|A|
\leq &  
\frac{1}{n} 
\sum_{i=1}^n 
\| \hat{f}_n - \hat{f}_{n,N}\|_{\HK}
\| \hat{f}_n \|_{\HK}
\|K_{x_{N,i}}
-
K_{  x_i }  \|_{\HK}^2.
\end{align*}

Hence, again using Cauchy-Schwarz, and with similar arguments as in Section  \ref{subsection:existing:bounds}, 
\[
\bE  |A| 
\leq 
\frac{\cte c_n}{N}
\sqrt{\bE[ T_{n,N}^2 ]}.
\]

\subsubsection{Bounding $\mathbb{E} B$} \label{subsubsection:bounding:B}

\begin{lemma} \label{lemma:hatfnN:moins:hatfnNi}
Let $s \geq 1$. 
There is a constant $c_1$ such that the following holds. 
For $i = 1, \ldots , n$,
let $\bar{f}_{n,N,i}$ be defined as $\hat{f}_{n,N}$ but with $x_{N,i}$ replaced by $x_i$. 
Then,
\[
\bE^{1/s} 
\left[
\| \hat{f}_{n,N} - \bar{f}_{n,N,i}
\|_{\HH_K}^s 
\right]
\leq 
\frac{
c_1(Y_{\max,n} + c_n)
}{
\lambda n \sqrt{N}
}
+
\frac{
c_1(Y_{\max,n} + c_n)
}{
\lambda^2 n N
}.
\]
\end{lemma}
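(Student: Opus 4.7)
The plan is to follow the decomposition strategy of Section \ref{subsection:existing:bounds}, exploiting the fact that $\hat{f}_{n,N}$ and $\bar{f}_{n,N,i}$ differ only in the $i$-th input (so that the familiar sums over $j=1,\ldots,n$ collapse to a single term), and then exploiting a crucial independence property.

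First, I would apply Lemma \ref{lem:expression:hatf} to both regressors, letting $\bar L_{n,N,i}$ and $\bar \Phi_{n,N,i}$ denote the versions of $L_{n,N}$ and $\Phi_{n,N}$ where $x_{N,i}$ is replaced by $x_i$. Writing
\[
\hat{f}_{n,N} - \bar{f}_{n,N,i} = A_i + B_i,
\]
with
\[
A_i = (L_{n,N} + \lambda \mathrm{id})^{-1}\Bigl(\tfrac{\Phi_{n,N}}{n}Y_{[n]} - \tfrac{\bar\Phi_{n,N,i}}{n}Y_{[n]}\Bigr),\quad
B_i = \bigl[(L_{n,N} + \lambda \mathrm{id})^{-1} - (\bar L_{n,N,i} + \lambda \mathrm{id})^{-1}\bigr]\tfrac{\bar\Phi_{n,N,i}}{n}Y_{[n]},
\]
only the $i$-th summand of $\Phi_{n,N}Y_{[n]}$ contributes to $A_i$, and $\bar L_{n,N,i} - L_{n,N}$ differs from zero only in the $i$-th term of its defining sum. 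Mimicking the arguments around \eqref{eq:existing:approach:bound:A}--\eqref{eq:existing:approach:bound:B} in Section \ref{subsection:existing:bounds} then yields the pointwise bounds
\[
\|A_i\|_{\HK} \le \tfrac{\mathrm{cst}\, Y_{\max,n}}{\lambda n}\|x_{N,i}-x_i\|_{\HH},\qquad
\|B_i\|_{\HK} \le \tfrac{\mathrm{cst}\,\|\bar f_{n,N,i}\|_{\HK}}{\lambda n}\|x_{N,i}-x_i\|_{\HH},
\]
using Lemma \ref{lemma:holder:square:exponential} for the kernel Hölder control and the resolvent identity $(L_{n,N}+\lambda)^{-1}-(\bar L_{n,N,i}+\lambda)^{-1} = (L_{n,N}+\lambda)^{-1}(\bar L_{n,N,i}-L_{n,N})(\bar L_{n,N,i}+\lambda)^{-1}$, together with the defining identity $(\bar L_{n,N,i}+\lambda\mathrm{id})^{-1}\tfrac{\bar\Phi_{n,N,i}}{n}Y_{[n]} = \bar f_{n,N,i}$.

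The key observation, and what makes the bound sharp enough to match the statement, is that $\bar f_{n,N,i}$ is a deterministic function of $(x_j)$ and of $(x_{N,j})_{j\neq i}$, hence is independent of $x_{N,i}$ given $(\mu_j,Y_j)_{j=1}^n$. Consequently the two factors in $\|B_i\|_{\HK}$ factorize in expectation, so that
\[
\bE_n^{1/s}\bigl[\|B_i\|_{\HK}^s\bigr] \le \tfrac{\mathrm{cst}}{\lambda n}\,\bE_n^{1/s}\bigl[\|\bar f_{n,N,i}\|_{\HK}^s\bigr]\,\bE_n^{1/s}\bigl[\|x_{N,i}-x_i\|_{\HH}^s\bigr].
\]
Condition \ref{condition:near:unbias} controls the second $\bE_n^{1/s}$ by $\mathrm{cst}/\sqrt N$. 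For the first, I would compare $\bar f_{n,N,i}$ to $\hat f_n$: these two regressors differ in exactly $n-1$ perturbed inputs, so the argument that produced Lemma \ref{lemma:existing:bound:general:Hilb} applies with only the $i$-th term dropped from the final Hölder sum, giving $\bE_n^{1/s}[\|\bar f_{n,N,i}-\hat f_n\|_{\HK}^s] \le \mathrm{cst}(c_n+Y_{\max,n})/(\lambda\sqrt N)$, hence $\bE_n^{1/s}[\|\bar f_{n,N,i}\|_{\HK}^s] \le c_n + \mathrm{cst}(c_n+Y_{\max,n})/(\lambda\sqrt N)$.

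Combining, the $A_i$ part contributes $\mathrm{cst}\, Y_{\max,n}/(\lambda n\sqrt N)$ and the $B_i$ part contributes $\mathrm{cst}\, c_n/(\lambda n\sqrt N) + \mathrm{cst}(c_n+Y_{\max,n})/(\lambda^2 n N)$, which sum to exactly the claimed bound. The main subtlety to get right is the independence argument in step four: without it, one is forced to use the deterministic a priori bound $\|\bar f_{n,N,i}\|_{\HK}\le Y_{\max,n}/\sqrt\lambda$, which produces an extra $\lambda^{-1/2}$ and fails to reproduce the target dependence on $\lambda$ and $c_n$. A secondary point of care is verifying that the adaptation of Lemma \ref{lemma:existing:bound:general:Hilb} to the $n-1$-perturbation setting genuinely conditions only on $(x_j)$ and $(x_{N,j})_{j\neq i}$, so that the conditional expectation factorization used above is legitimate.
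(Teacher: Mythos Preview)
Your proof is correct and reaches the stated bound, but it takes a genuinely different route from the paper's. The paper does not use the explicit operator expression (Lemma~\ref{lem:expression:hatf}) here at all; instead it applies Lemma~\ref{lemma:convex:gradient} (the convex-gradient inequality) with only the $i$-th linear form perturbed, obtaining a pointwise bound of the form
\[
\| \hat{f}_{n,N} - \bar{f}_{n,N,i}\|_{\HH_K} \le \frac{\mathrm{cst}\,(Y_{\max,n}+\|\hat{f}_{n,N}\|_{\HK})}{\lambda n}\,\|K_{x_i}-K_{x_{N,i}}\|_{\HK}.
\]
Note that the paper's bound features $\|\hat{f}_{n,N}\|_{\HK}$ rather than $\|\bar f_{n,N,i}\|_{\HK}$. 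The paper then writes $\|\hat{f}_{n,N}\|_{\HK}\le c_n + \|\hat f_{n,N}-\hat f_n\|_{\HK}$ and, because $\hat f_{n,N}-\hat f_n$ is \emph{not} independent of $x_{N,i}$, applies Cauchy--Schwarz together with Lemma~\ref{lemma:existing:bound:general:Hilb} at moment order $2s$. Your route, by contrast, isolates $\|\bar f_{n,N,i}\|_{\HK}$, which \emph{is} independent of $x_{N,i}$, so the expectation of the product factorizes exactly; you then control $\|\bar f_{n,N,i}\|_{\HK}$ by an $(n-1)$-perturbation adaptation of Lemma~\ref{lemma:existing:bound:general:Hilb}. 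Both arguments land on the same bound. Yours stays entirely within the ``explicit formula'' framework of Section~\ref{subsection:existing:bounds} and is arguably more elementary; the paper's approach instead showcases Lemma~\ref{lemma:convex:gradient}, which is the central device in the proof of Theorem~\ref{theorem:error:bound}, so its use here doubles as a warm-up. A minor practical advantage of your independence factorization is that it needs only $s$-th moments in the intermediate step rather than $2s$-th, though this is immaterial here since Condition~\ref{condition:near:unbias} supplies all moments.
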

\begin{proof}
Let us use Lemma \ref{lemma:convex:gradient}
with, for $j=1,\ldots,n$, $j \neq i$, $\ell_j(h) = \tilde{\ell}_j(h) = h(x_{N,j})$, $\ell_i(h) = h(x_{N,i})$, $\tilde{\ell}_i(h) = h(x_{i})$, $f = \hat{f}_{n,N}$ and $g = \bar{f}_{n,N,i}$. Using also the definition $Y_{\max,n}$, we have 
    \begin{align*}
&
\lambda \| \hat{f}_{n,N} - \bar{f}_{n,N,i}
\|_{\HH_K}^2
\\
\leq &
\frac{1}{n} 
\left( 
\hat{f}_{n,N} - \bar{f}_{n,N,i}
\right)
(x_{i}) 
\hat{f}_{n,N}(x_{i})
-
\frac{1}{n}
\left( 
\hat{f}_{n,N} - \bar{f}_{n,N,i}
\right)
(x_{N,i}) 
\hat{f}_{n,N}(x_{N,i})
\\
 & +
 \frac{1}{n} 
 Y_i 
 \left[
 \left(\hat{f}_{n,N} - \bar{f}_{n,N,i} \right)
 (x_{N,i})
-
 \left(\hat{f}_{n,N} - \bar{f}_{n,N,i} \right)
 (x_{i})
 \right]
 \\
 = &
 \frac{1}{n} 
\left( 
\hat{f}_{n,N} - \bar{f}_{n,N,i}
\right)
(x_{i}) 
\hat{f}_{n,N}(x_{i})
-
 \frac{1}{n} 
\left( 
\hat{f}_{n,N} - \bar{f}_{n,N,i}
\right)
(x_{i}) 
\hat{f}_{n,N}(x_{N,i})
\\
&
+
 \frac{1}{n} 
\left( 
\hat{f}_{n,N} - \bar{f}_{n,N,i}
\right)
(x_{i}) 
\hat{f}_{n,N}(x_{N,i})
-
\frac{1}{n} 
\left( 
\hat{f}_{n,N} - \bar{f}_{n,N,i}
\right)
(x_{N,i}) 
\hat{f}_{n,N}(x_{N,i})
\\
 & +
 \frac{1}{n} 
 Y_i 
 \left[
 \left(\hat{f}_{n,N} - \bar{f}_{n,N,i} \right)
 (x_{N,i})
-
 \left(\hat{f}_{n,N} - \bar{f}_{n,N,i} \right)
 (x_{i})
 \right]
 \\
 \leq &
 \frac{ \| \hat{f}_{n,N} - \bar{f}_{n,N,i}
\|_{\HH_K}}{n}
\left(
2
\|\hat{f}_{n,N}\|_{\HH_K} 
.
\|
K_{  x_{i} }
-
K_{  x_{N,i} }  \|_{\HK}  
+
Y_{\max,n}
\|
K_{  x_{N,i} }
-
K_{  x_{i} }  \|_{\HK}  
\right).
    \end{align*}
Hence
\begin{align*}
\| \hat{f}_{n,N} - \bar{f}_{n,N,i}
\|_{\HH_K}
\leq &
\frac{2
\|\hat{f}_{n,N}\|_{\HH_K} 
}{ \lambda n}
\|
K_{  x_{i} }
-
K_{  x_{N,i} }  \|_{\HK}  
+
\frac{ Y_{\max,n}}{ \lambda n }
\|
K_{  x_{N,i} }
-
K_{  x_{i} }  \|_{\HK}  
\\ 
\leq &
\frac{2
(c_n + Y_{\max,n}) 
}{ \lambda n}
\|
K_{  x_{i} }
-
K_{  x_{N,i} }  \|_{\HK}  
+
\frac{2
\| \hat{f}_{n,N} - \hat{f}_n
\|_{\HH_K}
}{ \lambda n}
\|
K_{  x_{i} }
-
K_{  x_{N,i} }  \|_{\HK}.  
\end{align*}

We then use the Cauchy-Schwarz inequality, together with Lemma \ref{lemma:existing:bound:general:Hilb} and Condition \ref{condition:near:unbias}, which yields
    \begin{align*}
        \bE 
        \left[
\| \hat{f}_{n,N} - \bar{f}_{n,N,i}
\|_{\HH_K}^s 
\right]
\leq 
\left(
\frac{
\cte
(c_n + Y_{\max,n}) 
}{ \lambda n \sqrt{N}}
\right )^s 
+
\left(
\frac{\cte}{\lambda n}
\right)^s
\sqrt{
\frac{
\left(
c_n
+
Y_{\max,n}
\right)^{2s}
}{N^s \lambda^{2s}}
}
\sqrt{
\frac{1}{N^s}
}.
    \end{align*}
    Re-arranging this last bound concludes the proof.
\end{proof}

We have
\begin{align} \label{eq:coupling:un}
B
= &  
\frac{1}{n} 
\sum_{i=1}^n 
Y_i 
\left[ 
( \hat{f}_n - \hat{f}_{n,N} ) (  x_i ) 
-
( \hat{f}_n - \hat{f}_{n,N} ) (  x_{N,i} ) 
\right].
\end{align}

Let us define $\tilde{f}_{n,N}$ in the same way as $\hat{f}_{n,N}$, but where $(x_{N,1} , \ldots , x_{N,n})$ is replaced by an independent copy
(with the same distribution) $(\tilde{x}_{N,1} , \ldots , \tilde{x}_{N,n})$. Assume also that $(\tilde{x}_{N,1} , \ldots , \tilde{x}_{N,n})$ is chosen as being stochastically independent from $(x_{N,i},a_{N,i},b_{N,i})_{i=1}^n$ (from Condition \ref{condition:near:unbias}).

Then, for $i = 1, \ldots , n$,
let $\tilde{f}_{n,N,i}$ be defined as $\tilde{f}_{n,N}$ but with $\tilde{x}_{N,i}$ replaced by $x_{N,i}$.
With these definitions, the variable
$( \hat{f}_n - \hat{f}_{n,N} ) (  x_i ) 
-
( \hat{f}_n - \hat{f}_{n,N} ) (  x_{N,i} ) $
has the same distribution as the variable
$(   \hat{f}_{n} - \tilde{f}_{n,N,i} ) (  x_i ) 
-
(  \hat{f}_{n} - \tilde{f}_{n,N,i} ) (  x_{N,i} ) $.
Indeed, both variables are of the form $( \hat{f}_n - g ) (  x_i ) 
-
( \hat{f}_n - g ) (  z_{N,i} ) $ where $g$ is computed from $(z_{N,j})_{j=1}^n$, that are distributed as $(x_{N,j})_{j=1}^n$. 

Hence
\begin{equation} \label{eq:coupling:deux}
\bE B 
=
\bE
\left[ 
\frac{1}{n} 
\sum_{i=1}^n 
Y_i 
\left(
( \hat{f}_n -  \tilde{f}_{n,N,i} ) (  x_i ) 
-
( \hat{f}_n -  \tilde{f}_{n,N,i} ) (  x_{N,i} )
\right)
\right].
\end{equation}

Then we have
\begin{align*}
\bE B 
= &
\bE
\left[
\underbrace{ 
\frac{1}{n} 
\sum_{i=1}^n 
Y_i 
\left(
( \hat{f}_n -  \tilde{f}_{n,N} ) (  x_i ) 
-
( \hat{f}_n -  \tilde{f}_{n,N} ) (  x_{N,i} ) 
\right)
}_{= B_2}
\right]
\\
& +
\bE
\left[ 
\underbrace{
\frac{1}{n} 
\sum_{i=1}^n 
Y_i
\left(
(  \tilde{f}_{n,N} -  \tilde{f}_{n,N,i} ) (  x_i ) 
-
( \tilde{f}_{n,N} -  \tilde{f}_{n,N,i} ) (  x_{N,i} ) 
\right)
}_{= B_1}
\right].
\end{align*}

We have using Cauchy-Schwarz and Condition \ref{condition:near:unbias}, 

\begin{align*}
\bE
 |B_1| 
\leq &
Y_{\max,n} 
\max_{i=1,\ldots,n}
\bE 
\left[ 
\| \tilde{f}_{n,N} -  \tilde{f}_{n,N,i}
\|_{\HK}
\| 
K_{x_i}
-
K_{x_{N,i}}
\|_{\HK}
\right] 
\\ 
\leq &
\frac{ \cte Y_{\max,n} }{ \sqrt{N} } 
\max_{i=1,\ldots,n}
\sqrt{
\bE 
\left[ 
\| 
\tilde{f}_{n,N} -  \tilde{f}_{n,N,i}
\|_{\HK}^2
\right] 
}.
\end{align*}

Note that 
\begin{equation} \label{eq:trick:loo}
( \tilde{f}_{n,N} -  \tilde{f}_{n,N,i} )
=
 \tilde{f}_{n,N} -  \tilde{f}_{n,N,-i} 
+
 \tilde{f}_{n,N,-i} -  \tilde{f}_{n,N,i} 
\end{equation}
where $\tilde{f}_{n,N,-i} $ is computed as $\tilde{f}_{n,N}$ but with $\tilde{x}_{N,i}$ replaced by $x_i$. Both random quantities $ \tilde{f}_{n,N} -  \tilde{f}_{n,N,-i} $ and 
$ \tilde{f}_{n,N,i} - \tilde{f}_{n,N,-i}  $
have the same distribution as the quantity 
$\hat{f}_{n,N} - \bar{f}_{n,N,i}$ in Lemma \ref{lemma:hatfnN:moins:hatfnNi}. 
Hence, from Lemma \ref{lemma:hatfnN:moins:hatfnNi}, we have
\begin{align} \label{eq:Bdeux}
\bE
 |B_1| 
\leq &
\frac{ \cte Y_{\max,n} }{ \sqrt{N} } 
\frac{
c_1(Y_{\max,n} + c_n)
}{
\lambda n \sqrt{N}
}
+
\frac{ \cte Y_{\max,n} }{ \sqrt{N} } 
\frac{
c_1(Y_{\max,n} + c_n)
}{
\lambda^2 n N
} 
\notag
\\
= &
\frac{
\cte  Y_{\max,n} (Y_{\max,n} + c_n)
}{
\lambda n N
}
+
\frac{
\cte Y_{\max,n} (Y_{\max,n} + c_n)
}{
\lambda^2 n N^{3/2}
}.
\end{align}

Then, consider 
\[
B_2 
=
\frac{1}{n} 
\sum_{i=1}^n 
Y_i 
\left(
( \hat{f}_n -  \tilde{f}_{n,N} ) (  x_i ) 
-
( \hat{f}_n -  \tilde{f}_{n,N} ) (  x_{N,i} )
\right).
\]

For $i=1,\ldots,n$, we apply Lemma \ref{lemma:hadamard:diff:Hilbert} with $f$ there given by $\hat{f}_n - \tilde{f}_{n,N}$. This gives,
\[
( \hat{f}_n -  \tilde{f}_{n,N} ) (  x_{N,i} ) 
-
( \hat{f}_n - \tilde{f}_{n,N} ) (  x_i ) 
=
\psi_{N,i}( x_{N,i} - x_{i} )
+
r_{N,i},
\]
where $\psi_{N,i} $ is linear continuous and satisfies, for $x$ with $\|x\|_{\HH}  =1 $, $| \psi_{N,i}(x) | \leq \cte \|\hat{f}_n - \tilde{f}_{n,N}\|_{\HH_K}$, and where $| r_{N,i}| \leq  \cte \|\hat{f}_n - \tilde{f}_{n,N}\|_{\HH_K} \|x_{N,i} - x_{i}\|_{\HH} ^2 $.

This gives
\[
\bE 
|B_2|
\leq
\bE
\left[
\left|
\underbrace{
\frac{1}{n} 
\sum_{i=1}^n 
Y_i 
\psi_{N,i}( x_{N,i} - x_{i} )
}_{= B_{22}}
\right| 
\right]
+
\bE
\left[
\left|
\underbrace{
\frac{1}{n} 
\sum_{i=1}^n 
Y_i 
r_{N,i}
}_{= B_{21}}
\right| 
\right].
\]
We have
\[
\bE
|B_{21}|
\leq 
\cte Y_{\max,n}
\max_{i=1,\ldots,n}
\bE
\left[
\|\hat{f}_n - \tilde{f}_{n,N}\|_{\HH_K} \|x_{N,i} - x_{i}\|_{\HH} ^2
\right].
\]

Note that $\hat{f}_n - \tilde{f}_{n,N}$ has the same distribution as
$\hat{f}_n - \hat{f}_{n,N}$. With the Cauchy-Schwarz inequality and Condition \ref{condition:near:unbias}, this yields
\begin{equation} \label{eq:Bundeux}
\bE |B_{21}|
\leq 
\frac{\cte Y_{\max,n}
}{N}
\sqrt{
\bE
\left[
T_{n,N}^2
\right]
}.
\end{equation}

Then, for $B_{22}$, we apply Condition \ref{condition:near:unbias} which gives, with $a_{N,i}$ and $b_{N,i}$ defined in this condition, 
\[
\bE 
| B_{22} |
\leq 
\bE 
\left[ 
\left|
\underbrace{
\frac{1}{n} 
\sum_{i=1}^n 
Y_i 
\psi_{N,i}(
a_{N,i})
}_{= B_{222}}
\right| 
\right]
+
\bE 
\left[ 
\left|
\underbrace{
\frac{1}{n} 
\sum_{i=1}^n 
Y_i 
\psi_{N,i}(b_{N,i})
}_{= B_{221}}
\right| 
\right].
\]
We have, using Cauchy-Schwarz, and the bound on $b_{N,i}$ in  Condition \ref{condition:near:unbias},
\begin{equation*}
\bE  |B_{221}| 
\leq 
\frac{\cte Y_{\max,n}}{N}
\sqrt{
\bE
\left[
\|\hat{f}_n - \tilde{f}_{n,N}\|_{\HH_K}^2 
\right]
}.
\end{equation*}

As before, $\hat{f}_n - \tilde{f}_{n,N}$ has the same distribution as
$\hat{f}_n - \hat{f}_{n,N}$.
This yields
\begin{equation} \label{eq:Bunundeux}
\bE |B_{221}| 
\leq 
\frac{\cte Y_{\max,n}}{N}
\sqrt{
\bE
\left[
T_{n,N}^2 
\right]
}.
\end{equation}

Consider finally $B_{222}$, with
\begin{equation} \label{eq:Bdeuxdeuxdeux}
B_{222}
=
\frac{1}{n} 
\sum_{i=1}^n 
Y_i 
\psi_{N,i}(
a_{N,i}).
\end{equation}

Let 
$\cB$ be the $\sigma$-algebra generated by 
$(\tilde{x}_{N,1} , \ldots , \tilde{x}_{N,n})$. Then $\tilde{f}_{n,N}$ is $\cB$-measurable (recall that $x_1 , \ldots , x_n$ are deterministic in Appendix \ref{appendix:proofs:error:bound:general:hilbert}). Then, for $i = 1, \ldots , n$, also $\psi_{N,i}$ is $\cB$-measurable (as it depends only on $\tilde{f}_{n,N}$, $\hat{f}_n$ and 
$x_i$). On the other hand, $a_{N,i}$ is independent of $\cB$ by definition of $(\tilde{x}_{N,1} , \ldots , \tilde{x}_{N,n})$.

Hence we have, for $i = 1, \ldots , n$, using the Riesz representation theorem,
\[
\bE
\left[
\psi_{N,i}(
a_{N,i})
\right]
=
\bE 
\left[
\bE
\left[
\left.
\psi_{N,i}(
a_{N,i})
\right| 
\cB
\right]
\right]
=0.
\]

Then, also, for $i \neq j$, conditionally to $\cB$, the variables $a_{N,i} $ and $a_{N,j}$ are independent and keep their unconditional distributions.
Thus we have 
\begin{align*}
\bE
\left[
\psi_{N,i}(
a_{N,i})
\psi_{N,j}(
a_{N,j})
\right]
 = &
\bE 
\left[
\bE
\left[
\left.
\psi_{N,i}(
a_{N,i})
\psi_{N,j}(
a_{N,j})
\right| 
\cB
\right]
\right]
\\
= &
\bE 
\left[
\bE
\left[
\left.
\psi_{N,i}(
a_{N,i})
\right| 
\cB
\right]
\bE
\left[
\left.
\psi_{N,j}(
a_{N,j})
\right| 
\cB
\right]
\right]
\\
= & 0.
\end{align*}

Hence we obtain, exploiting again the independence between $\hat{f}_n - \tilde{f}_{n,N}$ and $a_{N,i}$,
\begin{align*}
\bE |B_{222}|
\leq &
Y_{\max,n}
\sqrt{
\frac{1}{n^2} 
\sum_{i=1}^n 
\bE
\left[
(\psi_{N,i}(
a_{N,i}))^2
\right]
}
\notag
\\
 \leq & 
\cte Y_{\max,n}
\sqrt{
\frac{1}{n^2} 
\sum_{i=1}^n 
\bE
\left[
\|\hat{f}_n - \tilde{f}_{n,N}\|_{\HK}^2
\| a_{N,i}\|_{\HH}^2
\right]
}
\notag
\\
 = & 
\cte Y_{\max,n}
\sqrt{
\frac{1}{n^2} 
\sum_{i=1}^n 
\bE
\left[
\|\hat{f}_n - \tilde{f}_{n,N}\|_{\HK}^2
\right]
\bE
\left[
\| a_{N,i}\|_{\HH}^2
\right]
}
\notag
\\
\leq &
\frac{\cte Y_{\max,n}}{\sqrt{n}}
\max_{i=1,\ldots,n} 
\sqrt{
\bE
\left[
\|\hat{f}_n - \tilde{f}_{n,N}\|_{\HK}^2
\right]
\bE
\left[
\| a_{N,i}\|_{\HH}^2
\right]
}
\notag
\\
\leq &
\frac{\cte Y_{\max,n}}{\sqrt{n} \sqrt{N}}
\sqrt{
\bE
\left[
\|\hat{f}_n - \tilde{f}_{n,N}\|_{\HK}^2
\right]}.
\end{align*}

As before, $\hat{f}_n - \tilde{f}_{n,N}$ has the same distribution as
$\hat{f}_n - \hat{f}_{n,N}$.
This yields
\begin{align} \label{eq:Bununun}
\bE |B_{222}|
\leq
\frac{
\cte
Y_{\max,n}
}{
\sqrt{n}
\sqrt{N} }
\sqrt{
\bE\left[
T_{n,N}^2
\right]
}.
\end{align}

Combining \eqref{eq:Bdeux}, \eqref{eq:Bundeux}, \eqref{eq:Bunundeux} and \eqref{eq:Bununun} yields

\begin{align*}
\bE
B
\leq &
\frac{\cte Y_{\max,n} (Y_{\max,n} + c_n)}{\lambda n N}
+
\frac{\cte Y_{\max,n} (c_n + Y_{\max,n})}{\lambda^2 n N^{3/2}}
\\ 
&
+
\frac{\cte Y_{\max,n} }{N} 
\sqrt{
\bE\left[
T_{n,N}^2
\right]
}
+
\frac{\cte Y_{\max,n}}{\sqrt{n} \sqrt{N}}
\sqrt{
\bE
\left[
T_{n,N}^2
\right]
}.
\end{align*}

\subsubsection{Bounding $\mathbb{E} C$}

The term $\bE C$ is handled exactly as $\bE B$ since $Y_i$ (from $B$) is replaced by $\hat{f}_n(x_i)$ (in $C$). When handling $B$ we only used that $Y_i$ is deterministic and bounded by $Y_{\max,n}$. For $C$, we only use that $\hat{f}_n(x_i)$ is deterministic and bounded by $c_n$. Hence we have

\begin{align*}
\bE
C
\leq &
\frac{\cte c_n (Y_{\max,n} + c_n)}{\lambda n N}
+
\frac{\cte c_n (c_n + Y_{\max,n})}{\lambda^2 n N^{3/2}}
\\ 
&
+
\frac{\cte c_n }{N} 
\sqrt{
\bE\left[
T_{n,N}^2
\right]
}
+
\frac{\cte c_n}{\sqrt{n} \sqrt{N}}
\sqrt{
\bE
\left[
T_{n,N}^2
\right]
}.
\end{align*}

\subsubsection{Bounding $\mathbb{E} D$}

Recall 
\[
D = \frac{1}{n} 
\sum_{i=1}^n 
( \hat{f}_n - \hat{f}_{n,N} ) (  x_i ) 
( \hat{f}_n (  x_{N,i} )   - \hat{f}_{n}  (  x_i ) ). 
\]

We use the same definitions $\tilde{f}_{n,N}$ and $\tilde{f}_{n,N,i}$ as for bounding $\bE B$ above. Then, with the same arguments as above, 
\[
\bE D
=
\bE 
\left[
\frac{1}{n} 
\sum_{i=1}^n 
( \hat{f}_n - \tilde{f}_{n,N,i} ) (  x_i ) 
( \hat{f}_n (  x_{N,i} )   - \hat{f}_{n}  (  x_i ) )
\right].
\]

Hence
\begin{align*}
\bE D
= &
\bE 
\left[
\underbrace{
\frac{1}{n} 
\sum_{i=1}^n 
( \hat{f}_n - \tilde{f}_{n,N} ) (  x_i ) 
( \hat{f}_n (  x_{N,i} )   - \hat{f}_{n}  (  x_i ) )
}_{=D_2}
\right]
\\
& + 
 \bE 
 \left[
\underbrace{
\frac{1}{n} 
\sum_{i=1}^n 
(  \tilde{f}_{n,N} - \tilde{f}_{n,N,i}) (  x_i ) 
( \hat{f}_n (  x_{N,i} )   - \hat{f}_{n}  (  x_i ) ) 
}_{=D_1}
\right].
\end{align*}

Using Cauchy-Schwarz, we obtain 
\[
\bE |D_1|
\leq 
c_n 
\sqrt{
\bE 
\left[ 
 \|K_{x_{N,i}} - K_{x_i} \|_{\HK}^2
\right]
}
\max_{i=1,\ldots,n}
\sqrt{
\bE
\left[ 
 \|
 \tilde{f}_{n,N}
 -
 \tilde{f}_{n,N,i}
  \|_{\HK}^2
\right]
}.
\]

Then $\|K_{x_{N,i}} - K_{x_i} \|_{\HK}^2$ above is treated with the 
same arguments as in Section \ref{subsection:existing:bounds}. Also, 
$ \|
 \tilde{f}_{n,N}
 -
 \tilde{f}_{n,N,i}
  \|_{\HK}$ is treated as in \eqref{eq:trick:loo}. This yields
\begin{align} \label{eq:bound:un}
\bE |D_1|
\leq & 
\frac{c_n \cte}{\sqrt{N}} 
\left(
\frac{
c_1(Y_{\max,n} + c_n)
}{
\lambda n \sqrt{N}
}
+
\frac{
c_1(Y_{\max,n} + c_n)
}{
\lambda^2 n N
}
\right) \notag
\\ 
= &
\frac{
\cte c_n (Y_{\max,n} + c_n)
}{
\lambda n N
}
+
\frac{
\cte c_n (Y_{\max,n} + c_n)
}{
\lambda^2 n N^{3/2}
}.
\end{align}

For $i=1,\ldots,n$, we apply Lemma \ref{lemma:hadamard:diff:Hilbert} with $f$ there given by $\hat{f}_n $. This gives
\[
\hat{f}_n (  x_{N,i} )   - \hat{f}_{n}  (  x_i ) 
=
\psi_{N,i}
\left( 
x_{N,i}
-
x_i
\right) 
+
r_{N,i},
\]
where $\psi_{N,i} $ is linear continuous and satisfies, for $x$ with $\|x\|_{\HH} =1 $, $| \psi_{N,i}(x) | \leq \cte \|\hat{f}_n \|_{\HK}$, and where $| r_{N,i}| \leq  \cte \|\hat{f}_n \|_{\HK} \|x_{N,i} - x_i\|_{\HH}^2 $. 
In addition, we apply Condition \ref{condition:near:unbias}, and we can write
\[
x_{N,i} - x_i
=
a_{N,i} + b_{N,i},
\]
with $a_{N,i}$ and $b_{N,i}$ defined in this condition.
Then,
\begin{align*}
D_2 
= & 
\frac{1}{n} 
\sum_{i=1}^n 
( \hat{f}_n - \tilde{f}_{n,N} ) (  x_i ) 
( \hat{f}_n (  x_{N,i} )   - \hat{f}_{n}  (  x_i ) ) 
\\
= &
\underbrace{
\frac{1}{n} 
\sum_{i=1}^n 
( \hat{f}_n - \tilde{f}_{n,N} ) (  x_i ) 
\psi_{N,i}(a_{N,i})
}_{= D_{22}}
\\
& + 
\underbrace{
\frac{1}{n} 
\sum_{i=1}^n 
( \hat{f}_n - \tilde{f}_{n,N} ) (  x_i ) 
\left( 
\psi_{N,i}(b_{N,i})
+ r_{N,i}
\right).
}_{= D_{21}}
\end{align*}

We have, using Cauchy-Schwarz,
\begin{align*}
    \bE |D_{21}|
    \leq &
    \cte
    c_n
    \sqrt{ \bE 
    \left[ 
\| \hat{f}_n - \tilde{f}_{n,N} \|_{\HK}^2 
    \right]
    }
        \max_{i=1,\ldots,n}
    \sqrt{ 
\bE
\left[ 
\|b_{N,i}\|_{\HH}^2
+
\|x_{N,i}
-
x_i
\|^4_{\HH} 
\right]
    }
    \\
    \leq &
    \frac{\cte c_n}{N} 
   \sqrt{ \bE 
    \left[ 
\| \hat{f}_n - \tilde{f}_{n,N} \|_{\HK}^2 
    \right]
    },
\end{align*}
using Condition \ref{condition:near:unbias}. 
As observed when handling $\bE B$ above, $ \hat{f}_n - \tilde{f}_{n,N}$ has the same distribution as $ \hat{f}_n - \hat{f}_{n,N}$. Hence 
\begin{align} \label{eq:bound:Ddeuxun}
    \bE |D_{21}|
    \leq 
    \frac{\cte c_n}{N} 
   \sqrt{ \bE 
    \left[ 
T_{n,N}^2
    \right]
    }.
\end{align}
Finally, consider 
\[
D_{22}
=
\frac{1}{n} 
\sum_{i=1}^n 
( \hat{f}_n - \tilde{f}_{n,N} ) (  x_i ) 
\psi_{N,i}(a_{N,i}).
\]
Above, $\psi_{N,i}$ is deterministic since it is defined from $\hat{f}_n$ and $x_i$. Also, similarly as when handling $\bE B$, $a_{N,i}$ and
$( \hat{f}_n - \tilde{f}_{n,N} ) $ are independent.
Hence, with the same arguments as when handling $B$ above, $D_{22}$ is a sum of decorrelated centered variables.
Hence, we have 
\begin{align*}
\bE [ D_{22}^2 ]
= &
\frac{1}{n^2}
\sum_{i=1}^n 
\bE
\left[ 
\left(
( \hat{f}_n - \tilde{f}_{n,N} ) (  x_i ) 
\right)^2
\left(
\psi_{N,i}(a_{N,i})
\right)^2
\right]
\\
\leq &
\frac{\cte}{n^2}
\sum_{i=1}^n 
\bE
\left[ 
\|
\hat{f}_n - \tilde{f}_{n,N} 
\|_{\HK}^2
c_n^2
\|a_{N,i}\|^2_{\HH}
\right]
\\
= &
\frac{\cte c_n^2}{n^2}
\sum_{i=1}^n
\bE
\left[ 
\|
\hat{f}_n - \tilde{f}_{n,N} 
\|_{\HK}^2
\right]
\bE
\left[ 
\|a_{N,i}\|^2_{\HH}
\right]
\\
\leq &
\frac{\cte c_n^2}{n}
\bE
\left[ 
\| \hat{f}_n - \tilde{f}_{n,N}\|_{\HK}^2
\right]
\frac{1}{N},
\end{align*}
using Condition \ref{condition:near:unbias} at the end. Hence,
\begin{equation} \label{eq:D:deux:deux}
\bE  |D_{22}| 
\leq 
\frac{\cte c_n}{\sqrt{n} \sqrt{N}}
\sqrt{
\bE
\left[ 
 T_{n,N}^2
\right]
}.
\end{equation}

Combining \eqref{eq:bound:un}, \eqref{eq:bound:Ddeuxun} and \eqref{eq:D:deux:deux}, we obtain 
\begin{align*}
\bE D
\leq & 
\frac{
\cte c_n (Y_{\max,n} + c_n)
}{
\lambda n N
}
+
\frac{
\cte c_n (Y_{\max,n} + c_n)
}{
\lambda^2 n N^{3/2}
}
\\
& +
    \frac{\cte c_n}{N} 
   \sqrt{ \bE 
    \left[ 
T_{n,N}^2
    \right]
    }
    +
    \frac{\cte c_n}{\sqrt{n} \sqrt{N}}
\sqrt{
\bE
\left[ 
 T_{n,N}^2
\right]
}.
\end{align*}

\subsubsection{Completing the proof}

Combining the bounds on $\bE A$, $\bE B$, $\bE C$ and $\bE D$ yields
\begin{align*}
    \lambda 
    \bE [ T_{n,N}^2]
   \leq &
   \sqrt{\bE [ T_{n,N}^2 ]} 
   \left( 
\frac{\cte (c_n + Y_{\max,n}) }{N} 
+
\frac{\cte (Y_{\max,n} + c_n)}{\sqrt{n} \sqrt{N}} 
   \right) 
   \\
   & + 
   \frac{\cte  (Y_{\max,n} + c_n)^2}{\lambda n N}
+
\frac{\cte  (c_n + Y_{\max,n})^2}{\lambda^2 n N^{3/2}}.
\end{align*}

  For $x ,a ,b \geq 0$, if $x^2 \leq a x +b$ then $x \leq \max(2 a , b/a)$. This is seen by separating the cases $x \geq b/a$ and $x \leq b/a$. Hence

\begin{align*}
\sqrt{    \bE [ T_{n,N}^2]}
   \leq &
\frac{\cte (c_n + Y_{\max,n}) }{ \lambda N} 
+
\frac{\cte (Y_{\max,n} + c_n)}{\lambda \sqrt{n} \sqrt{N}} 
\\ 
& + 
\left(
\frac{\cte (c_n + Y_{\max,n}) }{N} 
+
\frac{\cte (Y_{\max,n} + c_n)}{\sqrt{n} \sqrt{N}} 
\right)^{-1}
 \\ 
 &
 \left(
 \frac{\cte  (Y_{\max,n} + c_n)^2}{\lambda n N}
+
\frac{\cte  (c_n + Y_{\max,n})^2}{\lambda^2 n N^{3/2}}
\right).
\end{align*}

Re-arranging we obtain
\begin{align*}
\sqrt{    \bE [ T_{n,N}^2]}
   \leq &
\frac{\cte (c_n + Y_{\max,n}) }{ \lambda N} 
+
\frac{\cte (Y_{\max,n} + c_n)}{\lambda \sqrt{n} \sqrt{N}} 
\\ 
& + 
\left(
1
+
\frac{\sqrt{N} }{\sqrt{n} } 
\right)^{-1}
\left(
 \frac{\cte  (Y_{\max,n} + c_n)}{\lambda n }
+
\frac{\cte  (c_n + Y_{\max,n})}{\lambda^2 n \sqrt{N}}
\right).
\end{align*}

This completes the proof.

\subsection{Lemmas}

The following two lemmas are elementary.

\begin{lemma} \label{lemma:holder:square:exponential}
Recall the definition $F(t) = e^{-t^2}$.
 There is an absolute constant $A_F$ such that for $t \geq 0$,
 \[
 1 - F(t)
\leq 
A_F t^2.
 \]
\end{lemma}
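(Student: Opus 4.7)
The plan is to reduce the claim to the elementary scalar inequality $1 - e^{-x} \leq x$ for all $x \geq 0$, and then apply it with $x = t^2$. This gives $A_F = 1$ as an admissible constant, without any case distinction on $t$.

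To justify $1 - e^{-x} \leq x$ for $x \geq 0$, I would simply write
\[
1 - e^{-x} \;=\; \int_0^x e^{-s}\, ds \;\leq\; \int_0^x 1 \, ds \;=\; x,
\]
since $e^{-s} \leq 1$ on $[0,x]$. Alternatively, one can invoke the convexity of $e^{-x}$ and the tangent-line inequality $e^{-x} \geq 1 - x$ at $x = 0$. Either argument is a one-liner.

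Substituting $x = t^2 \geq 0$ then yields
\[
1 - F(t) \;=\; 1 - e^{-t^2} \;\leq\; t^2,
\]
so the conclusion holds with the absolute constant $A_F = 1$. There is no real obstacle here; the only thing to note is that the bound is only useful for small $t$, but since $1 - F(t) \leq 1$ is automatic and the statement is valid for all $t \geq 0$, no truncation or secondary argument is needed.
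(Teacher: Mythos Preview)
Your proof is correct and gives the sharp constant $A_F = 1$. The paper itself does not write out a proof, merely declaring the lemma ``elementary''; your argument via $1 - e^{-x} = \int_0^x e^{-s}\,ds \leq x$ is exactly the kind of one-line justification one would supply here.
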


\begin{lemma} \label{lemma:sum:and:power}
For $u,v,w \geq 0$, we have $(u + v)^w \leq 2^w (u^w + v^w)$.
\end{lemma}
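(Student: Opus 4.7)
The inequality $(u+v)^w \le 2^w(u^w+v^w)$ is a two-line fact, and I would prove it by a single chain of monotone estimates valid for all $u,v,w\ge 0$, without splitting into cases on $w$.

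\textbf{Step 1: reduce to the maximum.} Since $u,v\ge 0$, we have $u+v \le 2\max(u,v)$. Because the map $t\mapsto t^w$ is nondecreasing on $[0,\infty)$ for every $w\ge 0$, raising both sides to the $w$-th power preserves the inequality:
\[
(u+v)^w \;\le\; \bigl(2\max(u,v)\bigr)^w \;=\; 2^w \max(u,v)^w.
\]

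\textbf{Step 2: bound the maximum by the sum.} Again using $u,v\ge 0$ and monotonicity of $t\mapsto t^w$, we have $\max(u,v)^w = \max(u^w, v^w) \le u^w + v^w$, since both $u^w$ and $v^w$ are nonnegative. Combining the two displays gives the claim
\[
(u+v)^w \;\le\; 2^w\bigl(u^w + v^w\bigr).
\]

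\textbf{Obstacle.} There is essentially no obstacle: the only subtlety is ensuring that the power map is monotone, which holds for any $w\ge 0$ on the nonnegative reals, so no case split (on $w\le 1$ vs.\ $w\ge 1$) is needed. The constant $2^w$ in the statement is precisely what the crude bound $u+v \le 2\max(u,v)$ produces, so the estimate is tight for this approach.
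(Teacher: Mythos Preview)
Your proof is correct. The paper treats this lemma as elementary and omits its proof entirely, so there is no argument to compare against; your two-step chain via $u+v \le 2\max(u,v)$ and $\max(u,v)^w \le u^w+v^w$ is a clean and standard way to obtain the bound.
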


The next lemma may be known by the experts, but we nevertheless provide a proof for self-sufficiency.

\begin{lemma} \label{lemma:kernel:change:input}

Let $f \in \HH_K$.
Let $k \in \mathbb{N}$ and $u,v_1,\ldots,v_k \in \HH$. Let $f_k : \mathbb{R}^k \to \mathbb{R}$ be defined by
\[
f_k(t_1 , \ldots , t_k) = 
f(u + \sum_{i=1}^k t_i v_i). 
\]
Let $K_k$ be the kernel on $\mathbb{R}^k$ defined by 
\[
K_k( t_1 , \ldots , t_k , t'_1 , \ldots , t'_k ) 
=
K( u + \sum_{i=1}^k t_i v_i , u + \sum_{i=1}^k t'_i v_i ).
\]
Let $\HH_{K_k}$ be the RKHS of $K_k$. Then $f_k \in \HH_{K_k}$ and $\|f_k\|_{\HH_{K_k}} \leq \|f\|_{\HK}$.   
\end{lemma}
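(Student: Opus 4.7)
The plan is to view $f_k$ as the pullback of $f$ under the affine map $\phi : \mathbb{R}^k \to \HH$ defined by $\phi(t_1,\dots,t_k) = u + \sum_{i=1}^k t_i v_i$, so that $f_k = f\circ\phi$ and $K_k(s,s') = K(\phi(s),\phi(s'))$. The statement then becomes the standard fact that the pullback operator $T : \HH_K \to \HH_{K_k}$, $f \mapsto f\circ\phi$, is well-defined and norm-contracting. I would establish this by the usual three-step recipe: isometry on finite combinations of kernel sections, orthogonal decomposition to eliminate the part of $f$ that is invisible to $\phi$, and a Cauchy-sequence argument.

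First I would check the isometry on a dense subspace. For any finite combination $g = \sum_{i=1}^m \alpha_i K_{\phi(s^{(i)})} \in \HK$ with $s^{(i)} \in \mathbb{R}^k$, the reproducing property gives
\begin{equation*}
(g\circ\phi)(t) = \sum_{i=1}^m \alpha_i K(\phi(s^{(i)}),\phi(t)) = \sum_{i=1}^m \alpha_i K_k(s^{(i)},t),
\end{equation*}
so $g\circ\phi \in \HH_{K_k}$ and $\|g\circ\phi\|_{\HH_{K_k}}^2 = \sum_{i,j}\alpha_i\alpha_j K_k(s^{(i)},s^{(j)}) = \sum_{i,j}\alpha_i\alpha_j K(\phi(s^{(i)}),\phi(s^{(j)})) = \|g\|_{\HK}^2$.

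Next I would orthogonally decompose $f = f^{(1)} + f^{(2)}$ in $\HK$, where $f^{(1)}$ lies in the closed subspace $V := \overline{\mathrm{span}}\{K_{\phi(s)} : s \in \mathbb{R}^k\}$. For $s \in \mathbb{R}^k$, the reproducing property yields $f^{(2)}(\phi(s)) = \langle f^{(2)}, K_{\phi(s)}\rangle_{\HK} = 0$, hence $f_k = f\circ\phi = f^{(1)}\circ\phi$ and $\|f^{(1)}\|_{\HK} \le \|f\|_{\HK}$.

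Finally I would pass to the limit. Pick $(g_n)$ of the form above converging to $f^{(1)}$ in $\HK$; by the isometry, $(g_n\circ\phi)$ is Cauchy in $\HH_{K_k}$, hence converges to some $h \in \HH_{K_k}$. Evaluation is continuous on any RKHS, so for each $t \in \mathbb{R}^k$, $h(t) = \lim_n g_n(\phi(t)) = f^{(1)}(\phi(t))$, i.e.\ $h = f^{(1)}\circ\phi = f_k$. The norm identity in the isometric step then passes to the limit:
\begin{equation*}
\|f_k\|_{\HH_{K_k}} = \lim_n \|g_n\circ\phi\|_{\HH_{K_k}} = \lim_n \|g_n\|_{\HK} = \|f^{(1)}\|_{\HK} \le \|f\|_{\HK}.
\end{equation*}
The only mildly delicate point is the limit argument, but it is routine since the isometry on the dense subspace converts norm convergence in $\HK$ to norm convergence in $\HH_{K_k}$, and pointwise identification of the limit is immediate from the reproducing property.
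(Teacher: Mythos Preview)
Your proof is correct and follows essentially the same route as the paper. The paper first restricts $f$ to the affine image $\{u+\sum t_i v_i\}$ and invokes the restriction theorem \cite[Thm.~6]{berlinet2011reproducing} to obtain $\|\bar f\|_{\HH_{\bar K}}\le\|f\|_{\HK}$, then identifies $\HH_{\bar K}$ with $\HH_{K_k}$ via a Cauchy-sequence/pointwise-limit argument citing \cite[Thm.~3]{berlinet2011reproducing}; your orthogonal decomposition $f=f^{(1)}+f^{(2)}$ onto $V=\overline{\mathrm{span}}\{K_{\phi(s)}\}$ is precisely what underlies that restriction theorem, and your limit step is the same as theirs, so the two arguments differ only in that you unpack the cited result inline.
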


\begin{proof}
    Let $\bar{f}$ be the restriction of $f$ to 
    \[
    \{  u + \sum_{i=1}^k t_i v_i , t_1 , \ldots , t_k \in \mathbb{R}  \}
    \]
    and $\bar{K}$ be the restriction of $K$ to the same space. Let $\HH_{\bar{K}}$ be the RKHS of $\bar{K}$.
    Then \cite[Th. 6]{berlinet2011reproducing}, $\bar{f}$ belongs to  $\HH_{\bar{K}}$ and $\|\bar{f} \|_{\HH_{\bar{K}}} \leq \|f \|_{\HH_K}$. From \citet[Th. 3]{berlinet2011reproducing}, $\bar{f}$ is the pointwise limit of a Cauchy sequence $(\bar{f}_n)_{n \in \mathbb{N}}$ in $\HH_{\bar{K}}$ of the form 
    \[
    \bar{f}_n( \cdot ) = 
    \sum_{i=1}^n \alpha^n_i K \left(  u + \sum_{j=1}^k t^n_{i,j} v_j  , \cdot \right). 
    \]
    Hence $f_k(t_1 , \ldots , t_k)$ is the limit (pointwise) of 
    \[
        \sum_{i=1}^n \alpha^n_i K \left(  u + \sum_{j=1}^k t^n_{i,j} v_j  , u + \sum_{j=1}^k t_{j} v_j  \right). 
    \]
    Hence again from \citet[Thm. 3]{berlinet2011reproducing}, $f_k \in \HH_{K_k}$ and 
    \begin{align*}
        \|f_k\|_{\HH_{K_k}}^2
        =
        \lim_{n \to \infty} 
   \sum_{i,i'=1}^n \alpha^n_i \alpha^n_{i'} K \left(  u + \sum_{j=1}^k t^n_{i,j} v_j  , u + \sum_{j'=1}^k t^n_{i',j'} v_{j'} \right) 
   =
    \|\bar{f}\|_{\HH_{\bar{K}}}^2.
    \end{align*}
    This concludes the proof.
\end{proof}

The next lemma enables to linearize (with quantitative control) the functions in $\HH_K$.

\begin{lemma} \label{lemma:hadamard:diff:Hilbert}
There exists an absolute constant $c_2$ such that the following holds. 
Let $f \in \HH_K$. Then for each $x \in \HH$ there exists a unique linear continuous function $\psi_x : \HH \to \mathbb{R}$ such that  for $y \in \HH$,
\begin{equation} \label{eq:Hadamard:deux}
\left| 
f( x + y )
-
f(x)
-
\psi_x(y)
\right| 
\leq 
c_2 \|f \|_{\HH_K}
\|y\|_{\HH}^2.
\end{equation}
Furthermore 
\begin{equation} \label{eq:Hadamard:un}
\sup_{x \in \HH}
\sup_{\substack{y \in \HH \\ \|y\|_{\HH} = 1 } }
| \psi_x (y) |
\leq 
c_2 \|f \|_{\HH_K}.
\end{equation}

\end{lemma}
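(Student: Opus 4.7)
The plan is to reduce the statement to a one-dimensional Taylor expansion in the Gaussian RKHS on $\R$, using Lemma \ref{lemma:kernel:change:input}.

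First I would fix $x \in \HH$ and $y \in \HH$ with $y \neq 0$ (the case $y = 0$ being trivial), and set $\bar{f}(t) := f(x + ty)$. By Lemma \ref{lemma:kernel:change:input} applied with $k=1$, $u=x$, $v_1=y$, $\bar{f}$ lies in the RKHS $\HH_{\bar K}$ of $\bar{K}(t,t') = e^{-(t-t')^2 \|y\|^2_{\HH}}$ on $\R$, with $\|\bar{f}\|_{\HH_{\bar K}} \le \|f\|_{\HH_K}$. Next I would establish that the partial derivatives $\bar{K}^{(j)}_t(s) := \partial_t^j \bar{K}(t,s)$ for $j=1,2$ belong to $\HH_{\bar K}$ with
\begin{equation*}
\|\bar{K}^{(1)}_t\|^2_{\HH_{\bar K}} = \partial_t \partial_{t'} \bar{K}(t,t')\big|_{t=t'} = 2\|y\|^2_{\HH}, \qquad \|\bar{K}^{(2)}_t\|^2_{\HH_{\bar K}} = \partial_t^2 \partial_{t'}^2 \bar{K}(t,t')\big|_{t=t'} = 12 \|y\|^4_{\HH},
\end{equation*}
where the second identity uses $\partial_u^4 e^{-u^2}\big|_{u=0} = 12$ after a rescaling $u = (t-t')\|y\|_\HH$. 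The reproducing property then gives $\bar{f}^{(j)}(t) = \langle \bar{f}, \bar{K}^{(j)}_t \rangle_{\HH_{\bar K}}$, so by Cauchy-Schwarz
\begin{equation*}
|\bar{f}'(t)| \le \sqrt{2}\,\|y\|_{\HH}\, \|f\|_{\HH_K}, \qquad |\bar{f}''(t)| \le \sqrt{12}\,\|y\|^2_{\HH}\, \|f\|_{\HH_K}.
\end{equation*}
Setting $\psi_x(y) := \bar{f}'(0)$, Taylor's theorem applied to $\bar{f}$ between $0$ and $1$ then yields $|f(x+y) - f(x) - \psi_x(y)| \le \tfrac{1}{2} \sup_{t \in [0,1]} |\bar{f}''(t)| \le \tfrac{\sqrt{12}}{2}\|f\|_{\HH_K}\|y\|^2_{\HH}$, which is \eqref{eq:Hadamard:deux}, and the first-derivative bound restricted to $\|y\|_{\HH} = 1$ yields \eqref{eq:Hadamard:un}.

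The remaining point is linearity of $y \mapsto \psi_x(y)$. For this I would apply Lemma \ref{lemma:kernel:change:input} in dimension $k=2$: given $y_1, y_2 \in \HH$, the function $f_2(t_1,t_2) := f(x + t_1 y_1 + t_2 y_2)$ lies in a two-dimensional Gaussian-type RKHS, hence is $C^2$ on $\R^2$. The chain rule then gives $\psi_x(y_1 + y_2) = \tfrac{d}{dt}\big|_{t=0} f_2(t,t) = \partial_{t_1} f_2(0,0) + \partial_{t_2} f_2(0,0) = \psi_x(y_1) + \psi_x(y_2)$, and analogously $\psi_x(\alpha y) = \alpha \psi_x(y)$ for $\alpha \in \R$. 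Uniqueness of $\psi_x$ follows from \eqref{eq:Hadamard:deux} by the standard argument of substituting $\epsilon y$ and letting $\epsilon \to 0$.

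The main technical obstacle I expect is the rigorous justification of the RKHS differential calculus above, namely that $\bar{K}^{(j)}_t \in \HH_{\bar K}$ and that pointwise derivatives of $\bar{f}$ coincide with $\langle \bar{f}, \bar{K}^{(j)}_t\rangle_{\HH_{\bar K}}$. I would handle this via a difference-quotient/density argument: the computation $\|(\bar{K}_{t+h} - \bar{K}_t)/h\|^2_{\HH_{\bar K}} = 2(1 - e^{-h^2 \|y\|^2_\HH})/h^2 \to 2\|y\|^2_\HH$, together with a similar Cauchy estimate as $h, h' \to 0$, shows that the difference quotients converge in $\HH_{\bar K}$ to a limit which must be $\bar{K}^{(1)}_t$ by the reproducing property (and analogously for $\bar{K}^{(2)}_t$). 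For finite combinations $g_n = \sum_i \alpha_i \bar{K}_{t_i}$ pointwise derivatives coincide with $\langle g_n, \bar{K}^{(j)}_t \rangle_{\HH_{\bar K}}$ by direct differentiation under the sum, and the uniform-on-compacts bound $|g'_n(t) - g'_m(t)| \le \sqrt{2}\|y\|_\HH \|g_n - g_m\|_{\HH_{\bar K}}$ then extends the identity to general $\bar{f}$ by density.
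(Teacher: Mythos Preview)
Your proposal is correct and follows essentially the same approach as the paper: reduce to the one-dimensional Gaussian RKHS via Lemma~\ref{lemma:kernel:change:input}, bound the first two derivatives there, apply Taylor's theorem, and establish linearity via the two-dimensional case. The only cosmetic differences are that the paper first normalizes to $\|y\|_{\HH}=1$ (so the 1D kernel becomes exactly $e^{-(t-t')^2}$) and then cites \cite{van2009adaptive} for the derivative bounds rather than deriving them explicitly from the derivative reproducing kernels, and the paper's linearity argument passes through an orthonormal pair $u,v$ before applying the 2D Taylor expansion, whereas your direct chain-rule computation is a slight streamlining of the same idea.
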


\begin{proof}
    Let $x , y \in \HH$ with $ \|y\|_\HH = 1 $. Consider the function
    \[
    t \in \mathbb{R} 
    \mapsto 
    f(x + t y).
    \]
    From Lemma \ref{lemma:kernel:change:input}, this function is in the RKHS of the kernel $(t , t') \mapsto e^{-|t-t'|^2}$, with RKHS norm no larger than $\|f\|_{\HH_K}$. Hence, see for instance \citet[Lem. 4.1]{van2009adaptive}, this function is twice continuously differentiable with first and second derivative bounded in absolute value by $c_2 \|f\|_{\HH_K}$, when choosing $c_2$ large enough. Applying a Taylor expansion (on the real line), we obtain, for $t_0 \in \mathbb{R}$, 
    \[
\left| 
f(x + t_0 y) 
-
f(x)
- 
\left(
\frac{\partial}{\partial t}
f(x + ty) 
\right)_{t = 0}
t_0
\right|
\leq 
\frac{t_0^2}{2} 
c_2 \|f\|_{\HH_K}.
    \]
Hence, defining 
\[
\psi_x(t_0 y)
=
\left(
\frac{\partial}{\partial t}
f(x + ty) 
\right)_{t = 0}
t_0,
\]
we obtain that \eqref{eq:Hadamard:deux} holds.
Equation \eqref{eq:Hadamard:un} also holds from  the above comment on the first derivative (on the real line). It thus remains to show that $\psi_x$ is linear. By definition $\psi_x$ is homogeneous of degree one. 

Let $z_1 , z_2 \in \HH$ and $t_1,t_2 \in \mathbb{R}$.  
As seen before we have
\[
f( x + t z_1 + t z_2 )
=
f(x) + 
t
\psi_x ( z_1 + z_2 )
+
r,
\]
with $|r| \leq c_2 \|f\|_{\HH_K} \|t z_1 + t {z_2}\|_{\HH}^2/2 =  c_2 \|f\|_{\HH_K}  \|z_1+z_2\|_\HH^2 t^2/2$. 

Let $u, v \in \HH$ such that  $\|u\|_\HH=1$, $\|v\|_\HH = 1$ and $\langle u,v \rangle_\HH = 0$ and let $a_1,b_1,a_2,b_2 \in \mathbb{R}$ such that $z_1 = a_1u + b_1 v$ and $z_2 = a_2u + b_2 v$. 

The function 
\[
(t_1,t_2) \in \mathbb{R}^2 
\mapsto
f( x + t_1 z_1 + t_2 z_2 ) 
\]
is obtained by linear change of inputs from the function
\[
(s_1,s_2) \in \mathbb{R} 
\mapsto 
f( x + s_1 u + s_2 v ) 
\]
that is in the RKHS of the kernel $ (s_1,s_2,s'_1,s'_2) \mapsto e^{-{(s_1-s'_1)^2 - (s_2 - s'_2)^2} } $ and thus is twice differentiable.

Applying a (two-dimensional) Taylor expansion to this function, we obtain  
\begin{align*}
 f( x + t z_1 + t z_2 )  
= &
f(x)
+
\left(
\frac{\partial}{ \partial t_1 } 
f(x + t_1 z_1) 
\right)_{t_1 = 0}
t
+
\left(
\frac{\partial}{ \partial t_2 } 
f(x + t_2 z_2) 
\right)_{t_2 = 0}
t
+
r'
\\
& =
f(x)
+
t \psi_x(z_1 )
+
t
\psi_x(z_2)
+r',
\end{align*}
where $r' = \cO(  t^2 )$ (for fixed $x , z_1 , z_2$). 

Hence by unicity of the first order expansion, we have
$ \psi_x(z_1  + z_2) = \psi_x (z_1) + \psi_x(z_2) $. Hence $\psi_x$ is linear.
To conclude the proof, it can be shown simply that if, for a fixed $x$, two linear continuous functions $\psi_x$ and $\psi'_x$ satisfy \eqref{eq:Hadamard:deux} for all $y \in \HH$, then they coincinde.
\end{proof}

\begin{lemma} \label{lemma:convex:gradient}
   Let $\ell_1 , \ldots , \ell_n$, $\tell_1 , \ldots , \tell_n$ be linear functions on $\HK$, recall $Y_1 , \ldots , Y_n \in \mathbb{R}$ and let $ \lambda >0 $. Let 
    \[
f = \Argmin{h \in \HK} 
\frac{1}{n} 
\sum_{i=1}^n \left( \ell_i(h) - Y_i \right)^2
+ \lambda \|h\|_{\HK}^2
    \]
    and 
    \[
g = \Argmin{h \in \HK} 
\frac{1}{n} 
\sum_{i=1}^n \left( \tell_i(h) - Y_i \right)^2
+ \lambda \|h\|_{\HK}^2.
    \]
Then 
\begin{align*}
    \|f -g \|_{\HK}^2 
    & \leq 
    \frac{1}{\lambda} 
    \left[ 
    \frac{1}{n} 
    \sum_{i=1}^n 
    \left\{ 
\tell_i(f-g) \tell_i(f) 
-
\ell_i(f-g) \ell_i(f) 
    \right\} 
    +
        \frac{1}{n} 
    \sum_{i=1}^n 
    Y_i
    \left\{ 
\ell_i(f-g) 
-
\tell_i(f-g) 
    \right\}
    \right].
    \end{align*}
\end{lemma}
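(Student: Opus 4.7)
My plan is to exploit the first-order optimality conditions for $f$ and $g$, then use the nonnegativity of a squared term to convert an expression involving $\tilde{\ell}_i(g)$ into one involving $\tilde{\ell}_i(f)$, which is what the statement requires.

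First, since $f$ minimizes the convex (indeed strongly convex) functional $R(h) = \frac{1}{n}\sum_{i=1}^n (\ell_i(h) - Y_i)^2 + \lambda \|h\|_{\HK}^2$ over $\HK$, the Fr\'echet gradient of $R$ vanishes at $f$, yielding that for every $h \in \HK$,
\[
\frac{1}{n}\sum_{i=1}^n (\ell_i(f) - Y_i)\ell_i(h) + \lambda \langle f, h \rangle_{\HK} = 0.
\]
The analogous identity holds for $g$ with the $\tilde{\ell}_i$'s. I would apply both identities with the test direction $h = f - g$, then subtract the second from the first, which produces
\[
\lambda \|f-g\|_{\HK}^2 = \frac{1}{n}\sum_{i=1}^n (\tilde{\ell}_i(g) - Y_i)\tilde{\ell}_i(f-g) - \frac{1}{n}\sum_{i=1}^n (\ell_i(f) - Y_i)\ell_i(f-g),
\]
because $\lambda \langle f - g, f - g \rangle_{\HK}$ emerges with a plus sign on the left.

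Next, I would distribute the $Y_i$ terms to separate them from the $\ell_i$/$\tilde{\ell}_i$ products, which immediately reproduces the second sum $\frac{1}{n}\sum_i Y_i\{\ell_i(f-g) - \tilde{\ell}_i(f-g)\}$ of the claimed bound. The remaining term contains $\tilde{\ell}_i(g)\tilde{\ell}_i(f-g)$ rather than the desired $\tilde{\ell}_i(f)\tilde{\ell}_i(f-g)$. To close this gap, I would write $\tilde{\ell}_i(f) = \tilde{\ell}_i(g) + \tilde{\ell}_i(f-g)$, by linearity of $\tilde{\ell}_i$, so that
\[
\tilde{\ell}_i(f)\tilde{\ell}_i(f-g) - \tilde{\ell}_i(g)\tilde{\ell}_i(f-g) = \tilde{\ell}_i(f-g)^2 \geq 0.
\]
Averaging over $i$ and adding this nonnegative quantity to the right-hand side gives the inequality of the lemma after dividing by $\lambda$.

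There is no real obstacle here: the argument is entirely algebraic once the first-order conditions are in place, and the only ``trick'' is the one-line replacement $\tilde{\ell}_i(g) \leadsto \tilde{\ell}_i(f)$ via the nonnegative square. What I want to double-check carefully is just the sign bookkeeping in the subtraction step and the fact that the Fr\'echet derivative of $h \mapsto (\ell_i(h) - Y_i)^2$ in direction $h'$ is $2(\ell_i(h) - Y_i)\ell_i(h')$, which relies only on linearity and continuity of the $\ell_i$ on $\HK$ (not on any specific representation such as $\ell_i(h) = h(x_i)$).
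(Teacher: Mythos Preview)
Your proof is correct and is essentially the same as the paper's, just phrased differently: the paper parameterizes along the segment $t \mapsto g + t(f-g)$, uses $\tilde R'(0)=0$, $R'(1)=0$, and strong convexity to obtain $\tilde R'(1)\ge 2\lambda\|f-g\|_{\HK}^2$, then computes $\tilde R'(1)-R'(1)$. The nonnegative term the paper drops via strong convexity is precisely $\frac{2}{n}\sum_i \tilde\ell_i(f-g)^2$, which is exactly the nonnegative square you add in your last step, so the two arguments are algebraically identical.
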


\begin{proof}
    Let, for $t \geq 0$,
    \begin{align*}
    \tilde{R}(t)  = &
    \frac{1}{n} \sum_{i=1}^n 
    \left\{ 
\tell_i( g + t(f-g) ) - Y_i 
    \right\}^2 
    + \lambda \| g + t(f-g) \|^2_{\HK}  
    \\
     = &
    t^2 \left[ 
     \frac{1}{n} \sum_{i=1}^n 
     \tell_i^2(f-g) 
     + \lambda \|f -g\|_{\HK}^2
    \right]
    +
     t \left[ 
     \frac{1}{n} \sum_{i=1}^n 
     2 \tell_i(f-g) 
     \left\{ 
\tell_i(g) - Y_i
     \right\}   
     + 2  \lambda \langle g , f -g \rangle_{\HK}
    \right]
    \\
&    + 
     \frac{1}{n} \sum_{i=1}^n  
     \left( 
\tell_i(g) - Y_i
     \right)^2 
     + \lambda \|g\|_{\HK}^2.
    \end{align*}
Then by strong convexity, $\tilde{R}'(0) = 0$ and
\[
\tilde{R}'(1) 
\geq 
2 \lambda \|f - g\|_{\HK}^2.
\]

Let similarly 
    \[
 R(t)  = 
    \frac{1}{n} \sum_{i=1}^n 
    \left\{ 
\ell_i( g + t(f-g) ) - Y_i 
    \right\}^2 
    + \lambda \| g + t(f-g) \|^2_{\HK}.  
    \] 

Then $R'(1) = 0$. Hence 
\begin{align*}
2 \lambda \|f - g\|_{\HK}^2 
\leq &
\left( 
\tilde{R}'(1)  - R'(1)
\right) 
\\
= &
\frac{2}{n} 
\sum_{i=1}^n  
\left\{ 
\tell_i^2 (f - g) 
- 
\ell_i^2 (f - g) 
\right\} 
+
\frac{2}{n} 
\sum_{i=1}^n  
\left\{ 
\tell_i (f - g) 
\tell_i (g) 
- 
\ell_i (f - g) 
\ell_i (g) 
\right\} 
\\
& + 
\frac{2}{n} 
\sum_{i=1}^n  
\left\{ 
\ell_i (f - g) 
Y_i
- 
\tell_i (f - g) 
Y_i
\right\} 
\\
= &
 \frac{2}{n} 
    \sum_{i=1}^n 
    \left\{ 
\tell_i(f-g) \tell_i(f) 
-
\ell_i(f-g) \ell_i(f) 
    \right\} 
    +
        \frac{2}{n} 
    \sum_{i=1}^n 
    Y_i
    \left\{ 
\ell_i(f-g) 
-
\tell_i(f-g) 
    \right\}.
\end{align*}
This concludes the proof.
\end{proof}

\section{
Proofs for Section \ref{subsection:sharper:rates:different:norms}
}
\label{appendix:sharper:rates:different:norms}

\subsection{Preliminary lemma}

The proof of Theorem \ref{bound:Cinfty} relies on Lemma \ref{conditioCOuntinousTCL} below. 
For a (linear) operator $A$ on $\HEK$, its operator norm is written $\|A\|_{OP(\HEK,\HEK)}$ and defined as
$\|A\|_{OP(\HEK,\HEK)}=\sup_{\|f\|_{\HEK}\leq 1}\| A f\|_{\HEK} $. 
We say that an operator $A$ is bounded if $\|A\|_{OP(\HEK,\HEK)} < \infty$. 
We say that a sequence of bounded operators $(A_n)_{n \in \N}$ converges to an operator $A$ in operator norm if $\|A_n - A\|_{OP(\HEK,\HEK)}$ goes to zero as $n\to \infty$. 
Finally, for any $u \in \cE $, we let $K_{\cE,u} \in \HEK$ be defined by $K_{\cE,u}(v) = K(u,v)$ for $v \in \cE$.

\begin{lemma}\label{conditioCOuntinousTCL}
    The sequence of operators $\Theta_{n}:\HEK\to \HEK$, defined as
\[
 \Theta_{n}(\phi) = 
    \frac{1}{n}
    \sum_{i=1}^n
    2 \phi(x_i) 
    K_{\cE,x_i}
\]    
converges almost surely as $n \to \infty$ in operator norm 
 to a bounded injective operator $\Theta$. 
\end{lemma}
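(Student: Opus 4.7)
The natural candidate for the limit is the (scaled) covariance operator
\[
\Theta(\phi) = 2 \int_{\cE} \phi(x)\, K_{\cE,x}\, \dd \cL(x),
\]
defined as a Bochner integral in $\HEK$. Since $K(u,u)=F(0)=1$ for all $u\in\cE$ by \eqref{eq:squared:exponential}, we have $\|K_{\cE,x}\|_{\HEK}=1$ uniformly, so by the reproducing property $|\phi(x)|\le \|\phi\|_{\HEK}$. Thus $\|\Theta\|_{OP(\HEK,\HEK)}\le 2$, and $\Theta$ is bounded.

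The first main step is to view each summand as a random rank-one operator $V_i:\HEK\to\HEK$ defined by $V_i(\phi)= 2\phi(x_i)K_{\cE,x_i}$, and to apply a strong law of large numbers in the Hilbert space of Hilbert-Schmidt operators on $\HEK$. Since $V_i$ has rank one with Hilbert-Schmidt norm
\[
\|V_i\|_{HS}^2 \;=\; 4\,\|K_{\cE,x_i}\|_{\HEK}^4 \;=\; 4\,K(x_i,x_i)^2 \;=\; 4,
\]
the operators $V_i$ are i.i.d. and uniformly bounded in $HS$-norm (hence in particular have finite first moment). Mourier's strong law of large numbers for i.i.d. random elements in a separable Hilbert space then yields that $\Theta_n=(1/n)\sum_{i=1}^n V_i$ converges almost surely in $HS$-norm to $\bE[V_1]=\Theta$. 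Since the operator norm is dominated by the $HS$-norm, this gives the almost sure operator norm convergence of $\Theta_n$ to $\Theta$. The (mild) subtlety to check here is that $\bE[V_1]=\Theta$ in the Bochner sense; this follows by pairing with arbitrary $\phi,\psi\in\HEK$ and applying Fubini, using $\langle V_1\phi,\psi\rangle_{\HEK} = 2\phi(x_1)\psi(x_1)$.

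For injectivity, suppose $\Theta(\phi)=0$ for some $\phi\in\HEK$. Taking the inner product with $\phi$ and using the reproducing property,
\[
0 = \langle \Theta(\phi),\phi\rangle_{\HEK} = 2\int_{\cE} \phi(x)^2\, \dd\cL(x),
\]
so $\phi$ vanishes $\cL$-almost everywhere on $\cE$. The main subtlety, and arguably the main obstacle, is that $\phi$ belongs to $\HEK$ and is therefore only a priori defined on $\cE$; one must upgrade this a.e.~vanishing to pointwise vanishing on all of $\cE$. This is where Condition~\ref{cond:support} and the choice of $\cE$ as the support of $\cL$ enter: $\phi$ is continuous on $\cE$ by \citet[Thm.~17]{berlinet2011reproducing} and by the definition of the support every point of $\cE$ lies in every open neighbourhood of positive $\cL$-measure, so $\phi\equiv 0$ on $\cE$. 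Since functions in $\HEK$ are determined by their values on $\cE$, this gives $\phi=0$ in $\HEK$, proving injectivity and completing the proof.
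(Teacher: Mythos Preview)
Your proof is correct and follows essentially the same approach as the paper: view each summand as a random rank-one operator, apply a strong law of large numbers in a suitable operator ideal, and deduce operator-norm convergence, then prove injectivity via $\langle\Theta\phi,\phi\rangle_{\HEK}=2\int\phi^2\,\dd\cL$ together with continuity of $\phi$ and the definition of $\cE$ as the support of $\cL$. The only minor difference is that you work in the Hilbert space of Hilbert--Schmidt operators and invoke Mourier's SLLN, whereas the paper works in the Banach space of trace-class operators and invokes a Banach-space SLLN (Ledoux--Talagrand); since the summands are rank one, both ideals are equally natural, and your Hilbert-space route is arguably slightly more elementary.
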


\begin{proof}
For $x \in \cE$, the operator $\Theta _x: \HEK \to \HEK $ defined by $\Theta_x \phi =  \phi(x) K_{
\cE,x}$ is easily seen to be self-adjoint and non-negative. It is also trace class: for an orthonormal basis $(e_k)_{k \in \N}$ of $\HEK$ we have
\begin{align*}
    \sum_{k=1}^{\infty}
    \langle \Theta _x e_k , e_k \rangle_{\HEK} 
    = & 
    \sum_{k=1}^{\infty} e_k(x) \langle e_k ,K_{
\cE,x}\rangle_{\HEK} \\
= & \sum_{k=1}^{\infty} 
\left( 
\langle e_k ,K_{
\cE,x}
\rangle_{\HEK}
\right)^2 
\\ 
= & \| K_{
\cE,x} \|^2_{\HEK}
\\ 
= & 1,
\end{align*}
from Parseval's identity. 

The linear space of trace class operators on the separable Hilbert space $\HEK$ is a Banach space with the trace norm
\citep[Cor. 4.2.2]{murphy2014c}.
Hence applying the strong law of large numbers on Banach spaces, see for instance \citet[Cor. 7.10]{ledoux1991probability}, we obtain that $\Theta_n$ converges almost surely in trace norm, and thus in operator norm \citep[Cor. 3.4.4]{pedersen2012analysis}, to the operator $\Theta$ defined by 
\begin{equation} \label{eq:Theta}
\Theta \phi
=
\int_{\cE} 
2
\phi(x)
K_{\cE,x} 
\dd \cL(x).
\end{equation}
This operator is injective because if $\Theta \phi = 0 \in \HEK $ then
\begin{align*}
    0 = &
    \langle 
    \Theta \phi , \phi 
    \rangle_{\HEK}
    \\ 
    = &
2    \int_{\cE} 
\phi(x) 
  \langle 
K_{\cE,x} 
,
\phi  
\rangle_{\HEK}
\dd \cL(x) 
\\ 
= &
 2   \int_{\cE} 
\phi(x)^2 
\dd \cL(x) 
\end{align*}
and thus $\phi$ is $\cL$-almost surely zero on $\cE$. Since $\cE$ is the probabilistic support of $\cL$ and since $\phi$ is continuous on $\cE$, then $\phi$ is identically zero on $\cE$. This concludes the proof. 
\end{proof}

\subsection{Proofs of Theorem \ref{bound:Cinfty}}

We consider the Banach space $\cC(\cE)$ of the continuous functions,
from the compact space $\cE$ to $\R$, endowed with the norm $\| \cdot \|_{\cE,\infty}$.
We say that a sequence $(X_{n,N})$ of random elements of $\cC(\cE)$ is tight if for any $\epsilon>0$, there exists  a compact set $A$ such that $\mathbb{P}( X_{n,N} \in A) \geq 1-\epsilon$, for all $n,N$.

Then, Prohorov's theorem \citep[Thm. 1.3.9]{vandervaart2013weak} states that any tight sequence of probability measures is relatively compact for the weak convergence, 
that is, every subsequence  has a further subsequence that converges to a
tight probability measure.
In our space, $\cC(\cE)$, the following condition implies tightness: for any $\tau,\mu>0$, there exists $\delta>0$ such that 
\begin{equation}
    \label{condition}
    \limsup_{
    \substack{
    n \to \infty 
    \\ 
    N \to \infty}
  }  
    \mathbb{P}\left(\sup_{
    \substack{
    x,x' \in \cE \\
    \|x-x'\|_{\HH}<\delta}
    }
    |X_{n,N}(x)-X_{n,N}(x')|>\mu \right)<\tau.
\end{equation}
This claim is direct consequence of \cite[Theorem 1.5.6]{vandervaart2013weak}\footnote{This reference states this condition  for the (strictly bigger) space of real-valued bounded functions on $\cE$, denoted by $\ell^{\infty}(\cE)$, and in terms of a finite partition of $\cE$.  Since the open sets for the form $O_x = 
\{x'\in \cE: \ \|x-x'\|_{\HH}<\delta\}$, $x \in \cE$,
conform a $\delta$-covering of the whole compact set $\cE$, there exists a finite $\delta$-sub-covering. As a consequence, \eqref{condition} implies tightness in  $\ell^{\infty}(\cE)$, so in $\cC(\cE)$.}.
The boundeness in probability of the norm 
$\|a_{n,N} (\hat{f}_{n,N}-\hat{f}_n)\|_{\HEK}$ yields, as a consequence, the tightness of  $a_{n,N} (\hat{f}_{n,N}-\hat{f}_n)$ in  $\cC(\cE)$.  
\begin{lemma}\label{Lemma:tight}
    The sequence $ (a_{n,N} (\hat{f}_{n,N}-\hat{f}_n))_{n,N}$ is tight in  $\cC(\cE)$, i.e., \eqref{condition} holds. 
\end{lemma}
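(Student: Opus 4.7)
The plan is to establish a uniform-in-probability Lipschitz bound on $X_{n,N} \defeq a_{n,N}(\hat{f}_{n,N} - \hat{f}_n)$ viewed as an element of $\cC(\cE)$, which will immediately yield the equicontinuity condition \eqref{condition} required for tightness.

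First, I would use the reproducing property together with Cauchy--Schwarz inequality in $\HEK$. For any $f \in \HEK$ and $x,x' \in \cE$,
\begin{equation*}
|f(x) - f(x')| \;=\; \bigl|\langle f, K_{\cE,x} - K_{\cE,x'}\rangle_{\HEK}\bigr| \;\leq\; \|f\|_{\HEK}\,\|K_{\cE,x} - K_{\cE,x'}\|_{\HEK}.
\end{equation*}
Then I would control the RKHS distance between canonical features using the explicit form of $K$ in \eqref{eq:squared:exponential}:
\begin{equation*}
\|K_{\cE,x} - K_{\cE,x'}\|_{\HEK}^2 \;=\; 2F(0) - 2F(\|x-x'\|_{\HH}) \;=\; 2\bigl(1 - e^{-\|x-x'\|_{\HH}^2}\bigr).
\end{equation*}
Invoking Lemma~\ref{lemma:holder:square:exponential}, we get $\|K_{\cE,x} - K_{\cE,x'}\|_{\HEK} \leq \sqrt{2 A_F}\,\|x-x'\|_{\HH}$.

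Combining these two displays, applied to $f = X_{n,N}$, gives for every $\delta>0$,
\begin{equation*}
\sup_{\substack{x,x' \in \cE \\ \|x-x'\|_{\HH} < \delta}} |X_{n,N}(x) - X_{n,N}(x')| \;\leq\; \sqrt{2 A_F}\,\delta \cdot \|X_{n,N}\|_{\HEK}.
\end{equation*}
By the hypothesis that $\|X_{n,N}\|_{\HEK} = \cO_{\bP}(1)$, for any $\tau>0$ there exists $M>0$ such that $\mathbb{P}(\|X_{n,N}\|_{\HEK} > M) < \tau$ uniformly in $n,N$ large enough. Given $\mu >0$, choosing $\delta = \mu/(M\sqrt{2A_F})$ then yields
\begin{equation*}
\mathbb{P}\!\left(\sup_{\substack{x,x' \in \cE \\ \|x-x'\|_{\HH} < \delta}} |X_{n,N}(x) - X_{n,N}(x')| > \mu\right) \;\leq\; \mathbb{P}(\|X_{n,N}\|_{\HEK} > M) \;<\; \tau,
\end{equation*}
which is exactly \eqref{condition}. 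There is essentially no hard step here: the argument is a direct consequence of the reproducing property plus the Hölder-type regularity of the squared-exponential kernel already recorded in Lemma~\ref{lemma:holder:square:exponential}; the only mild point is to observe that the hypothesis on the RKHS norm of $X_{n,N}$ can be converted into boundedness in probability of a single scalar quantity, to which Markov-type reasoning applies uniformly in $n,N$.
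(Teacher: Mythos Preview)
Your proof is correct and follows essentially the same route as the paper's: reproducing property plus Cauchy--Schwarz to bound increments by $\|X_{n,N}\|_{\HEK}\,\|K_{\cE,x}-K_{\cE,x'}\|_{\HEK}$, then Lemma~\ref{lemma:holder:square:exponential} to turn the latter into a Lipschitz factor in $\|x-x'\|_{\HH}$, and finally the assumption $\|X_{n,N}\|_{\HEK}=\cO_{\bP}(1)$ to conclude \eqref{condition}. Your write-up is slightly more explicit in the final step (choosing $M$ and $\delta$), but the argument is identical in substance.
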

\begin{proof}
    For the sake of readability, we denote 
    \[
    \omega_{n,N}(\delta)=\sup_{ \substack{     x,x' \in \cE \\     \|x-x'\|_{\HH}<\delta}} |a_{n,N} (\hat{f}_{n,N}-\hat{f}_n)(x)-a_{n,N} (\hat{f}_{n,N}-\hat{f}_n)(x')|.
    \]
    The reproducing property yields
    \[
    \omega_{n,N}(\delta)=\sup_{ \substack{     x,x' \in \cE \\     \|x-x'\|_{\HH}<\delta}} |\langle a_{n,N} (\hat{f}_{n,N}-\hat{f}_n), K_{\cE,x}-K_{\cE,x'}\rangle_{\HEK}|,
    \]
 which is easier to bound: 
 \begin{align*}
      \omega_{n,N}(\delta)
& \leq \sup_{ \substack{     x,x' \in \cE \\     \|x-x'\|_{\HH}<\delta}} \|  a_{n,N} (\hat{f}_{n,N}-\hat{f}_n)\|_{\HEK} \| K_{\cE,x}-K_{\cE,x'}\|_{\HEK}\\
    &  =\sup_{ \substack{     x,x' \in \cE \\     \|x-x'\|_{\HH}<\delta}} \|  a_{n,N} (\hat{f}_{n,N}-\hat{f}_n)\|_{\HEK} \sqrt{2-2K(x,x')}.
 \end{align*}

Via Lemma \ref{lemma:holder:square:exponential}, we have, 
for some constant $c_1$, 
$$ 
      \omega_{n,N}(\delta)\leq c_1 \delta \|a_{n,N} (\hat{f}_{n,N}-\hat{f}_n)\|_{\HEK},
$$
where the assumption $\|a_{n,N} (\hat{f}_{n,N}-\hat{f}_n)\|_{\HEK}=\cO_{\bP}(1)$ concludes the proof. 
\end{proof}

Therefore, the  sequence  $a_{n,N} (\hat{f}_{n,N}-\hat{f}_n)$ is tight, so we need to find the possible limits of its subsequences. 
To do so, we compute the gradient of $R_n:\HEK\to \R$ given in \eqref{eq:Rn}, at $f \in \HEK$. This gradient is denoted $R_n'(f)$ and defined as
\begin{equation*} 
    R_n'(f) 
    =
    \frac{1}{n}
    \sum_{i=1}^n
    2\left( 
f(x_i) - Y_i  
    \right)
 K_{\cE,x_i}
    +
2 \lambda f.
\end{equation*}

As $\hat{f}_n$ is the unique maximizer of $R_n$, we can check that $R_n'(\hat{f}_n)=0$. 
We can also simply check that, for all $ f , \psi \in \HEK$,
\[
R_n'(f+\psi) - R_n'(f) = \Theta_n \psi +
   2 \lambda \psi.
\]

As a consequence, taking $f=\hat{f}_n$ and $\psi=(\hat{f}_{n,N}-\hat{f}_n)$ we obtain
\begin{equation} \label{eq:proof:sharper:trois}
R_n'(\hat{f}_{n,N}) - R_n'(\hat{f}_n)
=
\Theta_n (  \hat{f}_{n,N}-\hat{f}_n )
+ 2 \lambda (  \hat{f}_{n,N}-\hat{f}_n ).
\end{equation}
 Let us define $R_{n,N}'$ from the expression of $R_{n,N}$ in \eqref{eq:Rn:empirical} similarly as $R_{n}'$, with 
\begin{equation*} 
    R_{n,N}'(f) 
    =
    \frac{1}{n}
    \sum_{i=1}^n
    2\left( 
f(x_{N,i}) - Y_i 
    \right)
 K_{\cE,x_{N,i}}
    +
2 \lambda f.
\end{equation*}
Since $R_{n,N}'(\hat{f}_{n,N})=R_n'(\hat{f}_n)=0$, replacing $R_n'(\hat{f}_n)$ by $R_{n,N}'(\hat{f}_{n,N}) $  in \eqref{eq:proof:sharper:trois}, we obtain 
\begin{equation} \label{eq:proof:sharper:trois.1}
 R_n'(\hat{f}_{n,N})
 -
 R_{n,N}'(\hat{f}_{n,N})
=
\Theta_n (  \hat{f}_{n,N}-\hat{f}_n )
+ 2 \lambda (  \hat{f}_{n,N}-\hat{f}_n ).
\end{equation}
Due to Lemma~\ref{conditioCOuntinousTCL},
$$ \|\Theta_n-\Theta\|_{OP(\HEK,\HEK)}\|\hat{f}_{n,N}-\hat{f}_n\|_{\HEK}=o_{\bP}(\|\hat{f}_{n,N}-\hat{f}_n\|_{\HEK}),$$
so that, because $\lambda \to 0$,
$$a_{n,N}
(
R_n'(\hat{f}_{n,N})
-
R_{n,N}'(\hat{f}_{n,N})
)
= \Theta (a_{n,N}(\hat{f}_{n,N}-\hat{f}_n))+o_{\bP}(a_{n,N}\|\hat{f}_{n,N}-\hat{f}_n \|_{\HEK}).$$
Here and in the sequel, for two sequences $(g_{n,N})$ and $(m_{n,N})$, with $g_{n,N} \in \HEK$ and $m_{n,N} \geq 0$, we write $g_{n,N} = o_{\bP}(m_{n,N})$ when $\| g_{n,N} \|_{\HEK} = o_{\bP}(m_{n,N})$ as $n,N \to \infty$. 
Then, by assumption, 
\begin{equation} \label{eq:in:proof:better:rate}
    a_{n,N}(
    R_n'(\hat{f}_{n,N})
    -
    R_{n,N}'(\hat{f}_{n,N})
    )
    = \Theta (a_{n,N}(\hat{f}_{n,N}-\hat{f}_n))+o_{\bP}(1).
\end{equation}
Then, for all fixed $\psi\in \HEK$, letting $D_{n,N} = \hat{f}_{n,N} - \hat{f}_{n}$,
\begin{align*}
   &
   \left \langle R_{n}'(\hat{f}_{n,N})
   -
    R_{n,N}'(\hat{f}_{n,N}) , \psi
    \right \rangle_{\HEK}
    \\
    = &
    \frac{2}{n}
    \sum_{i=1}^n
    \left(Y_i- \hat{f}_{n,N}(x_{N,i})\right)\psi(x_{N,i})
    -
    \frac{2}{n}
    \sum_{i=1}^n
    \left( 
Y_i - \hat{f}_{n,N}(x_{i})
    \right)\psi(x_{i})\\
    = &
    \frac{2}{n}
    \sum_{i=1}^n
Y_i \left( \psi( x_{N,i}) - \psi(x_i) \right)
    +
    \frac{2}{n}
    \sum_{i=1}^n
    \hat{f}_{n,N}(x_{i})
    \psi(x_i)
    -
    \hat{f}_{n,N}(x_{N,i})
    \psi(x_{N,i})
\\
= &
    \frac{2}{n}
    \sum_{i=1}^n
Y_i \left( \psi( x_{N,i}) - \psi(x_i) \right)
    +
    \frac{2}{n}
    \sum_{i=1}^n
    (\hat{f}_{n,N}(x_{i}) - \hat{f}_{n,N}(x_{{N,i}}))
    \psi(x_i)
    \\
& +
    \frac{2}{n}
    \sum_{i=1}^n
    \hat{f}_{n,N}(x_{N,i})
    \left(
    \psi(x_i) - \psi(x_{N,i})
    \right)
\\
= &
\underbrace{
    \frac{2}{n}
    \sum_{i=1}^n
Y_i \left( \psi( x_{N,i}) - \psi(x_i) \right)
 }_{ = T_{1,n,N}(\psi)}
    +
    \underbrace{
    \frac{2}{n}
    \sum_{i=1}^n
    (\hat{f}_{n}(x_{i}) - \hat{f}_{n}(x_{{N,i}}))
    \psi(x_i)
     }_{ = T_{2,n,N}(\psi)}
    \\
&     +
\underbrace{
    \frac{2}{n}
    \sum_{i=1}^n
    (D_{n,N}(x_{i}) - D_{n,N}(x_{{N,i}}))
    \psi(x_i)
     }_{ = T_{3,n,N}(\psi)}
     +
\underbrace{
    \frac{2}{n}
    \sum_{i=1}^n
    \hat{f}_{n}(x_{i}) 
    \left(
    \psi(x_i) - \psi(x_{N,i})
    \right)
     }_{ = T_{4,n,N}(\psi)}
    \\
& +
\underbrace{
    \frac{2}{n}
    \sum_{i=1}^n
\left( 
\hat{f}_{n}(x_{N,i})
-
\hat{f}_{n}(x_{i})
\right)
 \left(
    \psi(x_i) - \psi(x_{N,i})
    \right)
 }_{ = T_{5,n,N}(\psi)}
 +
    \underbrace{
    \frac{2}{n}
    \sum_{i=1}^n
    D_{n,N}(x_{N,i})
    \left(
    \psi(x_i) - \psi(x_{N,i})
    \right).
     }_{ = T_{6,n,N}(\psi)}
\end{align*}
Using similar arguments as in Section \ref{section:proof:main:error:bound}, we can show
\[
\bE \left[ \left| T_{1,n,N}(\psi) \right| \right] \leq  \bE[  Y_{\max,n}  ] 
\| \psi \|_{\HEK}
\cO
\left( 
\frac{1}{N} +  \frac{1}{\sqrt{nN}}  
\right), 
\]
\[
\bE \left[ \left| T_{2,n,N}(\psi) \right| \right] \leq  \bE[ c_n ]  
\| \psi \|_{\HEK}
\cO 
\left( 
\frac{1}{N} 
+  \frac{1}{\sqrt{nN}}  
\right),
\] 
\[
| T_{3,n,N}(\psi) | \leq \| D_{n,N} \|_{\HEK} \| \psi \|_{\HEK} \cO_{\bP} \left( \frac{1}{\sqrt{N}} \right), 
\]
\[
\bE \left[ \left| T_{4,n,N}(\psi) \right| \right] \leq  \bE[ c_n ]  
\| \psi \|_{\HEK}
\cO
\left( 
\frac{1}{N} 
+ 
\frac{1}{ \sqrt{nN} }
\right),
\]
\[
\bE \left[ \left| T_{5,n,N}(\psi) \right| \right] 
\leq 
\bE[ c_n ] 
\| \psi \|_{\HEK}
\cO \left( \frac{1}{N} \right),
\] 
and
\[
| | T_{6,n,N}(\psi) | \leq \| D_{n,N} \|_{\HEK} \| \psi \|_{\HEK} 
\cO_{\bP} \left( \frac{1}{\sqrt{N}} \right).
\]
By the assumptions on $a_{n,N}$, we thus have $a_{n,N}  T_{\ell,n,N}(\psi) = o_{\bP}(1)$, for $\ell=1,\ldots,6$.
Hence eventually we have the rate $ 
\langle a_{n,N}
R_{n}'(\hat{f}_{n,N}) -
a_{n,N} R_{n,N}'(\hat{f}_{n,N})
,
\psi
\rangle_{\HEK}
= o_{\bP}(1)$, for all fixed $\psi \in \HEK$.
Hence, from \eqref{eq:in:proof:better:rate}, we have 
$\langle \Theta (a_{n,N}(\hat{f}_{n,N}-\hat{f}_n)), \psi \rangle_{\HEK} =o_{\bP}(1)$, for any fixed $ \psi\in \HEK$.


Via Lemma~\ref{Lemma:tight}, we know that $a_{n,N}(\hat{f}_{n,N}-\hat{f}_n)$ is tight in $\cC(\cE)$. Therefore, for every subsequence,
there exists a random variable $X\in \cC(\cE)$ such that, along a further subsequence, for any bounded continuous function $g:\cC(\cE)\to \R $,
\begin{equation} \label{eq:along:subsequence}
    \bE [g(a_{n,N}(\hat{f}_{n,N}-\hat{f}_n))]\to \bE[g(X)]. 
    \end{equation} 

Note that, in Lemma \ref{conditioCOuntinousTCL}, $\Theta$ is defined from $\HEK$ to $\HEK$. However, the expression for $\Theta$ in \eqref{eq:Theta} also defines an operator $\Theta_{\cC}$ from $\cC(\cE)$ to $\HEK$ that coincides with $\Theta$ on $\HEK$. 

Let us show that if a sequence $(g_\ell)_{\ell \in \N}$, with $g_\ell \in \cC(\cE)$, satisfies $\|g_{\ell}\|_{\infty}\to 0$, then 
$$ \|\Theta_{\cC} (g_{\ell})\|_{\infty}\to 0.$$ 
To do so, since 
$$ (\Theta_{\cC} g_{\ell})(x)
=
\int_{\cE} 
2
g_{\ell}(y)
K(x,y) 
\dd \cL(y)$$
and $\| g_{\ell}\|_{\infty}\to 0$, it holds
$$ \vert (\Theta_{\cC} g_{\ell})(x) \vert 
\leq 2\| g_{\ell}\|_{\infty}
\int_{\cE} 
K(x,y) 
\dd \cL(y)\leq 2\| g_{\ell}\|_{\infty}.$$
Therefore, $ \|\Theta_{\cC} (g_{\ell})\|_{\infty}\to 0$, which means that $\Theta_{\cC} $ is a continuous operator on $ \cC(\cE)$, which remains the limit of $\Theta_n$ on $\HH_{\cE,K} \subset \cC(\cE)$. 
From \eqref{eq:along:subsequence},
for any bounded continuous $g:\cC(\cE)\to \R $,
it holds
\begin{equation} \label{eq:proof:sharper:two}
\bE [g(\Theta_{\cC}(a_{n,N}(\hat{f}_{n,N}-\hat{f}_n)))]\to \bE[g(\Theta_{\cC} X)]. 
\end{equation}
As seen before, we have 
$\langle \Theta_{\cC} (a_{n,N}(\hat{f}_{n,N}-\hat{f}_n)), \psi\rangle_{\HH_{\cE,K}}=o_{\bP}(1)$, for any $ \psi\in \HEK$, so that (set $\psi=K_{\cE,x}$) $(\Theta_{\cC} (a_{n,N}(\hat{f}_{n,N}-\hat{f}_n)))(x)=o_{\bP}(1)$. 
Therefore, 
$$ \bE [h(\{\Theta_{\cC}(a_{n,N}(\hat{f}_{n,N}-\hat{f}_n))\}(x))]\to \bE [h(0)], $$
for any bounded continuous 
 $h:\R \to \R $ and $x\in \cE$. 
  For any bounded continuous 
 $h:\R \to \R $ and $x\in \cE$, since $ \psi \mapsto h(\psi(x))$
 is continuous on $\cC(\cE)$, 
 it holds from \eqref{eq:proof:sharper:two} that
 $$ \bE[h((\Theta_{\cC} X )(x))]= \bE[h(0)] =  h(0).$$
This implies $(\Theta_{\cC} X )(x)=0$ almost surely, for all $x\in \cE$.
Therefore, since $\Theta_{\cC} X$ is continuous, $\Theta_{\cC} X=0$ almost surely.
Hence, almost surely,
%
\begin{align*}
0 = & 
\|  \Theta_{\cC} X \|_{\HEK}^2
\\
= &
4 \int_{\cE}
\int_{\cE}
X(u) X(v) 
K( u,v )
\dd \cL(u)
\dd \cL(v).
\end{align*}
Since $K$ takes strictly positive values, $(u,v) \mapsto X(u) X(v)$ is zero $\cL$-almost everywhere on the compact set $\cE$, almost surely. By continuity, $X$ is the zero function on $\cE$, almost surely.

Hence, we have proved that $a_{n,N}(\hat{f}_{n,N}-\hat{f}_n)$ is tight in $\cC(\cE)$ and any limit in distribution along subsequences is the degenerate random variable $0$. 
Therefore, $ a_{n,N}(\hat{f}_{n,N}-\hat{f}_n)$ tends to $0$ in probability, which means that $\|  a_{n,N}(\hat{f}_{n,N}-\hat{f}_n)\|_{\cE,\infty}=o_{\bP}(1)$. This concludes the proof.

\section{Proofs for Section \ref{subsection:convergence:rates:hilbert}} 

\subsection{Proofs of Theorem \ref{theorem:general:minimax}}

We have
\begin{align*}
\sqrt{
\int_{\HH}
\left( 
f^{\star}(x)
-
\hat{f}_{n,N}(x)
\right)^2
\dd \mathcal{L}(x)
}
\leq &
\sqrt{
\int_{\HH}
\left( 
f^{\star}(x)
-
\hat{f}_{n}(x)
\right)^2
\dd \mathcal{L}(x)
} 
\\
 & +
\sqrt{
\int_{\HH}
\left( 
\hat{f}_n(x)
-
\hat{f}_{n,N}(x)
\right)^2
\dd \mathcal{L}(x)
}.
\end{align*}

\citet[Thm. 1]{caponnetto2007optimal} shows that 
\[
\sqrt{
\int_{\HH}
\left( 
f^{\star}(x)
-
\hat{f}_{n}(x)
\right)^2
\dd \mathcal{L}(x)
}
=
\cO _{\bP} 
\left( 
n^{-\frac{bc}{2(bc+1)}}
\right).
\]
Note that with the setting of Theorem \ref{theorem:general:minimax}, it is straigthforward to check Hypotheses 1 and 2 in \cite{caponnetto2007optimal}. 
Hence, it just remains to show that 
\begin{equation*} 
E_{n,N}
=
\sqrt{
\int_{\HH}
\left( 
\hat{f}_n(x)
-
\hat{f}_{n,N}(x)
\right)^2
\dd \mathcal{L}(x)
}
=
\cO _{\bP} 
\left( 
n^{-\frac{bc}{2(bc+1)}}
\right).
\end{equation*}

Now, before applying Theorem \ref{theorem:error:bound}, we show that $c_n = \| \hat{f}_n \|_{\HK} $ there is bounded in probability. The developments in \citet[Sect. 9.1]{szabo2016learning} (using \cite{caponnetto2007optimal}), although written for a specific Hilbert space $\HH$ based on mean embeddings of distributions, can actually be seen to hold for a general $\HH$ as in the context of Theorem \ref{theorem:general:minimax}. These developments yield
\begin{align*}
c_n^2
=
\cO_{\bP}
\left( 
\frac{1}{\lambda^2 n^2}
+
\frac{\cN(\lambda)}{n \lambda}
+
\frac{\cB(\lambda)}{\lambda^2 n^2}
+
\frac{\cA(\lambda)}{\lambda^2 n}
+
\cB(\lambda)
+
1
\right).
\end{align*}
Above, the quantities $\cN(\lambda)$, $\cB(\lambda)$ and $\cA(\lambda)$ are defined in \citet[Sect. 5.2]{caponnetto2007optimal} and it is shown
in Proposition 3 there
that $\cN(\lambda) = \cO( \lambda^{-1/b} )$, $\cB(\lambda) = \cO( \lambda^{c-1} )$ and $\cA(\lambda) = \cO( \lambda^{c} )$. Hence, it is simple to check that $c_n = \cO_{\bP}(1)$.

Then, Theorem \ref{theorem:error:bound} yields, with $\cB_n$ the $\sigma$-algebra generated by $(\mu_i,Y_i)_{i=1}^n$, and with a constant $c_1$,
\begin{align*}
\bE [E_{n,N} | \cB_n]
& \leq
    \frac{c_1 (Y_{\max} + c_n)  }{ \lambda N} 
+
\frac{c_1 (Y_{\max} + c_n)}{\lambda \sqrt{n} \sqrt{N}} 
\\ 
& + 
\left(
1
+
\frac{\sqrt{N} }{\sqrt{n} } 
\right)^{-1}
\left(
 \frac{c_1  (Y_{\max} + c_n)}{\lambda n }
+
\frac{c_1  (Y_{\max} + c_n )}{\lambda^2 n \sqrt{N}}
\right).
\end{align*}

With Markov inequality, applied conditionally to $\cB_n$ this implies that 

\begin{align*}
E_{n,N}
& = \cO_{\bP}
\left(
    \frac{ Y_{\max} + c_n }{ \lambda N} 
+
\frac{ Y_{\max} + c_n}{\lambda \sqrt{n} \sqrt{N}} 
\right)
\\ 
& + 
\left(
1
+
\frac{\sqrt{N} }{\sqrt{n} } 
\right)^{-1}
\cO_{\bP}
\left(
 \frac{  Y_{\max} + c_n}{\lambda n }
+
\frac{  Y_{\max} + c_n  }{\lambda^2 n \sqrt{N}}
\right).
\end{align*}

Let us now use that $c_n = \cO_{\bP}(1)$ and that $\lambda n^{\frac{b}{bc+1}}$ is lower and upper bounded.
Furthermore, let $a$ be as in the theorem statement, so that
$N / n^{ a} $ is lower-bounded. This yields

\begin{align} \label{eq:bound:EnN:mean:emb}
E_{n,N}
& = \cO_{\bP}
\left(
n^{\frac{b}{bc+1} - a}
+
n^{\frac{b}{bc+1} - \frac{1}{2} - \frac{a}{2}}
\right)
\\ 
& + 
\min(
1
,
n^{\frac{1}{2} - \frac{a}{2}}
)
\cO_{\bP}
\left(
n^{\frac{b}{bc+1} - 1}
+
n^{\frac{2b}{bc+1} - 1 - \frac{a}{2}}
\right). \notag
\end{align}

We now study whether the bound in \eqref{eq:bound:EnN:mean:emb} can be of order $ \mathcal{O}_{\bP}
\left( 
n^{-\frac{bc}{2(bc+1)}}
\right)$ with $a \leq 1$. Necessary and sufficient conditions for this are
\begin{align*}
& \frac{b}{bc+1} - a
\leq 
-\frac{bc}{2(bc+1)},
~ ~ ~
\frac{b}{bc+1} - \frac{1}{2} - \frac{a}{2}
\leq 
-\frac{bc}{2(bc+1)}, \\
& 
\frac{b}{bc+1} - 1
\leq 
-\frac{bc}{2(bc+1)},
~ ~ ~
\frac{2b}{bc+1} - 1 - \frac{a}{2}
\leq 
-\frac{bc}{2(bc+1)}.
\end{align*}

Using that we aim for $a \leq 1$, so that the third condition is implied by the first one, 
the conditions are equivalent to

\begin{align*}
& a \geq 
\frac{b + \frac{bc}{2}}{bc+1},
~ ~ ~
a \geq 
\frac{2b -1}{bc+1}, 
~ ~ ~
a \geq 
\frac{
4b - bc - 2
}{
bc+1
}.
\end{align*}

The three lower bounds on $a$ above are smaller or equal to $1$ if and only if $b(1 - \frac{c}{2}) \leq \frac{3}{4}$. Hence, in this case $a = \max(
\frac{b + \frac{bc}{2}}{bc+1}
,
\frac{2b -1}{bc+1}
,
\frac{
4b - bc - 2
}{
bc+1
}
) \leq 1$ indeed yields 
$E_{n,N} = \mathcal{O}_{\bP}
\left( 
n^{-\frac{bc}{2(bc+1)}}
\right)$ with $N / n^a$ lower-bounded.

We now consider the case $b(1 - \frac{c}{2}) > \frac{3}{4}$, and we now study whether the bound in \eqref{eq:bound:EnN:mean:emb} can be of order $ \mathcal{O}_{\bP}
\left( 
n^{-\frac{bc}{2(bc+1)}}
\right)$ with $a > 1$. Necessary and sufficient conditions for this are
\begin{align*}
& \frac{b}{bc+1} - a
\leq 
-\frac{bc}{2(bc+1)},
~ ~ ~
\frac{b}{bc+1} - \frac{1}{2} - \frac{a}{2}
\leq 
-\frac{bc}{2(bc+1)}, \\
& 
\frac{1}{2} - \frac{a}{2}
+
\frac{b}{bc+1} - 1
\leq 
-\frac{bc}{2(bc+1)},
~ ~ ~
\frac{1}{2} - \frac{a}{2}
+
\frac{2b}{bc+1} - 1 - \frac{a}{2}
\leq 
-\frac{bc}{2(bc+1)}.
\end{align*}

These conditions are equivalent to 
\begin{align*}
& a \geq 
\frac{b + \frac{bc}{2}}{bc+1},
~ ~ ~
a \geq 
\frac{2b -\frac{1}{2}}{bc+1}. 
~ ~ ~
\end{align*}

Hence, when $b(1 - \frac{c}{2}) > \frac{3}{4}$, 
taking $a = \max( \frac{b + \frac{bc}{2}}{bc+1} , \frac{2b -\frac{1}{2}}{bc+1} )$,
we indeed have $E_{n,N} = \mathcal{O}_{\bP}
\left( 
n^{-\frac{bc}{2(bc+1)}}
\right)$ with $N / n^a$ lower-bounded. This concludes the proof.

\section{Proofs for Section \ref{subsection:application:sinkhorn} }

\subsection{Proof of Lemma \ref{lemma:near:unbias}}
\label{supplement:subsection:proof:sinkhorn}

For $p>0$, we let $\cC^p(\Omega)$ be the space of functions $f: \Omega\to \mathbb{R}$ that are $\lfloor p \rfloor$ times differentiable, with $\lfloor . \rfloor$ the integer part and with $\|f \|_{\cC^p(\Omega)} < \infty$, where
\begin{equation*}
    \|f \|_{\cC^p(\Omega)} = \sum_{\beta=0}^{\lfloor p \rfloor}\sum_{|\alpha|= \beta}\|D^{\alpha} f \|_{\infty}.
    \end{equation*}
    Above $\alpha = (\alpha_1, \ldots , \alpha_d) \in \mathbb{N}^d$ with $\sum_{\ell=1}^d \alpha_\ell = \beta$ and $D^{\alpha} = \partial^\beta / \partial _{x_1}^{\alpha_1} \cdots \partial _{x_d}^{\alpha_d}$. The space $\cC^p(\Omega)$ is endowed with the norm $\|\cdot \|_{\cC^p(\Omega)}$.

A distance between two measures $\mu , \nu \in \cPO$ can thus be defined as
\begin{equation*} 
    \| \mu - \nu \|_p =
    \sup_{f\in \cC^{p}(\Omega),\ \| f\|_{\cC^p(\Omega)}\leq 1}
    \int f(x) (\dd \mu(x) - \dd \nu(x)).
\end{equation*}

Fix $p > d$. It can be seen from the proof of \citet[Thm. 2.1]{gonzalez2022weak} 
and from \citet[Prop. 3.1]{carlierdifferential2020}
that we have, with a constant $C_{\Omega}$,
\[
||g^{\mu^N}  - g^{\mu}
- \mathcal{B}\mathcal{A}(\mu^N-\mu) ||_{\mathcal{L}^2(\mathcal{U})}\leq C_{\Omega} \| \mu^N-\mu  \|_p^2,
\]
with $\mathcal{B}: \mathcal{L}^2(\mathcal{U})\to \mathcal{L}^2(\mathcal{U})$ being a bounded linear operator---with unimportant shape for us---and $\mathcal{A}$ defined by, for $\eta_1, \eta_2 \in \cPO$, $u \in \Omega$,
\[
(\mathcal{A}(\eta_1 - \eta_2))(u) 
=
\int_{\Omega}
e^{\| u - v \|^2}
\dd (\eta_1 - \eta_2)(v).
\]
We call $a_N=\mathcal{B}\mathcal{A}(\mu^N-\mu)$ and $$b_N= g^{\mu^N} - g^{\mu} -\mathcal{B}\mathcal{A}(\mu^N-\mu).$$
As a consequence, 
$ g^{\mu^N} - g^{\mu}
=a_N+b_N$. 

First, the proof of \citet[Eq. 4.13]{Barrio2022AnIC} yields $\bE[ \| \mu^N-\mu  \|_p^{2s} ]\leq  \frac{C_{\Omega}}{N^s}$ (up to increasing the value of $C_{\Omega}$).
From this, we obtain 
$ \bE [ \|b_N\|_{\cL^2(\cU)}^s ] \leq  C_{\Omega}^s \bE [ \| \mu^N-\mu  \|_p^{2s} ] \leq  \frac{C_{\Omega}^{1+s}}{N^{s}}$.
Then, 
with a constant $c_1$, and letting $\| \cB \| = \sup_{\| h \|_{\cL^2(\cU)} \leq 1} \| \cB h \|_{\cL^2(\cU)}$ (the operator norm), we have
$ \bE [ \|a_N\|_{\cL^2(\cU)}^s ] \leq 
\|
\cB
\|^s
\bE [ 
\| \cA (\mu^N - \mu) \|_{\cL^2(\cU)}^s
]
\leq 
\|
\cB
\|^s
c_1
\bE [
\| \mu^N - \mu \|_p^s
]
\leq 
\frac{
\|
\cB
\|^s
c_1
\sqrt{C_{\Omega}}
}{
N^{s/2}
}.
$

Finally, the adjoint operator $\mathcal{B}^{\star}: \mathcal{L}^2(\mathcal{U}) \to  \mathcal{L}^2(\mathcal{U})$ satisfies, for any $h \in \cL^2(\cU)$, 
\[
 \langle h,
\mathcal{B}\mathcal{A}(\mu^N-\mu) 
\rangle_{\cL^2(\cU)} =   \langle \mathcal{B}^{\star}(h),
\mathcal{A}(\mu^N-\mu) \rangle_{\cL^2(\cU)}.
\]
Taking expectation first and then applying Fubini's theorem, we obtain 
\begin{multline*}
    \bE\left[ \langle h,
\mathcal{B}\mathcal{A}(\mu^N-\mu) \rangle_{\cL^2(\cU)} \right]=\bE\left[  \langle \mathcal{B}^{\star}(h),
\mathcal{A}(\mu^N-\mu) \rangle_{\cL^2(\cU)} \right]\\=
\int (\mathcal{B}^{\star}(h))(u) \bE \left[\int  e^{\| u-y\|^2} (\dd \mu^N - \dd \mu)(y)\right] \dd \mathcal{U}(u)=0. 
\end{multline*}
The proof is concluded.

\section{Proofs for Section \ref{subsection:application:mean:embedding}}

\subsection{Proof of Lemma \ref{lemma:near:unbiased:mean:embedding}}
\label{supplement:subsection:proof:unbiased:mean:embedding}

As in the statement of Condition \ref{condition:near:unbias}, the analysis here is conducted conditionally to $(\mu_i,Y_i)_{i=1}^n$ and we use the notation $\bE_n$ and $\bP_n$ to denote the conditional expectation and probability given $(\mu_i,Y_i)_{i=1}^n$.
The independence property of Condition~\ref{condition:near:unbias} clearly holds, together with the property on $(b_{N,i})_{i=1}^n$.
Let $x \in \HH_k$ be fixed. We have
\begin{align*}
\bE_n 
\left[
\left \langle x , x_{N,i} 
\right \rangle_{\Hk}
-
\left \langle x , x_{i}
\right \rangle_{\Hk}
\right]
= &
\bE_n 
\left[
\left \langle
x , \frac{1}{N} \sum_{j=1}^N k( X_{i,j} , \cdot ) 
\right \rangle_{\Hk}
-
\left \langle
x ,\int_{\Omega} k( t , \cdot ) \dd \mu_i(t) 
\right \rangle_{\Hk}
\right]
\\
= &
\bE_n 
\left[
\frac{1}{N} \sum_{j=1}^N x(X_{i,j})
-
\int_{\Omega} x(t) \dd \mu_i(t) 
\right]
\\= & 0.
\end{align*}

It remains to show the moment bound on $a_{N,i}$.
For $t \in \Omega$, we let $k_t = k( t , \cdot ) \in \HH_k$.
Note that
\[
a_{N,i}
=
\frac{1}{N}
\sum_{j=1}^N 
\left(
k_{X_{i,j}} 
-
\int_{\Omega} k_t \dd \mu_i(t)
\right)
\]
is an average of i.i.d. random centered elements of $\HH_k$. These random elements have norm bounded by $ B = 2 \sup_{u \in \Omega} \sqrt{k(u,u)}$. Hence, using \citet[Prop. 2 p. 345]{caponnetto2007optimal}, as in \citet[Sect. 7.3.1]{szabo2016learning}, we obtain, for $
\eta >0$,
\[ 
\bP_n 
\left( 
\|a_{N,i} \|_{\Hk}
\geq 2 
\left(
\frac{2 B}{n}
+ \frac{B}{\sqrt{n}}
\right)
\log( 2 / \eta )
\right)
\leq \eta.
\]
From there, it is simple to show that for any $s >0$, there is a constant $c_s$ such that
$\bE_n [ \|a_{N,i} \|_{\Hk}^s ] \leq c_s N^{-s/2}$.
This completes the proof.

\section{Proofs for Section \ref{subsection:application:sliced:wasserstein}}
	
\subsection{ Proof of Lemma \ref{lemma:example:sliced}} 

We write the proof only for the case $d \geq 2$, since the proof for $d =  1$ uses similar arguments and is simpler.
Fix $\delta$ with $0 < \delta < \epsilon$.
Let us fix $i$ and a realization of $\mu_i$ for which the almost sure statements in the lemma hold, and let us work conditionally to this realization of $\mu_i$.
Let us fix $\theta \in \cS^{d-1}$. 
We let $\cK$ be the support of $\mu_i$. 
We let $ \theta , v_2 , \ldots , v_d $ be an orthonormal basis of $\mathbb{R}^d$. We let $g : \mathbb{R}^d \to \mathbb{R}$ be the density of $\mu_i$
(which is zero outside of $\cK \subseteq \Omega$)
 and we let $h : \mathbb{R}^d \to \mathbb{R}$ be the function defined by, for $x \in \mathbb{R}^d$,
\[
g(x)
=
h(  x^\top  \theta , x^\top  v_2, \ldots ,
x^\top  v_d ).
\]
We let similarly 
$ I_{\cK}: \mathbb{R}^d \to \mathbb{R}$ be the function defined by, for $x \in \mathbb{R}^d$,
\[
 \mathrm{1}_{ x \in \cK}
=
I_{\cK}(  x^\top  \theta , x^\top  v_2, \ldots ,
x^\top  v_d ).
\]
Note that $\inf\{ h(x_1 , \ldots , x_d) ; I_{\cK}(x_1 , \ldots , x_d) = 1 \}
=
\inf \{ g(x) ; x \in \cK \}$.
Similarly $$\sup \{ h(x_1 , \ldots , x_d) ; I_{\cK}(x_1 , \ldots , x_d) = 1 \}
=
\sup \{ g(x) ; x \in \cK \}.$$ 
Note also that, by assumption,
$I_{\cK}$ is zero outside of $[-L,L]^d$, where $L = \kappa \sqrt{d}$.
Consider the set
\begin{align*}
S_1 =	
& \Big\{\ 
 x_1 \in \mathbb{R};
~ \text{the set} ~ \{   x_2 , \ldots , x_d \in \mathbb{R} ~ \text{s.t.} ~ h(x_1 , \ldots , x_d) >0 \} 
\\
& 
~ \text{has non-zero Lebesgue masure in $
\mathbb{R}^{d-1}$} 
   \Big\}.
\end{align*}

Exploiting the convexity of the support of $\mu_i$, this set $S_1$ is a segment of the form $[x_{\inf} , x_{\sup}]$, $(x_{\inf} , x_{\sup}]$, $[x_{\inf} , x_{\sup})$ or $(x_{\inf} , x_{\sup})$, with $[- \tau , \tau ] \subseteq (x_{\inf} , x_{\sup})$. Then for $x_1 \in (x_{\inf} , x_{\sup})$, the density of $ \theta^\top X_{i,1} $ at $x_1$, written $g_1(x_1)$, is
\begin{equation} \label{eq:gunxun}
\int_{[-L,L]^{d-1}}
I_{\cK}( x_1 , x_2, \ldots , x_d )
h(x_1 , x_2 , \ldots , x_d)
\dd x_2 \cdots \dd x_d.
\end{equation}
By the definition of the set $S_1$, we see that the density $g_1$ is strictly positive on $(x_{\inf} , x_{\sup})$, which shows that $F_{\mu_{i,\theta}}$ is bijective from $(x_{\inf} , x_{\sup})$ to $(0,1)$. We write $F_{i,\theta}^{-1}$ for its inverse function.

If $F_{i,\theta}^{-1}(\delta) \leq 0 $, consider the following.
Since the function $h I_{\cK}$ is bounded by $T$, there is a deterministic constant $c_1 >0$ such that there are $d-1$ segments
$[r_2, s_2], \ldots , [r_d , s_d] \subset \mathbb{R}$ of length at least $c_1$ such that for some $\bar{x}_1 \in [x_{\inf}, F_{i,\theta}^{-1}(\delta/2)] $, $\bar{x}_1 \theta + [r_2, s_2] v_2 + \cdots + [r_d , s_d] v_d  \subseteq \cK$.
Considering the convex hull of the union of $\bar{x}_1 \theta + [r_2, s_2] v_2 + \cdots + [r_d , s_d] v_d$ and $B(0,\tau)$, that belongs to $\cK$ which is convex, since $h$ is lower-bounded when $I_{\cK}$ is non-zero in \eqref{eq:gunxun}, the density $g_1$ is lower-bounded on $[ F_{i,\theta}^{-1}(\delta) , 0] $, by a deterministic constant.

If $F_{i,\theta}^{-1}(1-\delta) \geq 0 $, by the same reasoning, this density is also lower-bounded on $[ 0 ,  F_{i,\theta}^{-1}(1-\delta) ]$, by a deterministic constant.
In the end, in all cases, the density $g_1$ is lower-bounded on $[ F_{i,\theta}^{-1}(\delta) , F_{i,\theta}^{-1}(1-\delta)] $, by a deterministic constant. 

From  \eqref{eq:gunxun}, 
and since $h$ is upper bounded by $T$, then also $g_1$ is upper bounded by $c_2$, with a deterministic constant $c_2$.

Let us now show that $g_1$
is differentiable on $(x_{\inf},x_{\sup})$, with derivative bounded by a deterministic constant $c_3$. For this, we will use that $h$ is
differentiable on $\{  
x ; I_{\cK}(x) = 1 \}$, with gradient bounded in Euclidean norm by $T$.
Consider the set $S_{x_1}$
of the $ x_2 , \ldots , x_{d} $ such that the value of $x'_1 \mapsto I_{\cK}(x'_1,x_2,\ldots,x_d)$ is not locally constant at $x_1$.
It is sufficient so show that for $x_1 \in (x_{\inf},x_{\sup})$, 
$S_{x_1}$ has zero Lebesgue measure. This enables to conclude with the dominated convergence theorem applied to \eqref{eq:gunxun}.

 Since $x_1 \in (x_{\inf} , x_{\sup})$, by convexity, there exists $\tilde{x}_2 , \ldots, \tilde{x}_d$ such that $(x_1 , \tilde{x}_2 , \ldots, \tilde{x}_d)$ is in the interior of $\{ 
x ; I_{\cK}(x) = 1 \}$. 
Consider also $(\bar{x}_2 , \ldots, \bar{x}_d) $ in the interior of the convex set $C_{x_1}$ defined by $C_{x_1} = \{ 
x_2 , \ldots,  x_d ; 
I_{\cK}(x_1 , \ldots , x_d) = 1 \}$. Then by convexity, $(\bar{x}_2 , \ldots, \bar{x}_d) \not \in S_{x_1}$. Consider then $(\bar{x}_2 , \ldots, \bar{x}_d) \not \in C_{x_1}$. Since $\{  
x ; I_{\cK}(x) = 1 \}$ is closed, as $\cK$ is a probabilistic support, then $(\bar{x}_2 , \ldots, \bar{x}_d) \not \in S_{x_1}$.

Hence, we have shown that $S_{x_1}$ is included in the boundary of the closed convex set $ C_{x_1}$. This set has non-zero $d-1$-Lebesgue measure because  $x_1 \in (x_{\inf} , x_{\sup})$. Hence, it is well-known that this boundary of $C_{x_1}$ has zero $d-1$-Lebesgue measure. Hence, indeed $g_1$
is differentiable on $(x_{\inf},x_{\sup})$, with derivative bounded by a deterministic constant $c_3$.

Finally, we can exploit that the derivative of $F_{\mu_{i,\theta}}$ is $g_1$ to conclude that $F_{i,\theta}^{-1}$ is twice differentiable on $(\delta, 1- \delta)$ with first and second derivatives bounded in absolute value, by deterministic constants.

\subsection{Proof of Lemma \ref{lemma:near:unbiased:sliced:wasserstein}}
Let us fix $i$ and a realization of $\mu_i$ for which the almost sure statements in Condition \ref{cond:sliced:wasserstein} hold, and let us work conditionally to this realization of $\mu_i$.
In particular, in the proof, the notations $\bP$ and $\bE$ denote the conditional probability and expectation given $\mu_i$.
Fix $\theta \in \cS^{d-1}$ and $t \in (\epsilon,1-\epsilon)$.
For $j = 1 , \ldots, N$, we let $V_j = \theta^\top X_{i,j}$. 
We let $G$ and $G^{(N)}$ be the c.d.f. and empirical c.d.f. of $V_1,\ldots,V_N$. 
We also let $U_j = G(V_j)$, $j=1,\ldots,N$ and we remark that $U_1,\ldots,U_N$ are i.i.d random variables uniformly distributed on $[0,1]$, since $G$ is assumed to be bijective from $(a_i(\theta) , b_i(\theta)) $ to $(0,1)$.

We let $V_{(1)}  \leq \cdots \leq V_{(N)}$ be the order statistics of $V_1,\ldots,V_N$ and we define 
$U_{1}, \cdots, U_{N}$ 
with $U_j = G(V_j)$. We let
$U_{(1)}  \leq \cdots \leq U_{(N)}$, breaking ties in the same way as for  $V_{(1)}  \leq \cdots \leq V_{(N)}$. 
We let $\ell_N(t) \in \{1 , \ldots , N\}$ be the smallest integer larger or equal to $N t$. For convenience, we may write $\ell = \ell_N(t)$. 

If $\epsilon >0$,
we write $A_N$ for the event
\[
A_N 
= 
\{ 
V_{(\ell)} 
\in [ G^{-1}( \delta )  ,  G^{-1}( 1 - \delta ) ] 
\}
\]
and we let $\mathbf{1}_{A_N} $ be its indicator function and $A_N^c$ be its complement event.
If $\epsilon = 0$, we let $\mathbf{1}_{A_N}  = 1$ by convention. 
If $\epsilon >0$, note that $V_{(\ell)} \leq G^{-1}(\delta)$ implies that $G^{(N)}(G^{-1}(\delta)) \geq \ell/n$ and thus $G^{(N)}(G^{-1}(\delta)) \geq t \geq \epsilon$.
Hence, using Hoeffding inequality, one can see that there are deterministic constants $c_1$ and $c_2$ (not depending on $N,t,\theta$) with $c_2 >0$ such that 
$\mathbb{P}( V_{(\ell)} \leq G^{-1}(\delta) ) 	\leq 
c_1 e^{- c_2 N}$.
Reasoning similarly on the event $V_{(\ell)} \geq G^{-1}(1 - \delta)$, up to changing $c_1$ and $c_2$, we have
\begin{equation} \label{eq:after:hoeffding}
	\bP \left(
A_N^c	
	\right)
	\leq 
	c_1 e^{- c_2 N}.
\end{equation}
Of course, \eqref{eq:after:hoeffding} also holds for $\epsilon = 0$.
Then, writing $G^{(N)-1} = (G^{(N)})^{-1}$, 
\begin{align*}
\bE 
\left[ 
G^{(N)-1}( t )
-
G^{-1}(t)
\right] 
 = &
 \underbrace{
 \bE 
 \left[ 
 \mathbf{1}_{A_N^c} 
 G^{(N)-1}( t )
 -
  \mathbf{1}_{A_N^c} 
 G^{-1}(t)
 \right]
}_{= B_1} 
 \\
 & +
  \underbrace{
  \bE 
 \left[ 
 \mathbf{1}_{A_N} 
 G^{(N)-1}( t )
 -
 \mathbf{1}_{A_N} 
 G^{-1}(t)
 \right] 
}_{= B_2}.
\end{align*}
From \eqref{eq:after:hoeffding}, and because the values of $ G^{(N)-1}( t )$ and  $G^{-1}(t)$ are bounded by $\max \{ ||x|| ; x \in \Omega \}$, we obtain 
\begin{equation} \label{eq:bias:quantile:Bun}
|B_1| \leq 
\frac{c_3}{N},
\end{equation}
for a constant $c_3$. Next,
\[
B_2
= 
\bE 
\left[ 
\mathbf{1}_{A_N} 
V_{(\ell)}
-
\mathbf{1}_{A_N} 
G^{-1}(t)
\right]
=
\bE 
\left[ 
\mathbf{1}_{A_N} 
G^{-1} (U_{(\ell)})
-
\mathbf{1}_{A_N} 
G^{-1}(t)
\right].
\]
By a Taylor expansion, exploiting the fact that the event $A_N$ holds in the right-most expectation above, we obtain, with a random $\xi \in ( \delta,  1 - \delta )  $, writing $G^{-1'}$ and  $G^{-1''}$ for the first and second derivatives of  $G^{-1}$ on $( \delta,  1 - \delta )$,
\begin{align*}
B_2 
= &
\bE 
\left[ 
\mathbf{1}_{A_N} 
G^{-1'} (t )
\left( 
U_{(\ell)}
-
t
\right)
+
\frac{
\mathbf{1}_{A_N} 
}{
2
}
G^{-1''} (\xi )
\left( 
U_{(\ell)}
-
t
\right)^2
\right]
\\
= &
G^{-1'} (t )
\bE 
\left[ 
U_{(\ell)}
-
t
\right]
-
G^{-1'} (t )
\bE 
\left[ 
\mathbf{1}_{A_N^c} 
\left( 
U_{(\ell)}
-
t
\right)
\right]
+
\bE 
\left[
\frac{
	\mathbf{1}_{A_N} 
}{
	2
}
G^{-1''} (\xi )
\left( 
U_{(\ell)}
-
t
\right)^2
\right].
\end{align*}
Above, $U_{(\ell)}$ follows the $\cB(\ell,N+1-\ell)$ distribution, where $\cB$ stands for the Beta distribution. 
Hence, in the above display, the first expectation is of order $1/N$ since $| Nt - \ell  | \leq 1$.
The second expectation is of order  $1/N$  from the same arguments as before \eqref{eq:bias:quantile:Bun}. 
The quantities $G^{-1'} (t )$ and $G^{-1''} (\xi )$ are bounded by $c^{(2)}$ from Condition \ref{cond:sliced:wasserstein} because $t$ and $\xi$ are in $(\delta , 1- \delta )$. Hence, 
the third expectation above is of order $1/N$ also because  $U_{(\ell)}$ follows the $\cB(\ell,N+1-\ell)$ distribution. Thus, using \eqref{eq:bias:quantile:Bun}, we have
\begin{equation} \label{eq:sliced:bNi}
\left| 
\bE 
\left[ 
G^{(N)-1}( t )
-
G^{-1}(t)
\right] 
\right| 
\leq \frac{c_4}{N},
\end{equation}
for a constant $c_4$. 

Then for $\theta \in \cS^{d-1}$ and $t \in (\epsilon,1-\epsilon)$,
we let $F^{(N)-1}_{\mu_{i,\theta}} = ( F^{(N)}_{\mu_{i,\theta}} )^{-1}$, where $F^{(N)}_{\mu_{i,\theta}}$ is the empirical c.d.f. of $\theta^\top X_{i,1} , \ldots , \theta^\top X_{i,N}$, 
and we define  
\[
a_{N,i}(\theta,t)
=
F^{(N)-1}_{\mu_{i,\theta}}
(t)
-
\bE 
\left[
F^{(N)-1}_{\mu_{i,\theta}}
(t)
\right]
\]
and 
\[
b_{N,i}(\theta,t)
=
\bE 
\left[
F^{(N)-1}_{\mu_{i,\theta}}
(t)
\right]
-
F_{\mu_{i,\theta}}^{-1}(t).
\]
Hence $a_{N,i}(\theta,t) + b_{N,i}(\theta,t) = F^{(N)-1}_{\mu_{i,\theta}}
(t) - F_{\mu_{i,\theta}}^{-1}(t) = x_{N,i}(\theta,t) - x_i(\theta,t)$.
Also, \eqref{eq:general:cond:aNi:unbiased}  is simple to show.
Since $b_{N,i}$ is deterministic, \eqref{eq:sliced:bNi} implies \eqref{eq:general:cond:bNi}. It thus remains to prove \eqref{eq:general:cond:aNi}.

Let us fix $s \geq 1$.
We have, using Jensen's inequality twice,
\begin{align} \label{eq:sliced:Ani}
	\bE [ \|  a_{N,i} \|_{\HH}^s ]
= &
	\bE
	\left[ 
 \left(
	\frac{1}{1-2 \epsilon}
	\int_{\cS^{d-1}} 
	\int_{\epsilon}^{1- \epsilon}
	\left(
	F^{(N)-1}_{\mu_{i,\theta}}
	(t)
	-
	\bE 
	\left[
	F^{(N)-1}_{\mu_{i,\theta}}
	(t)
	\right]
	\right)^2
	\dd \Lambda (\theta)
	\dd t 	
	\right)^{s/2}
 \right]
	\notag
\\ 
& \leq 
\sqrt{
	\bE 
		\left[ 
  \left(
	\frac{1}{1-2 \epsilon}
	\int_{\cS^{d-1}} 
	\int_{\epsilon}^{1- \epsilon}
	\left(
	F^{(N)-1}_{\mu_{i,\theta}}
	(t)
	-
	\bE 
	\left[
	F^{(N)-1}_{\mu_{i,\theta}}
	(t)
	\right]
	\right)^2
	\dd \Lambda (\theta)
	\dd t 	
\right)^s
\right]
}
\notag
\\ 
& \leq 
\sqrt{
	\bE 
 \left[
	\frac{1}{1-2 \epsilon}
	\int_{\cS^{d-1}} 
	\int_{\epsilon}^{1- \epsilon}
	\left(
	F^{(N)-1}_{\mu_{i,\theta}}
	(t)
	-
	\bE 
	\left[
	F^{(N)-1}_{\mu_{i,\theta}}
	(t)
	\right]
	\right)^{2s}
	\dd \Lambda (\theta)
	\dd t 	
 \right]
}.
\end{align}
We now fix $t \in (\epsilon,1- \epsilon)$ and $\theta \in \cS^{d-1}$.
We use the same notation as above: $G$, $V_1,\ldots,V_N$, $U_1,\ldots,U_N$, $A_N$. We then study the above integrand
\begin{align*}
&
F^{(N)-1}_{\mu_{i,\theta}}
	(t)
	-
	\bE 
	\left[	F^{(N)-1}_{\mu_{i,\theta}}
	(t)
	\right] \\
= &	G^{(N)-1}
(t)
-
\bE 
\left[
G^{(N)-1}
(t)
\right]
\\
 = &
 V_{(\ell)}
 -
 \bE 
  V_{(\ell)} 
  \\
  = &
  G^{-1} (U_{(\ell)})
  -
  \bE 
  G^{-1} (U_{(\ell)}) 
  \\
  = &
  \underbrace{
  \mathrm{1}_{A_N }
    G^{-1} (U_{(\ell)})
    -
      \bE [
   \mathrm{1}_{A_N }   G^{-1} (U_{(\ell)})]
}_{C_2}
   +
     \underbrace{
     \mathrm{1}_{A_N^c}
    G^{-1} (U_{(\ell)})
    -
    \bE [
    \mathrm{1}_{A_N^c}   G^{-1} (U_{(\ell)})
    ]
}_{C_1}.
\end{align*}
From the same arguments as before \eqref{eq:bias:quantile:Bun},  we have 
\begin{equation} \label{eq:sliced:Cun}
\bE 
[
|C_1|^{2s}
]
\leq 
\frac{c_{s,5}}{N^s}, 
\end{equation}
for a constant $c_{s,5}$ (not depending on $N$, $\theta$, $t$). 

To study $C_2$, we use similarly as before that under the event $A_N$ there is a random $\xi \in (\delta,  1- \delta)$ such that 
\begin{align*}
C_2 
= &
\mathrm{1}_{A_N }
G^{-1} \left( \frac{\ell}{N+1} \right)
+ 
\mathrm{1}_{A_N }
G^{-1'} \left(\xi \right)
\left(
U_{(\ell)}
-
 \frac{\ell}{N+1} 
\right)
\\
& -
\bE 
\left[
\mathrm{1}_{A_N }
G^{-1} \left( \frac{\ell}{N+1} \right) 
\right]
- 
\bE 
\left[
\mathrm{1}_{A_N }
G^{-1'} \left( \xi \right)
\left(
U_{(\ell)}
-
\frac{\ell}{N+1} 
\right)
\right].
\end{align*}
Above, $|G^{-1'} \left( \xi \right) |$ is bounded by $c^{(2)}$ from Condition \ref{cond:sliced:wasserstein} since $A_N$ holds. Using \eqref{eq:after:hoeffding} as above, we can show, for a constant $c_{s,6}$,
\[
\bE [ |C_2|^{2s} ]
\leq 
c_{s,6} 
\bE 
\left[
\left| 
U_{(\ell)}
-
\frac{\ell}{N+1} 
\right|^{2s}
\right]
+
\frac{c_{s,6}}{N^s}. 
\]
We can finally use \citet[Thm. 3]{skorski2023bernstein}, together with a simple induction, to obtain
\[
\bE [ |C_2|^{2s} ]
\leq 
\frac{c_{s,7}}{N^s}
\]
for a constant $c_{s,7}$. Combined with \eqref{eq:sliced:Ani} and \eqref{eq:sliced:Cun}, we thus obtain that \eqref{eq:general:cond:aNi} holds for $s \geq 1$. From Jensen's inequality,  \eqref{eq:general:cond:aNi} also holds for $s \leq 1$ and the proof is concluded.

 \end{document}